\journal{TBA}
\newtheorem{theorem}{Theorem}
\newtheorem{proposition}{Proposition}
\newtheorem{lemma}{Lemma}
\newtheorem{corollary}{Corollary}
\newtheorem{definition}{Definition}
\newtheorem{remark}{Remark}
\DeclareMathOperator{\tr}{tr}
\DeclareMathOperator{\spt}{spt}
\begin{document}
	
\begin{frontmatter}
	
\title{Local null controllability of a class of non-Newtonian incompressible viscous fluids}
	
\author[UESPI]{P. Carvalho}
\ead{pitagorascarvalho@gmail.com}

\author[IMEUFF]{J. L\'imaco}
\ead{jlimaco@id.uff.br}

\author[IMEUFF]{D. Menezes}
\ead{denilsonjesus@id.uff.br}

\author[IMEUFF]{Y. Thamsten}
\ead{ythamsten@id.uff.br}

\address[UESPI]{Departamento de Matem\'atica, Universidade Estadual do Piau\'i, Teresina, PI, Brasil}

\address[IMEUFF]{Instituto de Matem\'atica e Estat\'istica, Universidade Federal Fluminense, Niter\'{o}i, RJ, Brasil}

\date{}
	
	
\begin{abstract}
    We investigate the null controllability property of systems that mathematically describe the dynamics of some non-Newtonian incompressible viscous flows. The principal model we study was proposed by O. A. Ladyzhenskaya, although the techniques we develop here apply to other fluids having a shear-dependent viscosity. Taking advantage of the Pontryagin Minimum Principle, we utilize a bootstrapping argument to prove that sufficiently smooth controls to the forced linearized Stokes problem exist, as long as the initial data in turn has enough regularity. From there, we extend the result to the nonlinear problem. As a byproduct, we devise a quasi-Newton algorithm to compute the states and a control, which we prove to converge in an appropriate sense. We finish the work with some numerical experiments.
\end{abstract}	

\begin{keyword}
Null controllability, shear dependent viscosity, nonlinear partial differential equations, non-Newtonian fluids. 
\MSC[2010] 35K55, 76D55, 93B05, 93C10.
\end{keyword}

\end{frontmatter}	

\section{Introduction} \label{intro}

Let us fix an integer $N \in \{ 2,3 \},$ and let us take a non-empty, open, connected, and bounded subset $\Omega$ of $\mathbb{R}^N$ with a smooth boundary $\partial \Omega,$ and a real number $T>0.$ Henceforth, we write $Q:= \left]0,T\right[\times \Omega,$ and $\Sigma := \left[0,T\right]\times \partial \Omega.$ In general, we understand all of the derivatives figuring in this work in the distributional sense. 

We interpret the set $\Omega$ as a region occupied by the particles of a fluid with a velocity field $y.$ We represent its pressure by $p,$ whereas  $v$ stands for a distributed control which acts as a forcing term through a given open set $\omega \Subset \Omega.$ We assume $\omega \neq \emptyset.$ The model comprising the subject of the current investigation is the following:
\begin{equation} \label{Model}
\begin{array}{l}
\begin{cases}
\frac{D y}{Dt} - \nabla \cdot \mathcal{T}(y,p) = \chi_\omega v, \text{ in } Q, \\
\nabla \cdot y = 0, \text{ in } Q, \\
y = 0, \text{ on } \Sigma, \\
y(0) = y_0, \text{ in } \Omega.
\end{cases}
\end{array}
\end{equation}
Above, the function $\chi_\omega$ denotes the indicator function of $\omega,$ we define the material derivative as
\begin{equation} \label{MaterialDerivative}
    \frac{Dy}{Dt} := y_t + \left( y\cdot \nabla \right) y,
\end{equation} 
the stress tensor, $\mathcal{T},$ is given by
\begin{equation} \label{StressTensor}
    \mathcal{T}(y,p) := -p I + \nu(\nabla y) \nabla y,\, \nu(\nabla y) := \nu_0 + \nu_1 |\nabla y|^{r} ,
\end{equation}
in such a way that the constitutive law for the deviatoric stress tensor reads as
\begin{equation} \label{ConstitutiveLaw}
    \nu(\nabla y)\nabla y := \left( \nu_0 + \nu_1 |\nabla y|^{r} \right) \nabla y,
\end{equation}
where
\begin{equation*}
    |\nabla y| := \left[ \sum_{i,j=1}^N \left( \partial_j y_i\right)^2 \right]^{1/2}.
\end{equation*}
We remark that the three constants $\nu_0, \nu_1,$ and $r$ appearing above are strictly positive, typically with $\nu_0 \gg \nu_1,$ although this assumption is not necessary in this work. 

Therefore, we are focusing on the class of power-law shear-dependent fluids. Pioneers in the study of the system \eqref{Model}-\eqref{ConstitutiveLaw} were O. A. Ladyzhenskaya and J.-L. Lions, see \cite{ladyzhenskaya1969mathematical,ladyzhenskaya1970new,ladyzhenskaya1970modification,lions1969quelques}. Particularly, let us introduce the usual spaces we use in the mathematical analysis of fluid dynamics, i.e.,
$$
H := \left\{ y \in L^2(\Omega)^N : \nabla \cdot y = 0 \text{ in } \Omega,\ y\cdot n = 0 \text{ on } \partial \Omega  \right\}
$$
and
$$
V := \left\{y \in H^1_0(\Omega)^N : \nabla \cdot y = 0 \text{ in } \Omega \right\},
$$
where $n$ denotes the outward unit normal on $\partial \Omega.$ Then, the results \cite[Chapitre 2, Th\`eoremes 5.1-5.2]{lions1969quelques} (cf. \cite[Chapitre 2, Remarque 5.4]{lions1969quelques}) imply the following:
\begin{theorem} \label{thm:wellPosednessNonlinear} 
Let us suppose that 
$$
r > \frac{N}{2} - 1.
$$
as well as
$$
y_0 \in H \text{ and } \chi_\omega v \in L^{q^\prime}\left(0,T; V^\prime \right),
$$
where
$$
\frac{1}{q} + \frac{1}{q^\prime} = 1, \text{ for } q := r+2.
$$
Then, the problem \eqref{Model}-\eqref{ConstitutiveLaw} admits a unique solution $(y,p)$ such that
$$
y \in L^{r+2}(0,T;V) \cap L^\infty(0,T;H) \text{ and } p \in L^2(Q).
$$
\end{theorem}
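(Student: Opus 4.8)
The statement is classical, and the plan is to reproduce, in the present setting, the Faedo--Galerkin argument combined with the theory of monotone operators developed in \cite{lions1969quelques}. First I would eliminate the pressure by working directly in the space $V$. Setting
$$
\langle \mathcal{A}(u), z \rangle := \int_\Omega \nu(\nabla u)\,\nabla u : \nabla z \, dx \quad \text{and} \quad b(u,w,z) := \int_\Omega (u\cdot\nabla) w \cdot z \, dx,
$$
a pair $(y,p)$ solving \eqref{Model}--\eqref{ConstitutiveLaw} corresponds to a function $y \in L^{r+2}(0,T;V) \cap L^\infty(0,T;H)$ with $y(0) = y_0$ and
$$
\langle y_t, z \rangle + b(y,y,z) + \langle \mathcal{A}(y), z \rangle = \langle \chi_\omega v, z \rangle \qquad \text{for all } z \in V \text{ and a.e. } t \in (0,T),
$$
the pressure being recovered a posteriori from De Rham's theorem, since the difference of the two sides, regarded as a functional on $H^1_0(\Omega)^N$, annihilates $V$ and is therefore a gradient; the claimed integrability of $p$ then follows from that of the remaining terms in the momentum equation.

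For existence I would fix a Galerkin basis $\{ w_j \}_{j\geq 1}$ of $V$ (for instance the eigenfunctions of the Stokes operator), construct approximate solutions $y_m = \sum_{j=1}^m g_{jm}(t)\,w_j$ solving the associated ODE system, and test with $y_m$. The antisymmetry $b(y_m,y_m,y_m)=0$ together with the coercivity $\nu(\xi)|\xi|^2 \geq \nu_0|\xi|^2 + \nu_1|\xi|^{r+2}$ and Young's inequality applied to $\langle \chi_\omega v, y_m\rangle$ with the conjugate exponents $(q,q')$ give the uniform bound
$$
\sup_{t\in[0,T]} \| y_m(t) \|_H^2 + \int_0^T \| y_m(t) \|_V^{r+2}\,dt \leq C\big( \|y_0\|_H, \| \chi_\omega v \|_{L^{q'}(0,T;V')} \big).
$$
Since $\mathcal{A}$ maps bounded subsets of $L^{r+2}(0,T;V)$ into bounded subsets of $L^{q'}(0,T;V')$, and since $r > N/2-1$ is precisely the condition under which a Ladyzhenskaya-type inequality, interpolating $L^{r+2}(0,T;V)$ against $L^\infty(0,T;H)$, keeps $(y_m\cdot\nabla)y_m$ bounded in $L^{q'}(0,T;V')$, I would then deduce a uniform bound on $\partial_t y_m$ in $L^{q'}(0,T;V')$.

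It remains to pass to the limit. Along a subsequence, $y_m \rightharpoonup y$ weakly-$\ast$ in $L^\infty(0,T;H)$ and weakly in $L^{r+2}(0,T;V)$, $\partial_t y_m \rightharpoonup y_t$ weakly in $L^{q'}(0,T;V')$, and $\mathcal{A}(y_m) \rightharpoonup \Xi$ weakly in $L^{q'}(0,T;V')$; by the Aubin--Lions--Simon lemma, $y_m \to y$ strongly in $L^2(Q)^N$, which suffices to pass to the limit in the trilinear term against smooth test functions. The crux of the argument is the identification $\Xi = \mathcal{A}(y)$: because $\mathcal{A}$ is nonlinear, this does not follow from weak convergence, and I would use Minty's monotonicity trick, exploiting that $\xi \mapsto (\nu_0 + \nu_1|\xi|^r)\xi$ is monotone on $\mathbb{R}^{N\times N}$ and that $\mathcal{A}$ is hemicontinuous, together with the energy inequality $\limsup_m \int_0^T \langle \mathcal{A}(y_m), y_m\rangle\,dt \leq \int_0^T \langle \Xi, y\rangle\,dt$ obtained by passing to the limit in the Galerkin energy equality. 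Uniqueness of $(y,p)$ follows by testing the equation for the difference of two solutions against itself, absorbing the convective contribution through the same interpolation inequality (again using $r > N/2-1$) and the monotonicity of $\mathcal{A}$, and concluding with Gronwall's inequality. I expect the main obstacle to be exactly this limit passage in the monotone nonlinearity, which forces careful bookkeeping of the energy equality rather than a naive weak-limit argument.
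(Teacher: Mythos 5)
Your proposal is correct and follows essentially the same route as the paper, which does not prove the theorem directly but cites Lions' monograph \cite[Chapitre 2, Th\'eor\`emes 5.1--5.2 et Remarque 5.4]{lions1969quelques}; the Faedo--Galerkin construction, the uniform energy bound using the coercivity $\nu(\xi)|\xi|^2\geqslant \nu_0|\xi|^2+\nu_1|\xi|^{r+2}$, the Aubin--Lions compactness, Minty's monotonicity trick to identify the weak limit of $\mathcal{A}(y_m)$, De Rham's theorem to recover the pressure, and the use of $r>N/2-1$ to control the convective term in the uniqueness argument are precisely the ingredients of that reference. No substantive gap: this is a faithful reconstruction of the cited argument.
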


For $r=1$ and $N=3,$ the system \eqref{Model}-\eqref{ConstitutiveLaw} is the simple turbulence model of Smagorinsky, see \cite{smagorinsky1963general}. Since then, gradient-dependent (or shear-dependent) viscosity models of incompressible viscous fluids have attracted considerable attention from the mathematical, physical, and engineering communities. Some other works investigating the well-posedness for the model \eqref{Model}-\eqref{ConstitutiveLaw} under consideration are \cite{pokorny1996cauchy,du1991analysis,malek1993non,malek1995existence,malek2001weak}. The paper \cite{layton2016energy} studies the energy dissipation for the Smagorinsky model. For the investigation of some regularity properties of solutions of \eqref{Model}-\eqref{ConstitutiveLaw}, see \cite{beirao2005regularity} and the references therein.

On the one hand, the Navier-Stokes (NS) system of equations (corresponding to formally replacing $\nu_1 = 0$ in (\ref{ConstitutiveLaw})) is deeply relevant, not only in mathematics, but for physics, engineering, and biology, see \cite{tartar2006introduction,ruzicka2000electrorheological}. For standard well-posedness results, which are now classic, see \cite{temam1978navier,boyer2012mathematical}. However, even with a great effort of researchers, among the main longstanding open problems are the questions about global existence or finite-time blow-up of smooth solutions in dimension three of the incompressible Navier-Stokes (or else the Euler) equations. The system \eqref{Model}-\eqref{ConstitutiveLaw} is a generalization of the Navier-Stokes equations. From a practical perspective, as \cite{du1991analysis} points out, every fluid which solutions of NS decently models is at least as accurately described by those of \eqref{Model}-\eqref{ConstitutiveLaw}.

On the other hand, for real-world problems, the advantage of considering the more general fluids of power-law type is not slight. In effect, as \cite{malek1995existence} describes, practitioners employed them to investigate problems in chemical engineering of colloids, suspensions, and polymeric fluids, see \cite{sutterby1965laminar,sutterby1966laminar,turian1969critical,carreau1968rheological,christiansen1973isothermal,powell1944mechanisms,ree1958relaxation}, in ice mechanics and glaciology, see \cite{metzner1956non,van1990new,kjartanson1988creep}, in blood-rheology, see \cite{cho1991effects,cho1989effects,cokelet1963rheology,cross1965rheology,davies1990effects,nakamura1988numerical,steffan1990comparison,el2016effects}, and also in geology, see \cite{malevsky1992strongly}, to name a few instances.

We briefly describe the physical meanings of the constants $\nu_0,\,\nu_1,$ and $r.$ Firstly, $\nu_0$ stands for the kinematic viscosity of the fluid. If the physical variables are nondimensionalized, then $\nu_0^{-1}$ is the Reynolds number of the fluid. Secondly, we can conceive the constants $\nu_1$ and $r$ in light of the kinetic theory of gases and the definition of a Stokesian fluid, see \cite{ladyzhenskaya1970modification,ladyzhenskaya1970new}. For instance, from the point of view of turbulence modeling, we have $\nu_1 = C_0\ell^2,$ where $C_0$ is a model parameter and $\ell \ll 1$ is a mixing length, see \cite{prandtl1952guide}. In the latter perspective, a possible derivation of the model stands on the Boussinesq assumption for the Reynolds stress, further stipulating that the eddy viscosity $\nu_t$ takes the particular form
\begin{equation} \label{eddy}
    \nu_t = \nu_1 |\nabla y|^r,
\end{equation}
see \cite{lesieur1987turbulence,rebollo2014mathematical}. The term $\nu_t$ given by (\ref{eddy}) leads to a stabilizing effect by increasing the viscosity for a corresponding increase in the velocity field gradient, see the discussion in \cite{du1991analysis}; hence, we call these fluids shear-thickening.

From the viewpoint of control theory, \cite{fernandez2004local} establishes the local null controllability for the Navier-Stokes equations under no-slip boundary conditions; later developments worth mentioning are, e.g, \cite{fernandez2006some,coron2009null,carreno2013local,coron2014local}. For the study of the Ladyzhenskaya-Smagorinsky model, see \cite{fernandez2015theoretical}. The paper \cite{micu2018local} deals with a similar one-dimensional problem. Regarding local exact controllability properties for scalar equations having a locally nonlinear diffusion, some advances are \cite{fernandez2017theoretical,chaves2015uniform,liu2012local}. However, although the diffusion coefficients can be functions of the state (in the case of \cite{chaves2015uniform} in a simplified form), the methods used in these works seem not enough to tackle the situation in which these coefficients depend on the gradient of the controlled solution. Furthermore, the assumptions they make rule out more general diffusions with power-law type nonlinearities. In the present work, we can circumvent all of these difficulties. 

The notion of controllability we consider in this paper is defined as follows.
\begin{definition} \label{DefnLocNC}
We say that \eqref{Model}-\eqref{ConstitutiveLaw} is locally null-controllable at time $T>0$ if there exists $\eta>0$ such that, for each $y_0 \in \left[H^5(\Omega)\cap V\right]^N$ satisfying the compatibility conditions $Ay_0,A^2y_0 \in \left[H^1_0(\Omega)\right]^N,$ as well as
$$
\|y_0\|_{H^5(\Omega)^N} < \eta,
$$
we can find $v \in L^2(\left]0,T\right[\times \omega)^N$ for which the corresponding velocity field $y$ of \eqref{Model}-\eqref{ConstitutiveLaw} satisfies
\begin{equation} \label{eq:ControlCondn}
    y(T,x) = 0 \text{ for almost every } x \in \Omega.
\end{equation}
\end{definition}

We now state the main theoretical result we establish in this paper. 
\begin{theorem} \label{MainThm}
Let us suppose $r \in \{1,2\}$ or $r \geqslant 3.$ For each $T>0,$ the system \eqref{Model}-\eqref{ConstitutiveLaw} is locally null-controllable at time $T.$ 
\end{theorem}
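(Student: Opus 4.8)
The plan is to follow the by-now-standard strategy for local null controllability of nonlinear parabolic systems: first establish null controllability of a suitable linearization with controls enjoying extra regularity, then close a fixed-point (or quasi-Newton) argument to pass to the full quasilinear problem \eqref{Model}-\eqref{ConstitutiveLaw}. Concretely, I would linearize around the zero trajectory, which replaces the deviatoric stress $\nu(\nabla y)\nabla y$ by the constant-viscosity term $\nu_0 \nabla y$ and drops the convective term $(y\cdot\nabla)y$, leaving the forced Stokes system $y_t - \nu_0 \Delta y + \nabla p = \chi_\omega v + f$, $\nabla\cdot y = 0$, with $y(0)=y_0$. The nonlinear remainder $f := \nabla\cdot\bigl(\nu_1|\nabla y|^r\nabla y\bigr) - (y\cdot\nabla)y$ must then be treated as a source term that is quadratically (or higher-order, depending on $r$) small in the solution; the restriction $r\in\{1,2\}$ or $r\geqslant 3$ is exactly what is needed for this nonlinearity to be differentiable enough (and for $|\nabla y|^r$ to make sense as a smooth function of the gradient on the relevant Sobolev scale) so that the Nemytskii map it induces is $C^1$ between the function spaces we use.

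The first main step is a Carleman-estimate-based null controllability result for the adjoint Stokes system, following \cite{fernandez2004local}, which yields a control $v\in L^2(]0,T[\times\omega)^N$ driving $y$ to rest at time $T$; the novelty here, as the abstract signals, is to obtain controls with additional smoothness. For this I would invoke the Pontryagin Minimum Principle / optimal-control characterization: among all admissible $(v,y)$, minimize a cost functional penalizing $\|v\|$ in a weighted space with weights blowing up at $t=T$, so that the optimal state is forced to vanish at $T$ together with derivatives; the optimality system couples the state to an adjoint variable, and a bootstrap on this coupled system — differentiating in time, using maximal parabolic regularity for Stokes, and the compatibility conditions $Ay_0, A^2 y_0\in [H^1_0(\Omega)]^N$ imposed in Definition~\ref{DefnLocNC} — upgrades the regularity of $y$ (and hence of $v$) step by step. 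The target regularity must be high enough that $y$ lies in a space, say $y\in L^\infty(0,T;[H^5(\Omega)\cap V]^N)$ with appropriate time regularity, on which the nonlinear map $F(y) := \nabla\cdot(\nu_1|\nabla y|^r\nabla y) - (y\cdot\nabla)y$ is well-defined, continuous, and small of superlinear order; the choice $H^5$ in Definition~\ref{DefnLocNC} is presumably dictated precisely by the number of bootstrap steps / the highest derivative the quasi-Newton scheme needs to control.

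The second step is the nonlinear closure. Having a bounded linear(ized) control operator $y_0 \mapsto (v,y)$ on the high-regularity space, I would set up either a Banach fixed-point argument or, as the authors announce, a quasi-Newton iteration: given a current iterate $y^{(k)}$, solve the linearized controlled Stokes problem with source $F(y^{(k)})$ (or its linearization) to get $y^{(k+1)}$, and show the map is a contraction on a small ball $\{\|y\|\leqslant C\eta\}$ provided $\|y_0\|_{H^5(\Omega)^N}<\eta$ with $\eta$ small. The estimates needed are: (i) the linear control estimate $\|y^{(k+1)}\|\leqslant C\bigl(\|y_0\| + \|F(y^{(k)})\|\bigr)$ in the chosen norms; (ii) the superlinear bound $\|F(y)\|\leqslant C\|y\|^{1+\delta}$ with $\delta>0$ depending on $r$; and (iii) the Lipschitz bound $\|F(y)-F(\tilde y)\|\leqslant C(\|y\|+\|\tilde y\|)^{\delta}\|y-\tilde y\|$. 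Convergence of the iteration then gives a solution $(y,p,v)$ of \eqref{Model}-\eqref{ConstitutiveLaw} with $y(T)=0$, which is the claim; moreover Theorem~\ref{thm:wellPosednessNonlinear} guarantees this is the unique solution in the energy class.

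I expect the main obstacle to be the bootstrap/regularity step, i.e., producing a \emph{smooth} control rather than merely an $L^2$ one. Plain Carleman-based arguments give a control in $L^2(]0,T[\times\omega)$ and a state with limited regularity near $t=T$ (the Carleman weights degenerate there), so one must carefully design the weights and the penalized functional so that the optimal control inherits vanishing and regularity at $t=T$, and then iterate parabolic regularity for the Stokes operator $A$ — each step requiring a compatibility condition on the data, which is why $Ay_0$ and $A^2y_0$ must lie in $[H^1_0(\Omega)]^N$. A secondary difficulty is verifying the superlinear and Lipschitz estimates on $F$ uniformly across the admissible values of $r$: for $r\in\{1,2\}$ the nonlinearity $|\nabla y|^r\nabla y$ is polynomial and easy, but for general $r\geqslant 3$ one needs $|\nabla y|^r$ to be $C^1$ in $\nabla y$ and to control the resulting products in high Sobolev norms, which forces the stated lower bounds on $r$ and the exclusion of $r\in(0,1)\cup(2,3)$ where the map fails to be smooth enough at zero shear.
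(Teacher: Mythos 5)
Your proposal follows essentially the same route as the paper: linearize around zero to get a forced Stokes system, obtain a Carleman/observability estimate and use the Pontryagin-type characterization $v=-\chi\rho_4^{-2}\varphi$ to bootstrap control and state regularity under the compatibility conditions on $y_0$, and then close the nonlinear problem by a quasi-Newton/local right-inversion argument (the paper's Theorem \ref{Liusternik}) between weighted Banach spaces $Y$ and $Z$. Your explanation of why the restriction $r\in\{1,2\}\cup[3,\infty)$ is needed — namely $C^1$ smoothness of the Nemytskii map induced by $\nabla\cdot(|\nabla y|^r\nabla y)$ on the high-order Sobolev scale, with the excluded ranges producing negative powers of $|\nabla y|$ in the differentiated expressions — is exactly the mechanism at work in Lemmas \ref{HIsWellDefined} and \ref{HIsC1}.
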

\begin{remark}
Although we stated Theorem \ref{thm:wellPosednessNonlinear} in terms of weak solutions, our methodology yields smooth controls and transient trajectories for the nonlinear system \eqref{Model}-\eqref{ConstitutiveLaw}. Namely, we will be able to prove that there is a control parameter $v$ such that 
$$
\rho_4 v,\,(\zeta v)_t,\, \zeta \Delta v,\, \left( \widehat{\zeta} v_t \right)_t,\, \widehat{\zeta}\Delta v_t,\,\widehat{\zeta} D^4 v \in L^2(Q)^N,
$$
with a corresponding trajectory $y$ satisfying
\scriptsize
$$
\begin{cases}
\rho_6\nabla y,\, \rho_7  y_t,\,\rho_7 \Delta y,\,\rho_8\nabla y_t,\,\rho_9y_{tt},\,\rho_9\Delta y_t,\,\rho_{10}\nabla y_{tt},\,\rho_{10}D^3 y_t,\,\rho_9 D^4 y,\, \rho_{11}y_{ttt},\,\rho_{11}\Delta y_{tt} \in L^2(Q)^N,\\
\rho_6 y,\, \rho_7 \nabla y,\, \rho_8y_t,\,\rho_9\Delta y,\,\rho_9 \nabla y_t,\,\rho_9 D^3 y,\,\rho_{10}y_{tt},\,\rho_{10}\Delta y_t,\, \rho_{11}\nabla y_{tt} \in L^\infty(0,T; L^2(\Omega)^N),
\end{cases}
$$
\normalsize
for appropriate time-dependent positive weights $\rho_4,$ $\rho_6,$ $\rho_7,$ $\rho_8,$ $\rho_9,$ $\rho_{10},$ $\rho_{11},$ $\zeta,$ $\widehat{\zeta}$ which blow up exponentially as $t\uparrow T.$ For more details and the proofs, we refer to Sections \ref{sec2} and \ref{sec3}. Of course, there is a trade-off between such regularity and our requirements on the initial datum. We will comment upon questions that are related to this relation on Section \ref{Comments}. 
\end{remark}

We will prove Theorem \ref{MainThm} with the aid of a local inversion-to-the-right theorem. Namely, we will introduce Banach spaces $Y$ and $Z$ (we provide the details in the second subsection of Section \ref{sec3}) as well as a mapping $H: Y \rightarrow Z,$ such that a solution $(y,p,v)$ of the equation
\begin{equation} \label{Inverse}
    H(y,p,v) = (0,y_0),
\end{equation}
for a given initial data $y_0$ meeting the assumptions of Theorem \ref{MainThm}, is a solution of the control problem, i.e., a tuple subject to \eqref{Model}-\eqref{ConstitutiveLaw} and \eqref{eq:ControlCondn}. We will use the inversion theorem to guarantee the existence of a local right inverse of $H.$ For proving that $H$ is well-defined, as well as that it enjoys suitable regularity properties, the key steps are novel high-order weighted energy estimates for a control and the solution of the linearization of the system \eqref{Model}-\eqref{ConstitutiveLaw} around the zero trajectory. 

Taking advantage of the invertibility properties of $DH(0,0,0),$ we construct the following algorithm allowing the computation of a tuple $(y,p,v)$ solving \eqref{Model}-\eqref{ConstitutiveLaw} and \eqref{eq:ControlCondn}.
\begin{algorithm}[!htp]
\SetAlgoLined
\KwResult{The tuple $(y,p,v)$ solving \eqref{Model}-\eqref{ConstitutiveLaw} and \eqref{eq:ControlCondn}.}
Initialize with the error variable $\epsilon,$ the tolerance $\epsilon_0,$ $n=0,$ and an initial guess $(y^0,p^0,v^0).$\\
\While{$\epsilon > \epsilon_0$}{
We let $f$ be the solution of
$$
DH(0,0,0)\cdot f = H(y^n,p^n,v^n) - (0,y_0);
$$\\
We set $(y^{n+1},p^{n+1},v^{n+1}) \gets (y^n,p^n,v^n) - f;$\\
We update $\epsilon,$ say, $\epsilon \gets \|y^{n+1} - y^n \|_{L^2(Q)};$\\
We update $n\gets n+1;$
}
\Return{$(y^n,p^n,v^n)$}
\caption{A quasi-Newton algorithm}
\end{algorithm}
\FloatBarrier
 
The following local convergence result for Algorithm 1 holds.
\begin{theorem} \label{NumThm}
There exist a small enough constant $\eta > 0,$ as well as appropriate Banach spaces $Y$ and $Z,$\footnote{We provide, in the second subsection of Section \ref{sec3}, the explicit definitions of both $Y$ and $Z.$} such that, if $\|y_0\|_{H^5(\Omega)^N} < \eta,$ with $y_0$ satisfying the compatibility conditions of Definition \ref{DefnLocNC}, then it is possible to find $\kappa \in \left]0,1\right[$ with the following property: the relations $(y^0,p^0,v^0) \in Y$ and
$$
\|(y^0,p^0,v^0)-(y,p,v)\|_Y < \kappa,
$$
imply the existence of $\theta \in \left]0,1\right[$ for which
$$
\|(y^{n+1},p^{n+1},v^{n+1}) - (y,p,v)\|_Y \leqslant \theta \|(y^n,p^n,v^n)-(y,p,v)\|_Y,
$$
for all $n\geqslant 0.$ In particular, $(y^n,p^n,v^n) \rightarrow (y,p,v)$ in $Y.$
\end{theorem}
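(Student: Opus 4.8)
The plan is to deduce Theorem \ref{NumThm} from the same local inversion-to-the-right framework used to prove Theorem \ref{MainThm}, but upgrading the qualitative invertibility of $DH(0,0,0)$ to a quantitative Newton-Kantorovich type estimate. First I would recall the ingredients already established in Section \ref{sec3}: the spaces $Y$ and $Z$, the map $H:Y\to Z$ with $H(y,p,v)=(0,y_0)$ encoding \eqref{Model}-\eqref{ConstitutiveLaw} together with \eqref{eq:ControlCondn}, the fact that $H$ is of class $C^1$ (indeed the nonlinearity is a polynomial-type Nemytskii operator in the weighted spaces, so one expects $C^1$ with locally Lipschitz derivative on the relevant balls), and that $DH(0,0,0):Y\to Z$ is onto with a bounded linear right inverse $G$, $\|G\|_{\mathcal{L}(Z,Y)}\leqslant C$. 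The solution $(y,p,v)$ of the control problem produced in the proof of Theorem \ref{MainThm} lies in $Y$ with norm controlled by $\|y_0\|_{H^5(\Omega)^N}$; shrinking $\eta$ we may assume $\|(y,p,v)\|_Y$ is as small as needed, in particular small enough that $DH$ is invertible-to-the-right in a whole $Y$-ball around $(y,p,v)$, not merely at the origin.

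Next I would analyze one step of Algorithm 1. Writing $w^n:=(y^n,p^n,v^n)$ and $w^*:=(y,p,v)$, the update is $w^{n+1}=w^n-f^n$ where $DH(0,0,0)f^n=H(w^n)-(0,y_0)=H(w^n)-H(w^*)$. Choosing $f^n:=G\big(H(w^n)-H(w^*)\big)$ (any bounded right inverse works for the convergence bound), one computes
\begin{equation*}
w^{n+1}-w^* = (w^n-w^*) - G\big(H(w^n)-H(w^*)\big) = G\Big( DH(0,0,0)(w^n-w^*) - \big(H(w^n)-H(w^*)\big)\Big).
\end{equation*}
By the fundamental theorem of calculus in $Y$,
\begin{equation*}
H(w^n)-H(w^*) = \int_0^1 DH\big(w^*+s(w^n-w^*)\big)(w^n-w^*)\,ds,
\end{equation*}
so that
\begin{equation*}
\|w^{n+1}-w^*\|_Y \leqslant \|G\|_{\mathcal{L}(Z,Y)} \left( \int_0^1 \big\| DH(0,0,0) - DH\big(w^*+s(w^n-w^*)\big) \big\|_{\mathcal{L}(Y,Z)}\,ds \right) \|w^n-w^*\|_Y.
\end{equation*}
Using the local Lipschitz continuity of $DH$ near the origin, $\|DH(0,0,0)-DH(u)\|_{\mathcal{L}(Y,Z)}\leqslant L\|u\|_Y$, and $\|w^*+s(w^n-w^*)\|_Y\leqslant \|w^*\|_Y+\|w^n-w^*\|_Y$, the bracket is at most $L(\|w^*\|_Y+\|w^n-w^*\|_Y)$. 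Hence if we first fix $\eta$ so small that $C L\|w^*\|_Y\leqslant \tfrac14$, and then fix $\kappa\in\left]0,1\right[$ so small that $C L\kappa\leqslant\tfrac14$, the initial hypothesis $\|w^0-w^*\|_Y<\kappa$ gives $\|w^1-w^*\|_Y\leqslant\tfrac12\|w^0-w^*\|_Y<\kappa$, and by induction $\|w^{n+1}-w^*\|_Y\leqslant\theta\|w^n-w^*\|_Y$ with $\theta:=\tfrac12$ (any $\theta\in\left]0,1\right[$ obtained this way works), for every $n\geqslant0$. The induction also keeps all iterates inside the $\kappa$-ball where the Lipschitz estimate for $DH$ is valid, which is what makes the argument close. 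Geometric contraction then yields $w^n\to w^*$ in $Y$, and in particular the stopping criterion $\epsilon=\|y^{n+1}-y^n\|_{L^2(Q)}\to0$ since $L^2(Q)$-norms of the velocity components are dominated by the $Y$-norm.

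The main obstacle I anticipate is not the abstract Newton iteration, which is standard, but verifying rigorously that $H$ is genuinely $C^1$ with a locally Lipschitz derivative between the specific weighted spaces $Y$ and $Z$ constructed in Section \ref{sec3}; the weights $\rho_4,\rho_6,\dots,\rho_{11},\zeta,\widehat{\zeta}$ blow up as $t\uparrow T$, so one must check that differentiating the $|\nabla y|^r\nabla y$ term and its time/space derivatives produces only products that are still controlled in $Z$ by the norms appearing in $Y$ — this is exactly where the restriction $r\in\{1,2\}$ or $r\geqslant3$ enters, guaranteeing enough smoothness of the map $\xi\mapsto|\xi|^r\xi$ to take the required number of Fréchet derivatives. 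Granting the $C^1$, locally-Lipschitz-derivative property of $H$ (which is implicit in the proof of Theorem \ref{MainThm} via the inversion-to-the-right theorem), and the bounded right invertibility of $DH(0,0,0)$, the remaining steps are the routine contraction estimate above together with bookkeeping of how small $\eta$ and $\kappa$ must be chosen relative to the constants $C$ and $L$.
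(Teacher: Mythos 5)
Your proposal is correct and follows essentially the same Newton--Kantorovich contraction argument as the paper, resting on the two facts already established in Section \ref{sec3}: strict differentiability of $H$ (Lemma \ref{HIsC1}) and bounded bijectivity of $DH(0,0,0)$ (Lemma \ref{BijectiveDHAtOrigin}). Two technical points deserve flagging. First, where the paper splits the recursion error into two pieces, $e^{n+1}=-DH(0,0,0)^{-1}\bigl[H(w^n)-H(w^*)-DH(w^*)e^n\bigr]-DH(0,0,0)^{-1}\bigl[DH(w^*)-DH(0,0,0)\bigr]e^n$, and bounds each directly from the $\epsilon$--$\delta$ definition of differentiability and the continuity of $DH$ near the origin, you collapse them into a single mean-value integral and invoke a local \emph{Lipschitz} modulus $\|DH(0,0,0)-DH(u)\|_{\mathcal{L}(Y,Z)}\leqslant L\|u\|_Y$; Lemma \ref{HIsC1} explicitly asserts only that $H\in C^1(Y,Z)$, so you either need to prove this Lipschitz bound (plausible, given the $\delta^r$-type estimates in the lemma's proof, but not stated) or downgrade to plain continuity, which is what the paper does and which still closes the contraction with a different bookkeeping of $\eta$ and $\kappa$. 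Second, the aside that ``any bounded right inverse works'' is misleading: the identity $e^n-G\bigl(H(w^n)-H(w^*)\bigr)=G\bigl(DH(0,0,0)e^n-(H(w^n)-H(w^*))\bigr)$ uses $G\,DH(0,0,0)=I_Y$, i.e.\ that $G$ is also a \emph{left} inverse. That this holds is a consequence of the coercivity estimate $\|w\|_Y\leqslant M\|DH(0,0,0)w\|_Z$ from Lemma \ref{BijectiveDHAtOrigin}, which forces $DH(0,0,0)$ to be injective, hence bijective with $G=DH(0,0,0)^{-1}$; it is also what makes the update $f^n$ in Algorithm 1 well-defined as a unique solution rather than a choice.
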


Here, we fix some notations that we will use throughout the whole paper. Firstly, $C$ denotes a generic positive constant that may change from line to line within a sequence of estimates. In general, $C$ depends on $\Omega,$ $\omega,$ $T,$ $\nu_0,$ $\nu_1,$ and $r.$ In case $C$ begins to depend on some additional quantity $a$ (or we want to emphasize some dependence), we write $C=C(a).$ We will also write, for every integer $k\geqslant 0,$
$$
\left|D^k y\right| := \left[\sum_{i=1}^N \sum_{|\alpha|=k} \left(\partial^\alpha y_i\right)^2 \right]^{1/2},
$$
where we used the standard multi-index notation above. We denote the standard norm of $L^2(\Omega)^N$ by $\|\cdot\|.$ Finally, we set $\|D^k y\| := \left\| \left| D^k y\right|\right\|.$ 

We finish this introductory section outlining the structure of the remainder of the work.
\begin{itemize}
    \item In Section 2, we study the linearization of \eqref{Model}-\eqref{ConstitutiveLaw} around the zero trajectory --- it is a forced Stokes system. With the aid of a global Carleman estimate, we can to show that this system is null controllable. Assuming sufficiently regular initial data, we employ a bootstrapping argument to deduce higher regularity for the control, taking advantage of its characterization via Pontryagin's minimum principle. The higher control regularity naturally leads to higher regularity of the velocity field..
    
    \item In Section \ref{sec3}, we use a local inversion-to-the-right theorem for mappings between Banach spaces to show that the model \eqref{Model}-\eqref{ConstitutiveLaw} is locally null controllable. 
    
    \item It is in Section 4 that we prove Theorem \ref{NumThm}. Then, we conduct some numerical experiments to illustrate our theoretical findings.
    
    \item Finally, we conclude the work in Section \ref{Comments} with some comments and perspectives. 

\end{itemize}

\section{Study of the linearized problem} \label{sec2}

\subsection{Some previous results}

Our aim in the present Section is to establish the null controllability of the linear system:
\begin{equation} \label{Linear}
\begin{array}{l}
\begin{cases}
Ly + \nabla p = \chi_\omega v + f, \text{ in } Q, \\
\nabla \cdot y = 0, \text{ in } Q, \\
y = 0, \text{ on } \Sigma, \\
y(0) = y_0, \text{ in } \Omega,
\end{cases}
\end{array}
\end{equation}
In \eqref{Linear}, we have written $Ly := y_t - \nu_0 \Delta y.$ We achieve this result via a suitable Carleman inequality for the adjoint system of (\ref{Linear}); upon writing $L^*\varphi := -\varphi_t - \nu_0 \Delta \varphi,$ it reads
\begin{equation} \label{Adjoint}
\begin{array}{l}
\begin{cases}
L^*\varphi + \nabla \pi = g, \text{ in } Q, \\
\nabla \cdot \varphi = 0, \text{ in } Q, \\
\varphi = 0, \text{ on } \Sigma, \\
\varphi(T) = \varphi^T, \text{ in } \Omega.
\end{cases}
\end{array}
\end{equation}

In the present subsection, we fix notations that we will employ henceforth. Let us consider $\omega_1 \Subset \omega,$ with $\omega_1 \neq \emptyset.$ For the proof of the following lemma, see \cite{fursikov1996controllability}.

\begin{lemma} \label{Fursikov}
There is a function $\eta^0 \in C^2(\overline{\Omega})$ satisfying 
$$
\eta^0 >0 \text{ in } \Omega,\ \eta^0 = 0 \text{ on } \partial\Omega,\ |\nabla \eta^0| > 0 \text{ on } \overline{\Omega \backslash \omega_1}. 
$$
\end{lemma}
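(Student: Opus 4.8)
The plan is to construct the weight function $\eta^0$ explicitly from a suitable auxiliary function whose gradient does not vanish except at a single point, which we then push into $\omega_1$. First I would recall the classical construction of Fursikov and Imanuvilov: start by picking a nonempty open set $\omega_0$ with $\omega_0 \Subset \omega_1$, and seek a function $\psi \in C^2(\overline{\Omega})$ satisfying $\psi > 0$ in $\Omega$, $\psi = 0$ on $\partial\Omega$, and $|\nabla \psi| > 0$ on $\overline{\Omega \setminus \omega_0}$. Granting such a $\psi$, one sets $\eta^0 := \psi$ (or a normalization thereof), and since $\overline{\Omega\setminus\omega_1} \subset \overline{\Omega\setminus\omega_0}$, all three required properties follow immediately. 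So the crux is producing $\psi$.

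To produce $\psi$, I would use a Morse-theoretic / transversality argument. Take any $\widetilde{\psi} \in C^2(\overline{\Omega})$ with $\widetilde{\psi} > 0$ in $\Omega$ and $\widetilde{\psi} = 0$ on $\partial\Omega$ (for instance, a smoothing of the distance function to $\partial\Omega$, or the solution of $-\Delta\widetilde\psi = 1$ in $\Omega$, $\widetilde\psi = 0$ on $\partial\Omega$, whose normal derivative is strictly negative on $\partial\Omega$ by Hopf's lemma, so $|\nabla\widetilde\psi|>0$ near $\partial\Omega$). This $\widetilde\psi$ has finitely many critical points in a neighborhood of any compact set only after a generic perturbation; in general one approximates $\widetilde\psi$ by a Morse function $\psi_1$ (in the $C^2$ norm, keeping the boundary behaviour intact by perturbing only in the interior away from $\partial\Omega$) so that $\psi_1$ has only nondegenerate critical points $x_1,\dots,x_m \in \Omega$, finitely many of them. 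Then, since $\Omega$ is connected, one can choose a smooth path and modify $\psi_1$ along a tubular neighborhood of an arc joining all the $x_j$ into $\omega_0$ — equivalently, compose with a diffeomorphism of $\overline\Omega$, isotopic to the identity and equal to the identity near $\partial\Omega$, that sweeps all critical points into $\omega_0$. The resulting $\psi$ has $|\nabla\psi| > 0$ on $\overline{\Omega\setminus\omega_0}$, $\psi > 0$ in $\Omega$, $\psi = 0$ on $\partial\Omega$, and $\psi \in C^2(\overline\Omega)$.

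The main obstacle, and the step requiring the most care, is the last one: rigorously moving all critical points into the prescribed open set $\omega_0$ while preserving the sign of $\psi$ on $\Omega$, the vanishing on $\partial\Omega$, and the $C^2$ regularity. One clean way to handle it is to avoid Morse theory altogether and instead invoke the known result directly — this is exactly \cite[Lemma 1.1]{fursikov1996controllability} (Fursikov–Imanuvilov), whose proof already contains the diffeomorphism argument — which is why the statement here simply cites that reference. For a self-contained treatment, I would phrase the critical-point displacement as follows: let $\Gamma$ be a $C^2$ arc in $\Omega$ through $x_1,\dots,x_m$ with a tubular neighbourhood $U \Subset \Omega$; build a $C^2$ vector field $X$ supported in $U$ whose time-one flow $\Phi$ maps $\{x_1,\dots,x_m\}$ into $\omega_0$; then $\psi := \psi_1 \circ \Phi^{-1}$ works, because $\Phi$ is the identity near $\partial\Omega$, so $\psi$ keeps the boundary properties, and the critical set of $\psi$ is $\Phi(\{x_j\}) \subset \omega_0$. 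Finally, one may rescale so that $\|\eta^0\|_{C(\overline\Omega)}$ or $\max \eta^0 - \min\eta^0$ meets whatever normalization later Carleman estimates require; I would flag that in a remark rather than in the proof of the lemma itself.
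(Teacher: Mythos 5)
Your construction is correct and is precisely the one from Fursikov--Imanuvilov (Morse approximation of a positive boundary-vanishing function, followed by a compactly supported diffeomorphism sweeping the finitely many nondegenerate critical points into the control subdomain), which is exactly what the paper invokes: it gives no proof of its own and simply cites \cite{fursikov1996controllability} for this lemma. So your argument coincides with the paper's intended one.
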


We take $l \in C^\infty(\left[0,T\right])$ with 
$$
l(t) \geqslant T^2/4 \text{ on } \left[0,T/2\right],\ l(t) = t(T-t), \text{ on } \left[T/2,T\right].
$$
We define
$$
\overline{\gamma}(x) := e^{\lambda(\eta^0(x) +m\|\eta^0\|_\infty)},
$$
$$
\overline{\alpha}(x) := e^{\frac{5}{4}\lambda m\|\eta^0\|_\infty} - e^{\lambda(\eta^0(x) + m\|\eta^0\|_\infty)},
$$
$$
\gamma_1 := \min_{\overline{\Omega}} \overline{\gamma},\ \gamma_2 := \max_{\overline{\Omega}} \overline{\gamma},
$$
$$
\alpha_1 := \min_{\overline{\Omega}} \overline{\alpha},\ \alpha_2 := \max_{\overline{\Omega}} \overline{\alpha},
$$
and
$$
\gamma := \frac{\overline{\gamma}}{l^4},\ \alpha := \frac{\overline{\alpha}}{l^4}.
$$

\begin{remark} \label{rem:multimin_maj_max}
Given $C>1,$ $m>4,$ there exists $\lambda_0=\lambda_0(m,C)>0$ such that $\alpha_2 \leqslant C\alpha_1,$ for all $\lambda \geqslant \lambda_0.$ 
\end{remark}

For $s,\lambda>0,$ we write
\begin{align*}
    I(s,\lambda,\varphi) :=&\, s^3\lambda^4 \int_Q e^{-2s\alpha}\gamma^3|\varphi|^2d(t,x) + s\lambda^2\int_Q e^{-2s\alpha}\gamma |\nabla \varphi|^2 d(t,x) \\
    &+ s^{-1}\int_Q e^{-2s\alpha}\gamma^{-1}\left(|\varphi_t|^2 + |\Delta \varphi|^2 \right) d(t,x).
\end{align*}

We are ready to recall the Carleman inequality that is the key to study the null controllability of the linear system \eqref{Linear}.
\begin{proposition} \label{Carleman}
There exist positive constants $\widehat{s},$ $\widehat{\lambda}$ and $C$ depending solely on $\Omega$ and $\omega$ for which the relations $g \in L^2(Q)^N,$ $\varphi^T \in H,$ $\lambda \geqslant \widehat{\lambda}$ and $s \geqslant \widehat{s}(T^4 + T^8)$ imply 
$$
\begin{array}{l} \displaystyle
I(s,\lambda,\varphi) \leqslant C(1+T^2)\bigg(s^{\frac{15}{2}}\lambda^{20}\int_Q e^{-4s\alpha_1 +2s\alpha_2}\left(\frac{\gamma_2}{l^4}\right)^{\frac{15}{2}}|g|^2d(t,x) 
\\ \noalign{\smallskip} \displaystyle
\hspace{1.7cm} \ +\ s^{16}\lambda^{40}\int_0^T \int_{\omega_1} e^{-8s\alpha_1 + 6s\alpha_2}\left(\frac{\gamma_2}{l^4}\right)^{16}|\varphi|^2dx\ dt \bigg),
\end{array}
$$
where $\varphi$ is the solution of (\ref{Adjoint}) corresponding to $g$ and $\varphi^T$.
\end{proposition}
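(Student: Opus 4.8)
\emph{Outline of the intended proof.}\quad The plan is to establish the asserted inequality from the classical scalar parabolic Carleman estimate: one applies it to each component of the adjoint state $\varphi$, absorbing the pressure gradient into the source, and then controls the resulting pressure contribution by means of an elliptic Carleman estimate. All constants must be tracked explicitly in order to recover the prefactor $1+T^2$, the threshold on $s$, and the polynomial powers of $s$ and $\lambda$.

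\textbf{Step 1.} First I would record the Fursikov--Imanuvilov Carleman inequality for the operator $L^\ast=-\partial_t-\nu_0\Delta$ with homogeneous Dirichlet boundary conditions, for the weights $\alpha,\gamma$ of the present section --- these are the usual ones, except that they carry the power $l^{-4}$ in place of the customary $l^{-1}$. Since this only strengthens the time singularity, the standard proof (conjugation $\psi=e^{-s\alpha}\varphi$, splitting the conjugated operator into its self-adjoint and skew-adjoint parts, estimation of the cross term after integration by parts, and absorption of the lower-order terms for $s,\lambda$ large) goes through once the exponents of $l$ are adjusted; what needs care is to keep the bounds for $\partial_t\alpha$, $\partial_t^2\alpha$, $\partial_t\gamma$ --- which now carry additional powers of $l^{-1}\lesssim T^{-2}$ on $[0,T/2]$ --- completely explicit. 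This is what yields the factor $1+T^2$ and the threshold $s\geqslant\widehat s(T^4+T^8)$: since $\gamma\gtrsim T^{-8}$ on $[0,T/2]$, the absorptions require conditions of the type $s\gamma^{1/2}\gtrsim1$ and $s\gamma\gtrsim1$, forcing $s\gtrsim T^4+T^8$. Applying this inequality to each $\varphi_i$, which solves $L^\ast\varphi_i=g_i-\partial_i\pi$ with $\varphi_i=0$ on $\Sigma$, and summing over $i$, one obtains
\[
I(s,\lambda,\varphi)\leqslant C(1+T^2)\Bigl[\,A(g)+B(\pi)+\text{(local term in }\varphi)\,\Bigr],
\]
where $A(g)$ is a weighted integral of $|g|^2$ over $Q$, $B(\pi)$ is a weighted integral of $|\pi|^2$ over $Q$, and the local term is supported in a subset of $\omega$.

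\textbf{Step 2 (the main obstacle).} The term $B(\pi)$ is the delicate one. Taking the divergence of the first equation of \eqref{Adjoint} and using $\nabla\cdot\varphi=0$ gives $\Delta\pi=\nabla\cdot g$ in $\Omega$; after normalising $\int_\Omega\pi\,dx=0$, I would invoke a global Carleman estimate for the Laplacian (with the same spatial weight $e^{-s\alpha}$ and compatible powers of $s,\lambda,\gamma$) to bound $B(\pi)$ by a weighted integral of $|g|^2$ plus a \emph{local} weighted integral of $|\nabla\pi|^2$ over a set $\omega_0$ with $\omega_1\Subset\omega_0\Subset\omega$. This local pressure term is in turn removed by using the equation: on $\omega_0$ one has $\nabla\pi=g+\varphi_t+\nu_0\Delta\varphi$, and a cut-off function supported in $\omega$ together with standard interior estimates for \eqref{Adjoint} bound it by local integrals of $|g|^2$ and of $\varphi$ (with its first- and second-order derivatives). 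Getting the parabolic and the elliptic Carleman estimates to cooperate --- with mutually compatible weights and with every constant kept absorbable --- is the step I expect to be the principal difficulty; eliminating $\pi$ by taking the curl of \eqref{Adjoint} is not a usable shortcut here, because the resulting equation for the vorticity has no tractable boundary condition.

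\textbf{Step 3 (assembly).} It remains to absorb into $I(s,\lambda,\varphi)$ the global $\varphi$-contributions produced in Step 2, by taking $\lambda\geqslant\widehat\lambda$ and $s\geqslant\widehat s(T^4+T^8)$ large; to replace the $x$-dependent exponential weights on the right-hand side by the $x$-independent ones $e^{-4s\alpha_1+2s\alpha_2}$ and $\gamma_2/l^4$ with the help of Remark \ref{rem:multimin_maj_max}; and, finally, to trade the remaining local term in $\varphi_t,\Delta\varphi$ for one containing only $|\varphi|^2$ over $\omega_1$, by a cut-off argument combined with a weighted energy estimate for \eqref{Adjoint}. The last two manipulations, compounded with the powers of $\lambda$ introduced by the elliptic estimate and by the successive absorptions, are exactly what inflates the polynomial factors to $s^{15/2}\lambda^{20}$ on the $g$-term and $s^{16}\lambda^{40}$ on the observation term. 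Collecting the constants yields the stated inequality.
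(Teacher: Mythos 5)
The paper does not prove Proposition~\ref{Carleman}: the text introduces it with ``we are ready to \emph{recall} the Carleman inequality,'' and no proof environment follows the statement. This exact form of the estimate --- with the weight $l$ bounded away from zero near $t=0$, the exponents $\alpha_1,\alpha_2,\gamma_2$, and the particular powers $s^{15/2}\lambda^{20}$ and $s^{16}\lambda^{40}$ --- is imported from the prior controllability literature for the forced Stokes system, essentially \cite{fernandez2015theoretical} (which in turn builds on \cite{fernandez2004local}). Thus there is no in-paper argument to compare yours against; the paper's own stance is that the result is known and to be cited, not derived.

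With that caveat, your outline is consistent with the route those references take: component-wise application of a scalar parabolic Carleman inequality with $\nabla\pi$ absorbed into the source; an elliptic Carleman estimate for $\pi$ via $\Delta\pi=\nabla\cdot g$ (after normalisation of $\pi$); elimination of the local pressure term using the equation itself; and a final pass to replace the $x$-dependent weights by the constants $\alpha_1,\alpha_2,\gamma_2$ and to trade local derivatives of $\varphi$ for a local term in $|\varphi|^2$. You also correctly identify the pressure as the hard part and correctly reject the curl shortcut. However, the outline stops precisely where the real work lies: the specific polynomial orders, the $1+T^2$ prefactor, and the threshold $s\geqslant\widehat{s}(T^4+T^8)$ are the arithmetic record of the actual sequence of absorptions and cannot be recovered without carrying them out; you only gesture at why they should arise. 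One further point you should make explicit: the elliptic Carleman estimate you invoke for $\pi$ must hold with a weight that depends on $t$ through $\alpha(t,\cdot)$, and you need constants uniform in $t$ --- this is a frozen-time application of a spatial Carleman estimate, and stating it that way is not a cosmetic detail. None of this is conceptually wrong, but it leaves the proposal as a plausible sketch rather than a proof; for the paper's purposes a precise citation is the appropriate substitute.
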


As a consequence, we get the following Observability Inequality.
\begin{corollary} \label{Observability}
With the notations of Proposition \ref{Carleman} (possibly enlarging $\widehat{s},$ $\widehat{\lambda}$ and $C,$ the latter now depending on $T$), we have
\scriptsize
$$
\|\varphi(0)\|^2 \leqslant C\left(s^{\frac{15}{2}}\lambda^{20}\int_Q e^{-4s\alpha_1 +2s\alpha_2}\gamma_2^{\frac{15}{2}}|g|^2d(t,x) + s^{16}\lambda^{40}\int_0^T \int_{\omega_1} e^{-8s\alpha_1 + 6s\alpha_2}\gamma_2^{16}|\varphi|^2dx\ dt \right).
$$
\normalsize
\end{corollary}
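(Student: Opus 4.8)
The goal is to pass from the Carleman estimate of Proposition \ref{Carleman} to a clean bound on $\|\varphi(0)\|^2$. The starting point is a classical energy (dissipation) estimate for the backward Stokes system \eqref{Adjoint}: testing the first equation against $\varphi$ and using $\nabla\cdot\varphi=0$ to kill the pressure term yields, for $0\leqslant t \leqslant T$,
\begin{equation*}
\frac{1}{2}\frac{d}{dt}\|\varphi(t)\|^2 + \nu_0\|\nabla\varphi(t)\|^2 = -\int_\Omega g\cdot\varphi\, dx,
\end{equation*}
whence, after integrating on an interval and absorbing the right-hand side with Young's inequality, one gets that $t\mapsto\|\varphi(t)\|^2$ is essentially non-increasing up to a controllable contribution from $g$; more precisely $\|\varphi(0)\|^2 \leqslant C\big(\|\varphi(t)\|^2 + \int_0^T\|g\|^2\,dt\big)$ for a.e.\ $t$ in a fixed subinterval, say $[T/4,3T/4]$. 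Integrating this inequality in $t$ over that subinterval gives
\begin{equation*}
\|\varphi(0)\|^2 \leqslant C\Big( \int_{T/4}^{3T/4}\|\varphi(t)\|^2\,dt + \int_Q |g|^2\,d(t,x)\Big).
\end{equation*}

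Next I would bound the first term on the right by the Carleman functional $I(s,\lambda,\varphi)$. On the time interval $[T/4,3T/4]$ the weight functions $l(t)$, $\overline\gamma$, $\overline\alpha$, $\gamma$, $\alpha$ and the exponential $e^{-2s\alpha}$ are all bounded above and below by positive constants depending only on $\Omega$, $\omega$, $T$, $s$, $\lambda$; in particular $s^3\lambda^4 e^{-2s\alpha}\gamma^3 \geqslant c > 0$ there. Hence $\int_{T/4}^{3T/4}\|\varphi\|^2\,dt \leqslant C\, s^3\lambda^4\int_Q e^{-2s\alpha}\gamma^3|\varphi|^2\,d(t,x) \leqslant C\, I(s,\lambda,\varphi)$. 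Applying Proposition \ref{Carleman} with the (now fixed) choices $\lambda=\widehat\lambda$ and $s=\widehat s(T^4+T^8)$ bounds $I(s,\lambda,\varphi)$ by the two terms on the right-hand side of the Carleman inequality; combining with the energy estimate above and folding the extra $\int_Q|g|^2$ into the $g$-term (the weight $e^{-4s\alpha_1+2s\alpha_2}\gamma_2^{15/2}$ being bounded below by a positive constant, after possibly enlarging $C$ to depend on $T$) yields exactly the claimed inequality, with the local integral of $|\varphi|^2$ over $(0,T)\times\omega_1$ appearing with the stated weight and powers of $s,\lambda$.

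The only mildly delicate point is bookkeeping of the weights: one must check that, once $s$ and $\lambda$ are frozen, every weight that gets discarded is indeed bounded away from $0$ and $\infty$ on the relevant region, so that the constant $C$ (now allowed to depend on $T$, as the statement permits) genuinely absorbs them; this is routine given Remark \ref{rem:multimin_maj_max} and the explicit form of $l$, $\overline\gamma$, $\overline\alpha$. No new idea beyond the standard Carleman-to-observability mechanism is needed, so I do not anticipate a real obstacle here — the substance of the argument is entirely contained in Proposition \ref{Carleman}.
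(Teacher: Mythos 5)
Your overall strategy is the standard Carleman-to-observability mechanism (energy estimate on the backward system, lower-bound the middle time slab contribution to $I(s,\lambda,\varphi)$, then invoke the Carleman estimate), which is exactly what the corollary implicitly invokes, since the paper offers no separate proof. However, the final absorption step contains a genuine error.

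You assert that the weight $e^{-4s\alpha_1+2s\alpha_2}\gamma_2^{15/2}$ ``is bounded below by a positive constant.'' In the paper's notation this exponential is shorthand for the time-dependent quantity $\exp\bigl((-4s\alpha_1+2s\alpha_2)\,l^{-4}(t)\bigr)$ (compare with the definition of $\mu_{p,q,r}$ and the weight $\rho_3^{-2}=\mu_{-4,2,-30}$ appearing in Corollary~\ref{Carleman2}). Since \eqref{eq:multmin_maj_max} with a suitable $\gamma$ gives $\alpha_2<2\alpha_1$ strictly, the coefficient $-4\alpha_1+2\alpha_2$ is strictly negative; and $l(t)=t(T-t)\to 0$ as $t\to T$, so $l^{-4}(t)\to+\infty$. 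Hence the weight tends to $0$ as $t\to T^-$ and is emphatically \emph{not} bounded below on $[0,T]$: no constant $C=C(T)$ can absorb $\int_Q|g|^2\,d(t,x)$ into the weighted $g$-integral as you claim.

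The fix is to keep track of where your $\|g\|^2$ contribution actually lives. The Gronwall-type energy inequality you derive reads, for each $t\in[0,T]$,
\begin{equation*}
\|\varphi(0)\|^2 \leqslant e^T\|\varphi(t)\|^2 + e^T\int_0^t\|g(s)\|^2\,ds ,
\end{equation*}
and integrating this in $t$ over $[T/4,3T/4]$ produces $\int_0^{3T/4}\|g\|^2\,dt$, \emph{not} $\int_Q|g|^2\,d(t,x)$. On $[0,3T/4]$ one has $l(t)\geqslant 3T^2/16>0$ by the choice of $l$, so $l^{-4}$ is bounded above there and the weight $e^{(-4s\alpha_1+2s\alpha_2)l^{-4}}\gamma_2^{15/2}$ is indeed bounded below by a constant depending only on $T$, $s$, $\lambda$, $\Omega$, $\omega$. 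With the $g$-integral correctly restricted to $[0,3T/4]$, your absorption argument goes through, and the rest of your proof is sound.
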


From now on, we fix $\lambda = \widehat{\lambda}$ and $s=\widehat{s}.$ Moreover, in view of Remark \ref{rem:multimin_maj_max}, given $\gamma > 0,$ we can take $\widehat{\lambda} = \widehat{\lambda}(\gamma)$ large enough in such a way that 
\begin{equation} \label{eq:multmin_maj_max}
    \alpha_2 < (1+\gamma)\alpha_1.
\end{equation}
Whenever we need \eqref{eq:multmin_maj_max} in subsequent estimates, for a suitable positive real number $\gamma,$ we will assume it holds in all that follows.

For $p,q,r \in \mathbb{R},$ we introduce the weights
$$
\mu_{p,q,r}(t):= \exp\left\{ps\alpha_1 l^{-4}(t) \right\}\exp\left\{qs\alpha_2 l^{-4}(t) \right\} l^r(t).
$$
Regarding these weights, it is valuable to note:
\begin{remark} \label{Weights}
Let $p,p_1,p_2,q,q_1,q_2,r,r_1,r_2$ be nine real numbers.
\begin{itemize}
    \item[(a)] One has the equality $\mu_{p_1,q_1,r_1}\mu_{p_2,q_2,r_2} = \mu_{p_1+p_2,q_1+q_2,r_1+r_2}.$ In particular, for integral $k,$ $\mu_{p,q,r}^k = \mu_{kp,kq,kr}.$
    \item[(b)] There exists a constant $C>0$ such that 
    $$
    \begin{cases}
    \left|\frac{d}{dt} \mu_{p,q,r}\right|\leqslant C\mu_{p,q,r-5}, \\
    \left|\frac{d}{dt} \left(\mu_{p,q,r}^2\right) \right|\leqslant C \mu_{p,q,r-\frac{5}{2}}^2. 
    \end{cases}
    $$
    \item[(c)] There exists a constant $C>0$ such that $\mu_{p_1,q_1,r_1} \leqslant C\mu_{p_2,q_2,r_2}$ if, and only if,
    $$
    \begin{cases}
    p_1\alpha_1 + q_1\alpha_2 = p_2\alpha_1 + q_2\alpha_2 \text{ and } r_1 \geqslant r_2, \\
    \text{ or } \\
    p_1\alpha_1 + q_1\alpha_2 < p_2\alpha_1 + q_2\alpha_2.
    \end{cases}
    $$
\end{itemize}
\end{remark}

We define the weights
$$
\rho_0 := \mu_{0,1,6},
$$
$$
\rho_1 := \mu_{0,1,2},
$$
$$
\rho_2 := \mu_{0,1,-2},
$$
$$
\rho_3 := \mu_{2,-1,15},
$$
and
$$
\rho_4 := \mu_{4,-3,32}.
$$
With these notations, we can gather Proposition \ref{Carleman} and Corollary \ref{Observability} together, resulting in the following statement.
\begin{corollary} \label{Carleman2}
There is a constant $C=C(\Omega,\omega,\widehat{s},\widehat{\lambda},m,T)>0$ such that the solution $\varphi$ of (\ref{Adjoint}) corresponding to $g \in L^2(Q)^N$ and $\varphi^T \in H$ satisfies
$$
\begin{array}{l} \displaystyle
\|\varphi(0)\|^2 + \int_{Q}\left[ \rho_0^{-2}|\varphi|^2 + \rho_1^{-2}|\nabla\varphi|^2 + \rho_2^{-2}\left(|\varphi_t|^2 + |\Delta \varphi|^2 \right)\right]d(t,x)
\\ \noalign{\smallskip} \displaystyle
\hspace{2.5cm} \ \leqslant C\left(\int_Q\rho_3^{-2}|g|^2d(t,x) + \int_0^T \int_{\omega_1}\rho_{4}^{-2}|\varphi|^2 dx\ dt \right).
\end{array}
$$
\end{corollary}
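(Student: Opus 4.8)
The plan is to derive Corollary \ref{Carleman2} by combining the Carleman estimate of Proposition \ref{Carleman}, the observability inequality of Corollary \ref{Observability}, and the weight bookkeeping collected in Remark \ref{Weights}, after fixing $\lambda = \widehat\lambda$ and $s = \widehat s$. Once those two parameters are frozen, every power of $s$ and $\lambda$ is an absorbable constant, so the content is purely a comparison of the various exponential-polynomial weights against the $\rho_i$'s. The first move is to rewrite the left-hand side of Proposition \ref{Carleman}: since $\gamma = \overline\gamma/l^4$ with $\gamma_1 \leqslant \overline\gamma \leqslant \gamma_2$, each factor $\gamma^j$ is comparable (up to constants depending only on $\Omega$, $\omega$, $\lambda$) to $l^{-4j}$, and $e^{-2s\alpha} = e^{-2s\overline\alpha/l^4}$ is bounded below by $e^{-2s\alpha_2/l^4}$. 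Hence $s^3\lambda^4 e^{-2s\alpha}\gamma^3 \geqslant C^{-1} e^{-2s\alpha_2 l^{-4}} l^{-12} = C^{-1}\mu_{0,-2,-12}^{?}$; more precisely one matches these to $\rho_0^{-2} = \mu_{0,-1,-6}^2 = \mu_{0,-2,-12}$, $\rho_1^{-2} = \mu_{0,-2,-4}$, and $\rho_2^{-2} = \mu_{0,-2,4}$, using Remark \ref{Weights}(a),(c) to see that the actual weights in $I(s,\lambda,\varphi)$ dominate (up to a constant) the weights $\rho_0^{-2},\rho_1^{-2},\rho_2^{-2}$ built into the statement. In the terms carrying $\varphi_t$ and $\Delta\varphi$ one gains two powers of $l$ relative to $\gamma^{-1}$ since $\gamma^{-1} = \overline\gamma^{-1} l^4$, which is exactly why $\rho_2$ has the exponent $-2$ rather than something more negative.

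Next I would handle the right-hand side. The local term $s^{16}\lambda^{40}\int_0^T\int_{\omega_1} e^{-8s\alpha_1 + 6s\alpha_2}(\gamma_2/l^4)^{16}|\varphi|^2$ already has the shape of $\int_0^T\int_{\omega_1}\rho_4^{-2}|\varphi|^2$, because $\rho_4 = \mu_{4,-3,32}$ gives $\rho_4^{-2} = \mu_{-8,6,-64}$, i.e.\ $e^{-8s\alpha_1 l^{-4}} e^{6 s \alpha_2 l^{-4}} l^{-64}$, and $(\gamma_2/l^4)^{16}$ contributes the $l^{-64}$ while the exponential in the Carleman estimate, evaluated with the constant weights $\alpha_1,\alpha_2$, matches the $e^{-8s\alpha_1 l^{-4}+6s\alpha_2 l^{-4}}$ factor after noting $e^{-8s\alpha_1 + 6s\alpha_2}$ in the statement of Proposition \ref{Carleman} is really shorthand for the time-dependent $e^{(-8s\alpha_1+6s\alpha_2)l^{-4}(t)}$ coming from the construction; one absorbs the $(1+T^2)$ and the $s,\lambda$ powers into $C$. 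Similarly the source term $s^{15/2}\lambda^{20}\int_Q e^{-4s\alpha_1+2s\alpha_2}(\gamma_2/l^4)^{15/2}|g|^2$ matches $\int_Q\rho_3^{-2}|g|^2$ since $\rho_3 = \mu_{2,-1,15}$ yields $\rho_3^{-2} = \mu_{-4,2,-30} = e^{-4s\alpha_1 l^{-4}}e^{2s\alpha_2 l^{-4}} l^{-30}$ and $(\gamma_2/l^4)^{15/2} = \gamma_2^{15/2} l^{-30}$. Finally I would add in Corollary \ref{Observability} to pick up the extra term $\|\varphi(0)\|^2$ on the left, which is bounded by exactly the same right-hand side (up to enlarging $C$), so the two inequalities can simply be summed.

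The main obstacle — though it is bookkeeping rather than a genuine difficulty — is being careful that the weights written with the \emph{constant} exponents $\alpha_1,\alpha_2$ in Proposition \ref{Carleman} and Corollary \ref{Observability} are the ones that legitimately arise from bounding $e^{-2s\alpha(t,x)}$ above/below by $e^{-2s\alpha_i l^{-4}(t)}$, and that the inequality \eqref{eq:multmin_maj_max}, $\alpha_2 < (1+\gamma)\alpha_1$ for suitably small $\gamma$, is what makes the exponents in $\rho_3,\rho_4$ (which have a \emph{positive} $\alpha_2$ contribution partially cancelling a larger negative $\alpha_1$ contribution) still correspond to decaying weights as $t\uparrow T$, so that the right-hand side is finite for $g\in L^2(Q)^N$. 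Concretely, one needs $4\alpha_1 - 2\alpha_2 > 0$ and $8\alpha_1 - 6\alpha_2 > 0$, i.e.\ $\alpha_2 < 2\alpha_1$ and $\alpha_2 < \tfrac{4}{3}\alpha_1$, which both follow from \eqref{eq:multmin_maj_max} with $\gamma < 1/3$; this is precisely the role of the earlier remark fixing $\widehat\lambda$ large. With that observation in place, the remaining work is to apply Remark \ref{Weights}(a),(c) termwise to certify each of the claimed comparisons, absorb all powers of $\widehat s$, $\widehat\lambda$, $T$, and the constants $\gamma_1,\gamma_2,\alpha_1,\alpha_2$ into the single constant $C=C(\Omega,\omega,\widehat s,\widehat\lambda,m,T)$, and conclude.
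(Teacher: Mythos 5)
Your proposal is correct and follows essentially the same route the paper takes, namely treating the corollary as pure bookkeeping: fix $s=\widehat s$, $\lambda=\widehat\lambda$, absorb all powers of $s$, $\lambda$, $\gamma_1$, $\gamma_2$, $(1+T^2)$ into $C$, and match each weight in Proposition \ref{Carleman} and Corollary \ref{Observability} against $\rho_0^{-2},\dots,\rho_4^{-2}$ via the $\mu_{p,q,r}$ algebra of Remark \ref{Weights}. Your observations that the exponentials $e^{-4s\alpha_1+2s\alpha_2}$, $e^{-8s\alpha_1+6s\alpha_2}$ in Proposition \ref{Carleman} must be read with the implicit $l^{-4}(t)$ factor (otherwise the right-hand side is infinite and the corollary would not follow), and that the Observability term, which lacks the polynomial $l^{-r}$ factors, is still dominated by $C\rho_3^{-2}$ and $C\rho_4^{-2}$ because $l$ is bounded above on $[0,T]$, are exactly the points one needs to check; the paper leaves them tacit.
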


\subsection{Null controllability of the linear system}

\begin{theorem} \label{ControlOfLinearSystem}
We suppose $y_0 \in H,$ $\rho_0 f \in L^2(Q)^N.$ Then there exist controls $v \in L^2(\left]0,T\right[\times\omega)^N$ such that the state $y$ of (\ref{Linear}) corresponding to $v, f$ and $y_0$ satisfies
\begin{equation} \label{estimate0}
    \int_Q \rho_3^2|y|^2d(t,x) + \int_0^T\int_\omega \rho_4^2|v|^2dx\ dt \leqslant C\kappa_0(y_0,f),
\end{equation}
where
$$
\kappa_0(y_0,f) := \|y_0\|_H^2 + \int_Q \rho_0^2|f|^2d(t,x).
$$
In particular, $y(T) = 0$ almost everywhere in $\Omega.$
\end{theorem}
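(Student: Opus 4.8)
The plan is to use the standard duality/penalization approach for proving null controllability from an observability inequality, here in the weighted form of Corollary \ref{Carleman2}. First I would fix $\varepsilon > 0$ and introduce the penalized functional
$$
J_\varepsilon(\varphi^T) := \frac{1}{2}\int_0^T\int_{\omega_1}\rho_4^{-2}|\varphi|^2\,dx\,dt + \frac{1}{2}\int_Q \rho_3^{-2}|g|^2\,d(t,x) + \frac{\varepsilon}{2}\|\varphi^T\|_H^2 + \int_\Omega \varphi(0)\cdot y_0\,dx + \int_Q \varphi\cdot f\,d(t,x),
$$
wait — more precisely I should be careful: the natural functional to minimize over $\varphi^T\in H$ (with $\varphi$ the corresponding solution of the adjoint system \eqref{Adjoint} with $g\equiv 0$, since the source $f$ is already given) is
$$
J_\varepsilon(\varphi^T) := \frac12\int_0^T\!\!\int_{\omega_1}\rho_4^{-2}|\varphi|^2\,dx\,dt + \varepsilon\|\varphi^T\|_H + \langle \varphi(0),y_0\rangle + \int_Q f\cdot\varphi\,d(t,x).
$$
This is continuous, strictly convex, and coercive on $H$ — coercivity is exactly where Corollary \ref{Carleman2} enters, bounding $\|\varphi(0)\|$ and the source term $\int_Q\rho_3^{-2}|g|^2$ (here $g=0$) by the observation integral. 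So $J_\varepsilon$ has a unique minimizer $\widehat{\varphi}^T_\varepsilon$; writing $\widehat\varphi_\varepsilon$ for the associated adjoint state, the Euler–Lagrange equation identifies the control as $v_\varepsilon := \chi_{\omega_1}\rho_4^{-2}\widehat\varphi_\varepsilon$ (extended by zero to $\omega$), and the corresponding solution $y_\varepsilon$ of \eqref{Linear} satisfies $\|y_\varepsilon(T)\|_H \leqslant C\varepsilon$.

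Next I would extract the weighted estimate \eqref{estimate0} uniformly in $\varepsilon$. Testing the optimality condition against $\widehat\varphi^T_\varepsilon$ itself gives $\int_0^T\int_{\omega_1}\rho_4^{-2}|\widehat\varphi_\varepsilon|^2 + \varepsilon\|\widehat\varphi^T_\varepsilon\|_H \leqslant |\langle\widehat\varphi_\varepsilon(0),y_0\rangle| + |\int_Q f\cdot\widehat\varphi_\varepsilon|$, and then Cauchy–Schwarz together with Corollary \ref{Carleman2} (using $\rho_0 f\in L^2$ to control $\int_Q f\cdot\widehat\varphi_\varepsilon \leqslant \|\rho_0 f\|_{L^2}\|\rho_0^{-1}\widehat\varphi_\varepsilon\|_{L^2}$) yields a bound on $\int_0^T\int_{\omega_1}\rho_4^{-2}|\widehat\varphi_\varepsilon|^2$ by $C\kappa_0(y_0,f)$, hence $\int_0^T\int_\omega\rho_4^2|v_\varepsilon|^2 \leqslant C\kappa_0(y_0,f)$. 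For the state estimate I would derive a weighted energy identity for $y_\varepsilon$: multiply the equation for $y_\varepsilon$ by $\rho_3^2 y_\varepsilon$ (or rather work with the direct inequality dual to the $\rho_3^{-2}|g|^2$ term in the Carleman estimate), using Remark \ref{Weights}(b) to absorb the time derivatives of the weight, to obtain $\int_Q\rho_3^2|y_\varepsilon|^2 \leqslant C\big(\|y_0\|_H^2 + \int_Q\rho_0^2|f|^2 + \int_0^T\int_\omega\rho_4^2|v_\varepsilon|^2\big) \leqslant C\kappa_0(y_0,f)$; one must check the weight compatibility $\rho_3, \rho_0, \rho_4$ via Remark \ref{Weights}(c), which is presumably why these particular exponents were chosen.

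Finally I would pass to the limit $\varepsilon\downarrow 0$: the uniform bounds give weak limits $v_\varepsilon\rightharpoonup v$ in $L^2(\rho_4^2\,dx\,dt)$ and $y_\varepsilon\rightharpoonup y$ in $L^2(\rho_3^2\,d(t,x))$, the limit pair $(y,v)$ solves \eqref{Linear}, the weighted estimate \eqref{estimate0} survives by weak lower semicontinuity of the norms, and $\|y_\varepsilon(T)\|_H\leqslant C\varepsilon\to 0$ forces $y(T)=0$. The step I expect to be the main obstacle is the weighted energy estimate for the state: producing $\int_Q\rho_3^2|y|^2\leqslant C\kappa_0$ requires carefully handling the weight's time derivative (which blows up as $t\uparrow T$), choosing test functions compatible with the incompressibility constraint so the pressure drops out, and verifying all the inequalities among $\rho_0,\rho_3,\rho_4$ through Remark \ref{Weights} — the duality/penalization part and the limit passage are by comparison routine.
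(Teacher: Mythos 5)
Your proposal uses the Fursikov--Imanuvilov penalization over the terminal datum $\varphi^T$, while the paper instead sets up a Lax--Milgram / Riesz-representation variational problem over the completed space $P$ of \emph{pairs} $(w,\sigma)$, declares the bilinear form $b((w,\sigma),(\widetilde w,\widetilde\sigma)) = \int_Q\rho_3^{-2}(L^*w+\nabla\sigma)\cdot(L^*\widetilde w+\nabla\widetilde\sigma) + \chi\rho_4^{-2}w\cdot\widetilde w$, solves $\Lambda = b(\cdot,(\varphi,\pi))$, and then \emph{defines} $y:=\rho_3^{-2}(L^*\varphi+\nabla\pi)$ and $v:=-\chi\rho_4^{-2}\varphi$. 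This choice is not cosmetic: with it the estimate $\int_Q\rho_3^2|y|^2 = \int_Q\rho_3^{-2}|L^*\varphi+\nabla\pi|^2\leqslant b((\varphi,\pi),(\varphi,\pi))\leqslant C\kappa_0$ is immediate, since $\rho_3^2|y|^2$ is literally the first integrand of $b$. The control estimate, the null condition at $t=T$, and the state estimate all drop out of a single Riesz argument.

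The gap in your proposal is precisely the $\rho_3$ state estimate, which in your scheme must be produced separately. Your primary suggestion---multiplying the equation for $y_\varepsilon$ by $\rho_3^2 y_\varepsilon$ and running a weighted energy/Gronwall argument---does not close, for two independent reasons visible from Remark~\ref{Weights}. First, the control term $\int_\omega\rho_3^2 v_\varepsilon\cdot y_\varepsilon$ would have to be split so that the $v_\varepsilon$-factor is weighted by $\rho_4$, but this requires $\rho_3\leqslant C\rho_4$; from Remark~\ref{Weights}(c) with $\rho_3=\mu_{2,-1,15}$, $\rho_4=\mu_{4,-3,32}$, the condition $2\alpha_1-\alpha_2<4\alpha_1-3\alpha_2$ reduces to $\alpha_2<\alpha_1$, which is false, so the inequality goes the wrong way (indeed the paper only uses $\rho_4\leqslant C\rho_3\leqslant C\rho_0$). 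Second, by Remark~\ref{Weights}(b), $\left|\frac{d}{dt}\rho_3^2\right|\leqslant C\mu_{4,-2,25}$, while $\rho_3^2=\mu_{4,-2,30}$; by (c), $\mu_{4,-2,25}$ strictly dominates $\mu_{4,-2,30}$, so the weight-derivative term cannot be absorbed and Gronwall fails. (Indeed, the weighted energy estimates in Lemma~\ref{LemmaAdditional1} are run with the \emph{weaker} weights $\rho_6,\rho_7$, for exactly these reasons, and they crucially presuppose that the $\rho_3$-bound on $y$ has already been obtained.) Your alternative suggestion---``work with the direct inequality dual to the $\rho_3^{-2}|g|^2$ term''---also does not go through as stated: the transposition identity $\int_Q y_\varepsilon\cdot g = \langle y_0,\varphi(0)\rangle + \int_Q(\chi_\omega v_\varepsilon+f)\cdot\varphi$ (taking $\varphi^T=0$, source $g$) produces the cross term $\int_0^T\int_{\omega_1}\rho_4^{-2}\widehat\varphi_\varepsilon\cdot\varphi$, which after Cauchy--Schwarz leaves the factor $\|\rho_4^{-1}\varphi\|_{L^2(\left]0,T\right[\times\omega_1)}$; but that quantity sits on the \emph{right}-hand side of Corollary~\ref{Carleman2} and is not controlled by $\|\rho_3^{-1}g\|_{L^2(Q)}$. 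To recover the state bound you would in effect have to adjoin $g$ as a second minimization variable, but then you are carrying out the paper's variational argument in disguise. In short: the penalization over $\varphi^T$ alone yields the control bound and $\|y_\varepsilon(T)\|\to0$, but not the weighted state estimate that the theorem actually asserts; either switch to the Lax--Milgram formulation the paper uses, or supply a genuinely different derivation of $\int_Q\rho_3^2|y|^2\leqslant C\kappa_0$.
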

\begin{proof}
We define $P_0 := \left\{ (w,\sigma) \in C^2(\overline{Q})^{N+1} : \nabla\cdot w \equiv 0,\ w|_\Sigma \equiv 0,\ \int_\Omega \sigma \ dx = 0 \right\},$ we take $\chi \in C^\infty_c(\omega),$ with $0 \leqslant \chi \leqslant 1,$ $\chi|_{\omega_1}\equiv 1,$ and we consider on $P_0$ the continuous bilinear form
$$
b((w,\sigma),(\widetilde{w},\widetilde{\sigma})) := \int_Q \left\{ \rho_3^{-2}\left(L^*w +\nabla \sigma\right)\cdot \left(L^* \widetilde{w} + \nabla\widetilde{\sigma}\right) + \chi\rho_4^{-2}w\cdot \widetilde{w} \right\} d(t,x).
$$
By Corollary \ref{Carleman2}, $b$ is an inner product on $P_0.$ Let us denote $P:= \overline{P_0}^{b(\cdot,\cdot)},$ i.e., $P$ is the completion of $P_0$ under the norm induced by $b(\cdot,\cdot).$ We also deduce, from the corollary we just mentioned, that that the linear form
$$
\Lambda : (w,\sigma) \in P \longmapsto \int_\Omega y_0\cdot w(0) dx + \int_Q f\cdot w\ d(t,x) \in \mathbb{R}
$$
is continuous, with
$$
|\Lambda(w,\sigma)| \leqslant C\kappa_0(y_0,f)^{1/2}b((w,\sigma),(w,\sigma))^{1/2}.
$$
The Riesz representation theorem guarantees the existence of a unique $(\varphi,\pi) \in P$ for which
\begin{equation} \label{VariationalProblem}
    \Lambda(w,\sigma) = b((w,\sigma),(\varphi,\pi)) \hspace{1.0cm} (\text{for all } (w,\sigma) \in P). 
\end{equation}
Upon taking $(w,\sigma) = (\varphi,\pi)$ above, we get
$$
b((\varphi,\pi),(\varphi,\pi)) = \Lambda(\varphi,\pi) \leqslant C\kappa_0(y_0,f)^{1/2}b((\varphi,\pi),(\varphi,\pi))^{1/2},
$$
whence
$$
b((\varphi,\pi),(\varphi,\pi)) \leqslant C\kappa_0(y_0,f).
$$
Let us set 
\begin{equation} \label{PontryaginMinPrinciple}
    y:= \rho_3^{-2}\left(L^*\varphi + \nabla \pi \right),\ z:= \rho_4^{-2}\varphi,\ v:= -\chi z.
\end{equation}
We observe that $\spt(v) \subseteq \omega,$ that $(y,v)$ is a solution of (\ref{Linear}) corresponding to the datum $y_0$ and $f,$ and applying Corollary \ref{Carleman2} once more,
$$
\int_Q \rho_3^2|y|^2d(t,x) + \int_0^T \int_\omega \rho_4^2|v|^2 dx\ dt \leqslant Cb((\varphi,\pi),(\varphi,\pi)) \leqslant C\kappa_0(y_0,f).
$$
This proves the theorem.
\end{proof}

\subsection{Weighted energy estimates} \label{Additional}

Along this subsection, we let $y_0 \in H,$ $\rho_0 f \in L^2(Q)^N,$ and let us denote by $(v,y)$ the control-state pair constructed in the proof of Theorem \ref{ControlOfLinearSystem}.

\begin{lemma} \label{LemmaAdditional1}
Let us define $\rho_6 := \mu_{1,-1/2,35/2}$ and $\rho_7 := \mu_{1,-1/2,20}.$ We have
\begin{equation} \label{estimate1}
    \sup_{\left[0,T\right]}\left( \int_\Omega \rho_6^2 |y|^2 dx\right) + \int_Q \rho_6^2 |\nabla y|^2 d(t,x) \leqslant C\kappa_0(y_0,f),
\end{equation}
and, if $y_0 \in H^1_0(\Omega)^N,$ then
\begin{equation} \label{estimate2}
    \int_Q \rho_7^2\left(|y_t|^2+|\Delta y|^2 \right)d(t,x) + \sup_{\left[0,T\right]} \left(\int_\Omega \rho_7^2|\nabla y|^2 dx \right) \leqslant C\kappa_1(y_0,f),
\end{equation}
where
$$
\kappa_1(y_0,f) := \|y_0\|_{H^1_0(\Omega)^N}^2 + \int_Q\rho_0^2 |f|^2d(t,x).
$$
\end{lemma}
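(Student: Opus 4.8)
The strategy is the standard energy-method approach, but carried out with the Carleman-type weights $\rho_6,\rho_7$ so that the boundary-in-time terms at $t=T$ are controlled by the exponential decay of the weights. First I would establish \eqref{estimate1}. Multiply the state equation $Ly+\nabla p=\chi_\omega v+f$ by $\rho_6^2 y$ and integrate over $\Omega$. The pressure term drops because $\nabla\cdot y=0$ and $y|_{\partial\Omega}=0$. The diffusion term yields $\nu_0\int_\Omega\rho_6^2|\nabla y|^2\,dx$ after integration by parts, plus a commutator term coming from $\nabla(\rho_6^2)\cdot\nabla y\, y$, which is absorbed by Young's inequality into a fraction of the gradient term plus a term of the form $\int_\Omega |\tfrac{d}{dt}(\text{something})|\,\rho_6^2|y|^2$; here I would invoke Remark \ref{Weights}(b) to bound $|\tfrac{d}{dt}(\rho_6^2)|\leqslant C\rho_{6,-5/2}^2$ and then Remark \ref{Weights}(c) to compare weights. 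The time term produces $\tfrac12\tfrac{d}{dt}\int_\Omega\rho_6^2|y|^2\,dx$ minus $\tfrac12\int_\Omega(\rho_6^2)_t|y|^2\,dx$, the latter again handled by Remark \ref{Weights}. After integrating in $t$ over $[0,T]$, the terminal term vanishes since $\rho_6(T)=0$ (it blows up to... rather, $l^{35/2}\to 0$ fast enough that $\rho_6\to 0$), and the initial term is bounded by $\|y_0\|_H^2$ because $\rho_6(0)$ is finite. The right-hand side source term $\int_Q\rho_6^2 f\cdot y$ is split by Young as $\int_Q\rho_0^2|f|^2 + \int_Q\rho_0^{-2}\rho_6^4|y|^2$, and one checks via Remark \ref{Weights}(c) that $\rho_0^{-1}\rho_6^2\leqslant C\rho_3$, so this residual term is absorbed by the already-established estimate \eqref{estimate0} from Theorem \ref{ControlOfLinearSystem}. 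The control term is handled identically using $\rho_4$. This gives \eqref{estimate1}.

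For \eqref{estimate2}, assuming now $y_0\in H^1_0(\Omega)^N$ (so that $\|y_0\|_{H^1_0}$ makes sense and the boundary compatibility is fine), I would test the equation against $\rho_7^2 y_t$ — equivalently, apply the Stokes projector and test against $\rho_7^2 A y$ — to produce simultaneously the $\int_Q\rho_7^2|y_t|^2$, the $\int_Q\rho_7^2|\Delta y|^2$ (using the Stokes regularity $\|y\|_{H^2}\leqslant C\|Ay\|$ on $\Omega$ with smooth boundary), and $\tfrac{d}{dt}\int_\Omega\rho_7^2|\nabla y|^2$. The cross terms involving $(\rho_7^2)_t$ and $\nabla(\rho_7^2)$ are again dominated through Remark \ref{Weights}(b)–(c), and crucially the lower-order leftovers are of the form $\int_Q \rho_7'^2(|\nabla y|^2+|y|^2)$ for a weight $\rho_7'$ with strictly larger power of $l$, hence controlled by \eqref{estimate1} (after checking the weight comparison $\rho_7$-with-shifted-power $\leqslant C\rho_6$, which holds because the exponential parts $\mu_{1,-1/2,\cdot}$ agree and only the polynomial power increases). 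The source and control contributions $\int_Q\rho_7^2 f\cdot y_t$, $\int_Q\rho_7^2\chi_\omega v\cdot y_t$ are split by Young, the $y_t$-parts absorbed into the left side and the $f,v$-parts bounded by $\int_Q\rho_0^2|f|^2$ and the control cost, provided $\rho_7\leqslant C\rho_0$ and $\rho_7\leqslant C\rho_4$, which one verifies from the definitions. Integrating over $[0,T]$, the $t=T$ term again vanishes by the blow-down of $\rho_7$, and the $t=0$ term gives $\|\nabla y_0\|^2=\|y_0\|_{H^1_0}^2$.

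The main obstacle, and the place where care is genuinely needed rather than routine, is the bookkeeping of the weights: one must verify that every weight appearing in a "lower-order remainder" after integration by parts is comparable (in the precise sense of Remark \ref{Weights}(c), i.e.\ same exponential rate or slower, and larger polynomial power of $l$) to one of $\rho_3,\rho_4,\rho_6,\rho_0$, so that each such term is swallowed by an already-proven bound. The specific numerical choices $\rho_6=\mu_{1,-1/2,35/2}$, $\rho_7=\mu_{1,-1/2,20}$ are engineered exactly so this chain of comparisons closes — in particular the exponent pattern $(1,-1/2)$ sits strictly below the $(2,-1)$ of $\rho_3$ in the sense that $\alpha_1 - \tfrac12\alpha_2 < 2\alpha_1 - \alpha_2$ fails to be automatic and instead forces the use of \eqref{eq:multmin_maj_max} with a suitably small $\gamma$, so I would fix $\gamma$ (hence $\widehat\lambda$) at the start of the argument to make all the required strict inequalities hold. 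A secondary technical point is justifying the formal energy identities (the multiplications by $\rho_i^2 y_t$ etc.) rigorously; this is done by the usual Galerkin/density argument on $P_0$-type approximations, or by a regularization in time, and I would simply remark that it is standard and omit it.
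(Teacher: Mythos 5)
Your overall architecture — weighted energy estimates with the test functions $\rho_6^2 y$ and then a $\rho_7^2$-weighted second-order multiplier, comparison of weights via Remark~\ref{Weights}(b)--(c) together with \eqref{eq:multmin_maj_max}, absorption against \eqref{estimate0}, Gronwall, and a Galerkin/density argument to justify the formal identities — is exactly what the paper does, and your explicit weight comparisons (e.g.\ $\rho_0^{-1}\rho_6^2\leqslant C\rho_3$, and the need for $\alpha_2<2\alpha_1$ forcing a small $\gamma$ in \eqref{eq:multmin_maj_max}) are correct. The one genuinely minor deviation is that the paper tests with $\rho_7^2(y_t-\nu_0 A y)$, which produces $\int\rho_7^2(|y_t|^2+|\Delta y|^2)$ and $\tfrac{d}{dt}\int\rho_7^2|\nabla y|^2$ in a single identity, whereas you test with $\rho_7^2 y_t$ and recover $|\Delta y|^2$ afterwards from Stokes regularity and the equation; both routes are fine.

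However, your write-up contains two conceptual misstatements about the weights that you should fix, even though neither actually invalidates a step. First, the weights $\rho_6=\mu_{1,-1/2,35/2}$ and $\rho_7=\mu_{1,-1/2,20}$ do \emph{not} tend to zero as $t\uparrow T$; since $\alpha_1-\tfrac12\alpha_2>0$ (a consequence of \eqref{eq:multmin_maj_max} with $\gamma<1$) and $l(t)\to 0$, the exponential factor $\exp\{s(\alpha_1-\tfrac12\alpha_2)l^{-4}\}$ blows up, dominating $l^{35/2}\to 0$, so $\rho_6,\rho_7\to+\infty$ as $t\uparrow T$ — this is stated explicitly in the remark after Theorem~\ref{MainThm}. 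The terminal boundary term therefore does not vanish; rather, one integrates the differential identity over $[0,t]$ for $t<T$ and takes a supremum, so the term $\int_\Omega\rho_6^2(t)|y(t)|^2\,dx$ is precisely what one bounds (and the finiteness of $\sup_t\int_\Omega\rho_6^2|y|^2\,dx$ with $\rho_6(T)=\infty$ is what forces $y(T)=0$). Your framing ``so that the boundary-in-time terms at $t=T$ are controlled by the exponential decay of the weights'' and ``the terminal term vanishes since $\rho_6(T)=0$'' get this backwards. Second, the weights $\mu_{p,q,r}$ are functions of $t$ alone ($\alpha_1,\alpha_2$ are constants, $l=l(t)$), so there is no spatial gradient $\nabla(\rho_6^2)$ and hence no ``commutator term'' $\nabla(\rho_6^2)\cdot\nabla y\,y$ from the Laplacian; the only remainder from differentiating the weight is the purely temporal $\tfrac{d}{dt}(\rho_6^2)|y|^2$ term, handled by Remark~\ref{Weights}(b). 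You are invoking and then ``absorbing'' a term that is identically zero. Once these two points are corrected, the argument aligns with the paper's proof.
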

\begin{proof}
For each $n \geqslant 1,$ let $v_n(t,\,\cdot),\,f_n(t,\,\cdot)$ and $y_{0,n}(\cdot)$ be the projections of of $v(t,\,\cdot),\,f(t,\,\cdot)$ and $y_0(\cdot)$ in the first $n$ eigenfunctions for the Stokes operator $A: D(A) \rightarrow H,$ respectively. Let us denote by $y_n$ the corresponding solution for the finite dimensional approximate forced Stokes system. For simplicity, unless we state otherwise, we omit the subscript $n$ throughout the current proof. Moreover, we emphasize that we can take all of the constants $C$ appearing below to be independent of $n.$

Using $\rho_6^2y$ as a test function in system (\ref{Linear}), and doing some integrations by parts, we derive the identity
\begin{align} \label{1Adt1}
  \begin{split}
    \frac{1}{2}\frac{d}{dt}\left(\int_\Omega \rho_6^2 |y|^2dx \right) + \nu_0 \int_\Omega \rho_6^2 |\nabla y|^2 dx =&\, \int_\omega \rho_6^2 v\cdot y\ dx + \int_\Omega \rho_6^2 f\cdot y\ dx \\
    &+ \frac{1}{2}\int_\Omega\frac{d}{dt}\left(\rho_6^2 \right) |y|^2dx.
  \end{split}
\end{align}
From \eqref{eq:multmin_maj_max} and Remark \ref{Weights}, item (c), we have $\rho_6 \leqslant C\rho_4 \leqslant C\rho_3 \leqslant C\rho_0,$ whence
\begin{equation} \label{1Adt2}
    \int_\omega \rho_6^2 v\cdot y\ dx \leqslant C\left(\int_\omega \rho_4^2 |v|^2 dx + \int_\Omega \rho_3^2 |y|^2 dx \right),
\end{equation}
and
\begin{equation} \label{1Adt3}
    \int_\Omega \rho_6^2 f\cdot y\ dx \leqslant C\left(\int_\Omega \rho_0^2 |f|^2 dx + \int_\Omega\rho_3^2 |y|^2 dx \right).
\end{equation}
From Remark \ref{Weights}, item (b), we have $\left|\frac{d}{dt}(\rho_6^2)\right| \leqslant C\rho_3^2,$ from where it follows that
\begin{equation} \label{1Adt4}
    \int_\Omega \frac{d}{dt}\left( \rho_6^2 \right)|y|^2dx \leqslant C\int_\Omega \rho_3^2|y|^2 dx.
\end{equation}
Using (\ref{1Adt2}), (\ref{1Adt3}) and (\ref{1Adt4}) in (\ref{1Adt1}), and applying Gronwall's inequality together with (\ref{estimate0}), we infer (\ref{estimate1}). 

Henceforth, we will tacitly apply \eqref{eq:multmin_maj_max} and Remark \ref{Weights}.

Now, we use $\rho_7^2(y_t-\nu_0 A y)$ as a test function in (\ref{Linear}), from where we easily derive that
\footnotesize
\begin{equation} \label{1Adt5}
\begin{array}{l} \displaystyle
    \int_\Omega \rho_7^2\left(|y_t|^2 + \nu_0^2|\Delta y|^2 \right)dx + \frac{1}{2}\frac{d}{dt}\left(\int_\Omega \rho_7^2|\nabla y|^2 dx \right) = \int_\omega \rho_7^2 v\cdot(y_t-\nu_0A y)dx  
    \\ \noalign{\smallskip} \displaystyle
    \hspace{4.0cm} \ + \int_\Omega \rho_7^2 f\cdot(y_t-\nu_0A y)dx + \frac{1}{2}\int_\Omega \frac{d}{dt}\left( \rho_7^2 \right)|\nabla y|^2 dx.
\end{array}
\end{equation}
\normalsize
We observe that, for any $\epsilon>0,$
\begin{equation} \label{1Adt6}
    \int_\omega \rho_7^2 v\cdot(y_t-\nu_0A y)dx \leqslant \frac{C}{\epsilon}\int_\omega \rho_4^2|v|^2dx + C\epsilon\left[\int_\Omega \rho_7^2(|y_t|^2 + |\Delta y|^2)dx \right],
\end{equation}
\begin{equation} \label{1Adt7}
    \int_\Omega \rho_7^2 f\cdot(y_t-\nu_0A y)dx \leqslant \frac{C}{\epsilon}\int_\Omega \rho_0^2|f|^2dx + C\epsilon\left[\int_\Omega \rho_7^2(|y_t|^2 + |\Delta y|^2)dx \right],
\end{equation}
\begin{equation} \label{1Adt8}
    \int_\Omega \frac{d}{dt}\left( \rho_7^2\right)|\nabla y|^2 dx \leqslant C\int_\Omega \rho_6^2 |\nabla y|^2 dx.
\end{equation}
We take $\epsilon$ sufficiently small, in such a way that that the terms involving $y$ in (\ref{1Adt6}) and (\ref{1Adt7}) are absorbed by the left-hand side of (\ref{1Adt5}). Also, from (\ref{1Adt8}) and (\ref{estimate1}), the time integral of the third term in the right-hand side of (\ref{1Adt5}) is bounded by $C\kappa_0(y_0,f).$ Thus, it suffices to apply Gronwall's Lemma to conclude (\ref{estimate2}) for the Galerkin approximates $y_n$ instead of the actual solution $y.$ Employing standard limiting arguments, as $n \rightarrow \infty,$ we conclude that \eqref{estimate2} does hold for the actual solution $y.$
\end{proof}

\begin{lemma} \label{RegularControl}
\begin{itemize}
    \item[(a)] If $\zeta := \mu_{-1,1,0},$ then 
    $$
    \zeta v \in L^2(0,T;H^2(\omega)\cap H^1_0(\omega))\cap C(\left[0,T\right];V),\ (\zeta v)_t \in L^2(\left]0,T\right[\times\omega)^N,
    $$
    with the estimate
    $$
    \int_0^T \int_\omega \left[|(\zeta v)_t|^2 + \left|\zeta \Delta v\right|^2 \right]dx\ dt + \sup_{\left[0,T\right]}\|\zeta v\|_V^2 \leqslant C\kappa_0(y_0,f).
    $$
    \item[(b)] Let us also assume that $y_0 \in H^1_0(\Omega)^N.$ For $\widehat{\zeta} := \mu_{-1,1,5},$ we have the memberships
    $$
    (\widehat{\zeta} v_t)_t \in L^2(\left]0,T\right[\times \omega)^N,\ \widehat{\zeta} v_t \in L^2\left(0,T;\left[H^2(\omega)\cap H^1_0(\omega)\right]^N\right),
    $$
    $$
    \widehat{\zeta} v \in L^2\left(0,T;\left[H^4(\omega)\cap H^1_0(\omega)\right]^N\right),
    $$
    and the following inequality holds
    $$
    \int_0^T \int_\omega \left[\left|(\widehat{\zeta} v_t)_t\right|^2 + \left|\widehat{\zeta} \Delta v_t\right|^2 + \left|\widehat{\zeta} \Delta^2v\right|^2 \right] dx\ dt \leqslant C\kappa_1(y_0,f).
    $$
\end{itemize}
\end{lemma}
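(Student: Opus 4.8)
The plan is to transfer every regularity question about $v$ to the adjoint state and then exploit the structure of the weights. Recall from \eqref{PontryaginMinPrinciple} that $v = -\chi z$ with $z := \rho_4^{-2}\varphi$, while $y = \rho_3^{-2}(L^*\varphi + \nabla\pi)$; in particular $\varphi$ solves $L^*\varphi + \nabla\pi = \rho_3^2 y$, $\nabla\cdot\varphi = 0$, $\varphi|_\Sigma = 0$. Since $\rho_3,\rho_4$ depend only on $t$, the rescaled pair $(z,\widetilde{\pi}) := (\rho_4^{-2}\varphi,\rho_4^{-2}\pi)$ solves
\begin{equation*}
-z_t - \nu_0\Delta z + \nabla\widetilde{\pi} = F_0,\quad \nabla\cdot z = 0,\quad z|_\Sigma = 0,\quad z(T)=0,
\end{equation*}
with $F_0 := \rho_4^{-2}\rho_3^2 y - (\rho_4^{-2})_t\varphi$, the terminal condition being inherited from the fact that $\rho_4^{-2}$ and all its time derivatives vanish to infinite order at $t = T$. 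The crucial point is that the factor $\rho_4^{-2}$ carried by $z$ leaves enough room so that, after multiplication by $\zeta$ or $\widehat{\zeta}$ (which themselves blow up as $t\uparrow T$), all quantities that arise are dominated --- via Remark \ref{Weights} and \eqref{eq:multmin_maj_max} for a sufficiently small $\gamma$ --- by the objects already under control through \eqref{estimate0}, Corollary \ref{Carleman2}, and Lemma \ref{LemmaAdditional1}; it is the ``$\rho_3$-type'' and ``$\rho_0^{-1}$-type'' a priori bounds that absorb them. Finally, since $\chi\in C^\infty_c(\omega)$ has all its derivatives supported in a fixed compact subset of $\omega$, the Leibniz rule reduces every assertion for $v=-\chi z$ to an interior assertion for $z$, and membership in $H^1_0(\omega)$ (and in $H^k(\omega)\cap H^1_0(\omega)$) comes for free. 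As in the proof of Lemma \ref{LemmaAdditional1}, I would first write each weighted energy identity below for the Galerkin truncations of the adjoint system --- for which the $t=T$ boundary terms vanish identically because of the behaviour of $\rho_4^{-2}$ there --- with constants independent of the truncation index, and then pass to the limit.

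For part (a), using \eqref{estimate0} and Corollary \ref{Carleman2} one checks that $G := \zeta F_0 - \zeta_t z\in L^2(Q)^N$ with $\|G\|^2_{L^2(Q)}\leqslant C\kappa_0(y_0,f)$ (the piece $\zeta\rho_4^{-2}\rho_3^2 y$ being dominated by $\rho_3 y$, and $\zeta(\rho_4^{-2})_t\varphi$, $\zeta_t z$ by $\rho_0^{-1}\varphi$). Hence $W := \zeta z$ solves a backward Stokes system with $L^2(Q)^N$ source and vanishing terminal datum, and the usual parabolic estimates (testing against $W$, then against $AW$, and integrating in time) yield $W\in L^2(0,T;[H^2(\Omega)\cap V]^N)\cap H^1(0,T;H)\hookrightarrow C([0,T];V)$ with norms bounded by $C\kappa_0(y_0,f)^{1/2}$. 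Writing $\zeta v = -\chi W$, $(\zeta v)_t = -\chi W_t$, and $\zeta\Delta v = -(\chi\Delta W + 2\nabla\chi\cdot\nabla W + (\Delta\chi)W)$ then gives the claims in (a).

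For part (b), assume in addition $y_0\in H^1_0(\Omega)^N$, so that Lemma \ref{LemmaAdditional1} supplies \eqref{estimate1}--\eqref{estimate2}, i.e. weighted bounds on $y,\nabla y,y_t,\Delta y$ with right-hand side $\kappa_1(y_0,f)$. Differentiating the system for $z$ in time gives $-z_{tt}-\nu_0\Delta z_t + \nabla\widetilde{\pi}_t = (F_0)_t$, and a computation of the same nature --- now also invoking part (a) --- shows $\widehat{\zeta}(F_0)_t\in L^2(Q)^N$ with $\|\widehat{\zeta}(F_0)_t\|^2_{L^2(Q)}\leqslant C\kappa_1(y_0,f)$, the delicate pieces being $\widehat{\zeta}\rho_4^{-2}\rho_3^2 y_t$ (dominated by $\rho_7 y_t$), $\widehat{\zeta}(\rho_4^{-2})_t\varphi_t$ (dominated by $\rho_2^{-1}\varphi_t$), and the terms carrying time derivatives of the weights. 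I would then run the energy method on this differentiated system in two stages. \emph{Stage 1:} test against $\widehat{\zeta}_-^2 z_t$, where $\widehat{\zeta}_- := \mu_{-1,1,5/2}$ is a slightly heavier auxiliary weight; by Remark \ref{Weights}(b) the troublesome term $\int(\widehat{\zeta}_-^2)_t|z_t|^2$ is controlled by $C\int\zeta^2|z_t|^2$, which is finite by part (a), and one obtains $\widehat{\zeta}_- z_t\in L^\infty(0,T;H)$ and $\widehat{\zeta}_-\nabla z_t\in L^2(Q)^N$. \emph{Stage 2:} test against $\widehat{\zeta}^2(z_{tt}-\nu_0 A z_t)$; now $|(\widehat{\zeta}^2)_t|\leqslant C\widehat{\zeta}_-^2$, so the weight-derivative term is absorbed by Stage 1, yielding $\widehat{\zeta} z_{tt},\widehat{\zeta}\Delta z_t\in L^2(Q)^N$ and $\widehat{\zeta}\nabla z_t\in L^\infty(0,T;H)$, all with norms $\leqslant C\kappa_1(y_0,f)^{1/2}$. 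Since $\widehat{\zeta}\leqslant C\widehat{\zeta}_-$ and $z_t|_\Sigma=0$, the Stokes resolvent equation for $z_t$ upgrades this to $\widehat{\zeta} z_t\in L^2(0,T;[H^2(\Omega)\cap V]^N)$, and expanding $\widehat{\zeta} v_t = -\chi\widehat{\zeta} z_t$, $\widehat{\zeta}\Delta v_t$, and $(\widehat{\zeta} v_t)_t = -\chi(\widehat{\zeta}_t z_t+\widehat{\zeta} z_{tt})$ (the term $\widehat{\zeta}_t z_t$ handled by $|\widehat{\zeta}_t|\leqslant C\zeta$ and part (a)) gives the first two of the three stated $L^2$-memberships.

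For the fourth-order spatial estimate, taking the divergence in the system for $z$ and using $\nabla\cdot z = \nabla\cdot z_t = \nabla\cdot F_0 = 0$ shows that $\widetilde{\pi}(t,\cdot)$ is harmonic in $\Omega$, hence $C^\infty$ in the interior with all spatial derivatives locally estimated by $\|\widetilde{\pi}\|_{L^2(\Omega)}$. Rewriting the system as the elliptic identity $-\nu_0\Delta z = F_0 + z_t - \nabla\widetilde{\pi}$ and noting that, on a neighbourhood of $\spt\chi\Subset\omega\Subset\Omega$, both $F_0$ and $z_t$ possess two weighted spatial derivatives --- the former via \eqref{estimate2} and interior regularity for $y$, and Corollary \ref{Carleman2} and interior regularity for $\varphi$; the latter by Stage 2 --- interior elliptic regularity upgrades $\widehat{\zeta} z$ to $L^2(0,T;H^4)$ on that neighbourhood, whence, by the Leibniz rule, $\widehat{\zeta} v\in L^2(0,T;[H^4(\omega)\cap H^1_0(\omega)]^N)$ and $\widehat{\zeta}\Delta^2 v\in L^2(Q)^N$ with norm $\leqslant C\kappa_1(y_0,f)^{1/2}$. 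The main obstacle I anticipate is essentially bookkeeping: since $\zeta$, $\widehat{\zeta}$ and above all their time derivatives blow up exponentially as $t\uparrow T$, at each step one must carefully pinpoint which a priori estimate --- from \eqref{estimate0}, Lemma \ref{LemmaAdditional1}, Corollary \ref{Carleman2}, or the already-proved part (a) --- has enough exponential room to absorb the weight at hand, and it is precisely this that dictates the choice of the intermediate weight $\widehat{\zeta}_-$ and the smallness of $\gamma$ in \eqref{eq:multmin_maj_max}; the remaining care is the routine (but not automatic) justification of the time-differentiated energy identities via Galerkin truncation and of the interior-regularity step, whose saving grace is the spatial harmonicity of the rescaled pressure.
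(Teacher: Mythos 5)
Your argument is essentially the same as the paper's: rewrite $z=\rho_4^{-2}\varphi$ as the solution of a backward Stokes system, multiply by $\zeta$ (resp.\ $\widehat{\zeta}$) to obtain a Stokes system for $\zeta z$ (resp.\ $\widehat{\zeta} z_t$) whose source is controlled by the already-established weighted bounds, and push the conclusion to $v=-\chi z$ by Leibniz on $\chi\in C_c^\infty(\omega).$ Your source $G=\zeta F_0-\zeta_t z$ coincides (after regrouping) with the paper's $h=-\tfrac{d}{dt}(\mu_{-9,7,-64})\varphi+\mu_{-5,5,-34}y,$ and the weight comparisons you make ($\mu_{-5,5,-34}\leqslant C\rho_3,$ $|\tfrac{d}{dt}\mu_{-9,7,-64}|\leqslant C\rho_0^{-1},$ etc.) are exactly the ones the paper records.

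Where you diverge is in presentation rather than substance. The paper, once the source is identified as $L^2,$ simply cites maximal regularity for the Stokes system (for $\zeta z,$ then $\widehat{\zeta}z_t,$ then $\widehat{\zeta}\Delta z$) and moves on; you instead re-derive the parabolic regularity by an explicit Galerkin two-stage energy method, introducing an intermediate weight $\widehat{\zeta}_-=\mu_{-1,1,5/2}$ to absorb the time derivative $(\widehat{\zeta}^2)_t.$ That is unnecessary if one takes Stokes maximal regularity as a black box, but it is a clean and correct bookkeeping device if one prefers self-contained energy arguments. For the fourth-order spatial membership you also take a slightly different route: rather than treating $L^*(\widehat{\zeta}\Delta z)$ as one more weighted Stokes system, you observe that $\widetilde{\pi}=\rho_4^{-2}\pi$ is harmonic in $\Omega$ (because $z,$ $z_t,$ and $F_0$ are divergence-free), so that taking the Laplacian eliminates the pressure and the desired $H^4_{\mathrm{loc}}$ regularity near $\spt\chi$ follows from interior elliptic estimates on $-\nu_0\Delta z=F_0+z_t-\nabla\widetilde{\pi}$; this is equivalent to what the paper does but makes the (otherwise implicit) role of the pressure's harmonicity explicit, and is arguably more transparent since $\Delta z|_\Sigma\neq 0$ rules out a global parabolic estimate anyway. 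One small point of care you gloss over: when listing the ``delicate pieces'' of $\widehat{\zeta}(F_0)_t$ you omit the terms $(\rho_4^{-2}\rho_3^2)_t y$ and $(\rho_4^{-2})_{tt}\varphi,$ but these are dominated by $\rho_3 y$ and $\rho_0^{-1}\varphi$ by the same Remark~\ref{Weights}(b)--(c) computations, so nothing breaks.
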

\begin{proof}
\textit{(a)} For $p,q,r \in \mathbb{R},$ we notice that
$$
L^*(\mu_{p,q,r}z) = -\frac{d}{dt}\left(\mu_{p-8,q+6,r-64}\right)\varphi + \mu_{p-4,q+4,r-34}y - \mu_{p-8,q+6,r-64}\nabla \pi.
$$
Choosing $p=-1,$ $q=1,$ and $r=0,$ it follows that
$$
\left| \frac{d}{dt}\left(\mu_{p-8,q+6,r-64} \right)\right| \leqslant C\rho_0^{-1},\ \mu_{p-4,q+4,r-34} \leqslant C\rho_3,\ \mu_{p-8,q+6,r-64} \leqslant C.
$$
Thus, $u:= \zeta z$ and $\widetilde{\pi} := \mu_{-9,7,-64}\pi$ solve the Stokes equation
$$
\begin{cases}
Lu + \nabla \widetilde{\pi} = h, \text{ in } Q, \\
\nabla\cdot u = 0, \text{ in } Q, \\
u = 0, \text{ on } \Sigma, \\
u(T) = 0, \text{ in } \Omega,
\end{cases}
$$
where 
$$
h := -\frac{d}{dt}\left(\mu_{-9,7,-64}\right)\varphi + \mu_{-5,5,-34}y \in L^2(Q)^N.
$$
By standard regularity results for solutions of the Stokes system, we can infer the stated regularity for $\zeta v = -\chi u.$

\textit{(b)} As in the previous item, for $p,q,r \in \mathbb{R},$ we derive
$$
\begin{array}{l} \displaystyle
    L^*(\mu_{p,q,r}z_t) = -\varphi\frac{d}{dt}\left[\mu_{p,q,r}\frac{d}{dt}(\mu_{-8,6,-64}) \right] + y\mu_{p+4,q-2,r+64}\frac{d}{dt}(\mu_{-8,6,-64})
    \\ \noalign{\smallskip} \displaystyle
    \hspace{2.0cm} \ -\ \frac{d}{dt}(\mu_{p-8,q+6.r-64})\varphi_t + \mu_{p-4,q+4,r}y_t + \mu_{p-8,q+6,r-64}\frac{d}{dt}(\mu_{4,-2,64}) y
    \\ \noalign{\smallskip} \displaystyle
    \hspace{2.0cm} \ -\ \mu_{p-8,q+6,r-64}\nabla \pi_t.
\end{array}
$$
For the choice $p=-1, q=1, r=5,$ it is straightforward to check the inequalities
$$
\left|\frac{d}{dt}\left[\mu_{p,q,r}\frac{d}{dt}(\mu_{-8,6,-64}) \right] \right| \leqslant C\mu_{p-8,q+7,r-58}\rho_0^{-1} = C\mu_{-9,8,-53}\rho_0^{-1} \leqslant C\rho_0^{-1}, 
$$
$$
\left|\mu_{p+4,q-2,r+64}\frac{d}{dt}(\mu_{-8,6,-64}) \right| \leqslant C\mu_{p-14,q+6,r-152}\rho_3 = C\mu_{-15,7,-147}\rho_3 \leqslant C\rho_3,
$$
$$
\left|\frac{d}{dt}(\mu_{p-8,q+7,r-64}) \right|  \leqslant C\mu_{p-8,q+7,r-71}\rho_2 = C\mu_{-9,8,-66}\rho_2 \leqslant C\rho_2,
$$
$$
\mu_{p-4,q+4,r} = \mu_{p-6,q+5,r-20} \rho_7 = \mu_{-7,6,-15}\rho_7 \leqslant C\rho_7,
$$
$$
\left|\mu_{p-8,q+6,r-64}\frac{d}{dt}(\mu_{4,-2,64}) \right| \leqslant C\mu_{p-6,q+5,r-37}\rho_3 = C\mu_{-7,6,-32}\rho_3 \leqslant C\rho_3,
$$
and
$$
\mu_{p-8,q+6,r-64} = \mu_{-9,7,-59} \leqslant C.
$$
We can conclude by arguing similarly as in the first two memberships and corresponding estimates. The third ones are obtained doing the same analysis for the term $L^*(\zeta \Delta z).$
\end{proof}

\begin{lemma}
Let us set $\rho_8 := \zeta = \mu_{-1,1,0}$ and $\rho_9 := \mu_{-1,1,\frac{5}{2}}.$ Supposing $y_0 \in H^2(\Omega)^N\cap V,$ $Ay_0 \in \left[H^1_0(\Omega)\right]^N,$ $\rho_8f_t \in L^2(Q)^N,$ we have the following estimates:
\begin{equation} \label{estimate3}
    \sup_{\left[0,T\right]}\left(\int_\Omega \rho_8^2|y_t|^2dx \right) + \int_Q \rho_8^2|\nabla y_t|^2 d(t,x) \leqslant C\kappa_2(y_0,f).
\end{equation}
If furthermore $y_0 \in H^3(\Omega)^N,$ $f(0) \in H^1_0(\Omega)^N,$
\begin{equation} \label{estimate4}
    \int_Q \rho_9^2\left(|y_{tt}|^2 + |\Delta y_t|^2 \right)d(t,x) + \sup_{\left[0,T\right]}\left[\int_Q \rho_9^2\left(|\nabla y_t|^2 + |\Delta y|^2 \right)dx \right] \leqslant C\kappa_3(y_0,f),
\end{equation}
where
$$
\kappa_2(y_0,f):= \|y_0\|_{H^2(\Omega)^N}^2 + \int_Q\rho_0^2 |f|^2d(t,x) + \int_Q \rho_8^2|f_t|^2 d(t,x)
$$
and
$$
\kappa_3(y_0,f) := \|y_0\|_{H^3(\Omega)^N}^2 + \int_Q\rho_0^2 |f|^2d(t,x) + \int_Q \rho_8^2|f_t|^2 d(t,x) + \|f(0)\|_{H^1_0(\Omega)^N}^2. 
$$
\end{lemma}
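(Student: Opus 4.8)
The plan is to mimic the structure of the proof of the previous Lemma, proceeding by Galerkin approximation and obtaining the two estimates by testing the linearized system against appropriate weighted test functions built from higher time derivatives of $y$. Throughout, all constants will be taken independent of the Galerkin parameter $n$, and we will tacitly use \eqref{eq:multmin_maj_max} and Remark \ref{Weights}. The starting point for \eqref{estimate3} is to differentiate the equation $Ly+\nabla p = \chi_\omega v + f$ in time, so that $y_t$ solves a Stokes system with source $(\chi_\omega v)_t + f_t$. Testing this differentiated equation against $\rho_8^2 y_t$ and integrating by parts gives an identity of the same shape as \eqref{1Adt1}, namely
\begin{align*}
  \frac{1}{2}\frac{d}{dt}\left(\int_\Omega \rho_8^2|y_t|^2dx\right) + \nu_0\int_\Omega \rho_8^2|\nabla y_t|^2 dx =&\, \int_\omega \rho_8^2 (\chi v)_t\cdot y_t\, dx + \int_\Omega \rho_8^2 f_t\cdot y_t\, dx \\
  &+ \frac{1}{2}\int_\Omega \frac{d}{dt}\left(\rho_8^2\right)|y_t|^2 dx.
\end{align*}
The first term on the right is controlled using part (a) of Lemma \ref{RegularControl}: since $\zeta = \rho_8$, the quantity $(\zeta v)_t = (\rho_8 v)_t$ is in $L^2(\left]0,T\right[\times\omega)^N$, and writing $\rho_8 (\chi v)_t = (\rho_8 \chi v)_t - \tfrac{d}{dt}(\rho_8)\chi v$ together with $|\tfrac{d}{dt}\rho_8| \leqslant C\mu_{-1,1,-5} \leqslant C\rho_4$ (up to adjusting weights) lets us bound it by $C\kappa_0(y_0,f)$ plus a small multiple of $\int_\Omega \rho_8^2|y_t|^2$. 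The term with $f_t$ is handled by Young's inequality against $\int_Q \rho_8^2|f_t|^2$, which is exactly the new piece appearing in $\kappa_2$; and $|\tfrac{d}{dt}(\rho_8^2)| \leqslant C\rho_9^2 \leqslant C\rho_7^2$ (by Remark \ref{Weights}(b),(c)), so the last term is absorbed using \eqref{estimate2} from the previous lemma after integrating in time. A Gronwall argument — whose initialization requires knowing $\rho_8(0)^2\|y_t(0)\|^2 \leqslant C\kappa_2$, which follows since $y_t(0) = \nu_0\Delta y_0 + \chi_\omega v(0) + f(0)$ is controlled by $\|y_0\|_{H^2}$ plus lower-order data, using the compatibility condition $Ay_0 \in [H^1_0(\Omega)]^N$ and the regularity of $v$ from Lemma \ref{RegularControl}(a) — then yields \eqref{estimate3} for $y_n$, and a limiting argument passes to the actual solution.

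For \eqref{estimate4} the strategy is two-fold. First, to get the $\rho_9^2(|y_{tt}|^2+|\Delta y_t|^2)$ and $\sup \int \rho_9^2|\nabla y_t|^2$ bounds, we test the time-differentiated equation against $\rho_9^2(y_{tt} - \nu_0 A y_t)$, producing an identity of the shape of \eqref{1Adt5} with $y$ replaced by $y_t$, $v$ by $v_t$, $f$ by $f_t$, and $\rho_7$ by $\rho_9$. The control term $\int_\omega \rho_9^2 (\chi v)_t\cdot(y_{tt}-\nu_0 A y_t)\,dx$ is now bounded using part (b) of Lemma \ref{RegularControl}: there $\widehat{\zeta} = \mu_{-1,1,5}$ and the estimate controls $(\widehat{\zeta} v_t)_t$ and $\widehat{\zeta}\Delta v_t$ in $L^2$ — after checking $\rho_9 \leqslant C\widehat{\zeta}$ (compare exponents: both have $\mu_{-1,1,\cdot}$, and $5/2 \leqslant 5$, so Remark \ref{Weights}(c) applies) we split $\rho_9 (\chi v)_t$ appropriately and use Young's inequality with a small parameter $\epsilon$ to absorb the $y_t$-terms into the left-hand side. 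The $f_t$-term is again absorbed against $\int_Q\rho_8^2|f_t|^2$ (noting $\rho_9 \leqslant C\rho_8$), and the term $\int_\Omega \tfrac{d}{dt}(\rho_9^2)|\nabla y_t|^2\,dx \leqslant C\int_\Omega \rho_8^2|\nabla y_t|^2\,dx$ is bounded in time by $C\kappa_2$ using \eqref{estimate3}. Gronwall then gives the $y_{tt}$, $\Delta y_t$ and $\sup\int\rho_9^2|\nabla y_t|^2$ parts of \eqref{estimate4}; the initialization now needs $\rho_9(0)^2\|\nabla y_t(0)\|^2$ finite, which is where $y_0 \in H^3(\Omega)^N$ and $f(0)\in H^1_0(\Omega)^N$ enter, via $y_t(0) = \nu_0\Delta y_0 + \chi_\omega v(0) + f(0) \in H^1_0(\Omega)^N$. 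Finally, the missing $\sup\int\rho_9^2|\Delta y|^2$ bound is recovered a posteriori: from the undifferentiated equation $\Delta y = \nu_0^{-1}(y_t - \chi_\omega v - f + \nabla p)$, so $\rho_9\Delta y$ is controlled pointwise in time by $\rho_9 y_t$, $\rho_9 v$, $\rho_9 f$ and the pressure term — the first is handled by the $\sup\int\rho_9^2|\nabla y_t|^2$ estimate just proved together with a Poincaré-type argument (or more directly, one can test against $\rho_9^2 Ay$ and repeat the $\rho_7$-argument of \eqref{1Adt5} with the improved right-hand side), the second and third against $\kappa_0$ and $\kappa_3$, and the pressure is eliminated by applying the Leray projector as usual for the Stokes operator.

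The main obstacle I anticipate is not any single estimate but the bookkeeping of weights: at each step one must verify, via the arithmetic of Remark \ref{Weights}(a)--(c), that every weight produced by a time derivative $\tfrac{d}{dt}(\rho_j^2)$ or by the constitutive structure of $L^*$ dominates (up to $C$) the weight appearing on the left-hand side of the \emph{previously} established estimate, so that the induction closes. In particular one must confirm the chain of comparisons $\rho_9 \leqslant C\rho_8 \leqslant C\rho_7 \leqslant C\rho_6 \leqslant C\rho_4 \leqslant C\rho_3 \leqslant C\rho_0$ and that $|\tfrac{d}{dt}(\rho_9^2)| \leqslant C\rho_8^2$, $|\tfrac{d}{dt}(\rho_8^2)| \leqslant C\rho_7^2$; each such inequality reduces, by Remark \ref{Weights}(c), to an identity (or strict inequality) among the coefficients of $\alpha_1$ and $\alpha_2$ plus a comparison of the powers of $l$, and these are all of the same elementary nature as those already carried out in the proof of Lemma \ref{RegularControl}. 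The secondary subtlety is the treatment of the initial-time terms in the Gronwall arguments, which is precisely what forces the successively stronger hypotheses $y_0 \in H^2$, $Ay_0 \in [H^1_0]^N$, then $y_0\in H^3$, $f(0)\in H^1_0$: one must trace through $y_t(0) = \nu_0\Delta y_0 + \chi_\omega v(0) + f(0)$ and use the continuity-in-time of $\zeta v$ into $V$ from Lemma \ref{RegularControl}(a) to make sense of $v(0)$ and bound it by $\kappa_0(y_0,f)^{1/2}$.
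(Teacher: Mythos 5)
Your overall strategy — differentiate the equation in time and test against $\rho_8^2 y_t$ for \eqref{estimate3}, then against $\rho_9^2(y_{tt}-\nu_0 A y_t)$ for the first half of \eqref{estimate4}, then recover the $\Delta y$ bound separately — is the same as the paper's. However, there is a genuine error in the weight bookkeeping that you yourself flagged as the main risk, and it breaks the proof of \eqref{estimate4} as written. You claim $\rho_9 \leqslant C\widehat{\zeta}$ ``since $5/2\leqslant 5$,'' and then use part (b) of Lemma~\ref{RegularControl} to control the term $\int_\omega \rho_9^2 (\chi v)_t\cdot(y_{tt}-\nu_0 A y_t)$. But Remark~\ref{Weights}(c), applied to $\mu_{-1,1,5/2}$ and $\mu_{-1,1,5}$ (same $\alpha$-coefficients), says that the inequality goes in the direction of the \emph{larger} power of $l$: since $5\geqslant 5/2$ it gives $\widehat{\zeta}=\mu_{-1,1,5}\leqslant C\mu_{-1,1,5/2}=\rho_9$, the opposite of what you assert. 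So $\rho_9^2 v_t$ cannot be absorbed into $\widehat{\zeta}^2|v_t|^2$. What actually works, and what the paper does, is the comparison $\rho_9\leqslant C\zeta=C\rho_8$ (here $5/2\geqslant 0$, so Remark~\ref{Weights}(c) does apply), together with part (a) — not (b) — of Lemma~\ref{RegularControl}; the identity $\zeta v_t=(\zeta v)_t-\zeta_t v$ then closes the estimate. (The same reversal appears in your intermediate claim $|\tfrac{d}{dt}(\rho_8^2)|\leqslant C\rho_9^2$, which is also false for the same reason; there it is harmless because the inequality you ultimately use, $|\tfrac{d}{dt}(\rho_8^2)|\leqslant C\rho_7^2$, is correct.) Part (b) of Lemma~\ref{RegularControl} is not needed at this stage of the argument at all; it is reserved for Lemma~\ref{LemmaAdditional3}.

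The second gap is in your recovery of $\sup_t\int_\Omega\rho_9^2|\Delta y|^2$. Your pointwise approach ($\Delta y=\nu_0^{-1}(y_t-\chi_\omega v-f+\nabla p)$ followed by the Leray projector) would require a pointwise-in-time $L^2$ bound on $\rho_9 f$, which is not provided by the hypotheses ($\rho_0 f$ and $\rho_8 f_t$ are only in $L^2(Q)$, and deducing an $L^\infty_t L^2_x$ control for $f$ with the right weight is nontrivial and is not part of $\kappa_3$). Your fallback suggestion — test against $\rho_9^2 A y$ and repeat the argument of \eqref{1Adt5} — would produce $\sup_t\int\rho_9^2|\nabla y|^2$, not $|\Delta y|^2$. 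The paper instead tests the \emph{undifferentiated} equation against $\rho_9^2\Delta y_t$: after integration by parts this yields $\int_\Omega\rho_9^2|\nabla y_t|^2\,dx+\tfrac{\nu_0}{2}\tfrac{d}{dt}\int_\Omega\rho_9^2|\Delta y|^2\,dx$ on the left, the pressure term drops out by divergence-freeness, and the right side is controlled by quantities already in $\kappa_3$ together with the previously proved $L^2_tL^2_x$ bound on $\rho_9\Delta y_t$. You should adopt this cleaner energy argument rather than the pointwise one.

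Everything else — the structure of the Gronwall arguments, the identification of the initial-time terms $\|y_t(0)\|^2$ and $\|\nabla y_t(0)\|^2$, and the role of the compatibility conditions — matches the paper.
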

\begin{proof}
We establish the current estimates by following the same approach as in the proof of Lemma \ref{LemmaAdditional1}. Here, we begin by differentiating the system (\ref{Linear}) with respect to time, and we use $\rho_8^2 y_t$ as a test function:
\begin{align} \label{2Adt1}
  \begin{split}
    \frac{1}{2}\frac{d}{dt}\left(\int_\Omega \rho_8^2|y_t|^2 dx \right) + \nu_0 \int_\Omega \rho_8^2 |\nabla y_t|^2 dx =&\, \int_\omega \rho_8^2 v_t\cdot y_t dx + \int_\Omega \rho_8^2 f_t\cdot y_t dx \\
    &+ \frac{1}{2}\int_\Omega \frac{d}{dt}(\rho_8^2)|y_t|^2 dx.
  \end{split}
\end{align}
We note that
$$
\left| \frac{d}{dt}(\rho_8^2) \right| \leqslant C\rho_7^2,\ \rho_8 \leqslant C\rho_7 \leqslant C\rho_4;
$$
hence,
\begin{equation} \label{2Adt2}
    \int_\omega \rho_8^2 v_t\cdot y_t dx \leqslant C\left[\int_\omega\left(|\rho_4 v|^2 + |(\zeta v)_t|^2 \right)dx + \int_\Omega \rho_7^2 |y_t|^2 dx  \right],
\end{equation}
\begin{equation} \label{2Adt3}
    \int_\Omega \rho_8^2 f_t \cdot y_t dx \leqslant C\left(\int_\Omega \rho_8^2 |f_t|^2 dx + \int_\Omega \rho_7^2|y_t|^2 dx \right).
\end{equation}
By Lemmas \ref{LemmaAdditional1} and \ref{RegularControl},
$$
\int_Q\rho_7^2 |y_t|^2  d(t,x) + \int_0^T\int_\omega\left(|\rho_4 v|^2 + |(\zeta v)_t|^2 \right)dx\ dt \leqslant \kappa_1(y_0,f),
$$
so that by using (\ref{2Adt2}) and (\ref{2Adt3}) in (\ref{2Adt1}), then integrating in time and applying of Gronwall's lemma, it follows that
\small
\begin{align} \label{2Adt4}
  \begin{split}
    \sup_{\left[0,T\right]}\left(\int_\Omega \rho_8^2|y_t|^2 dx\right) + \int_Q \rho_8^2|\nabla y_t|^2 d(t,x) \leqslant C&\left(\|y_t(0)\|_{L^2(Q)^N}^2 +\int_Q \rho_8^2|f_t|^2d(t,x) \right.\\
    &\hspace{1.0cm} + \kappa_1(y_0,f) \Big).
  \end{split}
\end{align}
\normalsize
It is simple to infer the subsequent estimate:
\begin{equation} \label{2Adt5}
    \|y_t(0)\|_{L^2(Q)^N}^2 \leqslant \|y_0\|_{H^2(\Omega)^N}^2 + \|f(0)\|_{L^2(Q)^N}^2 + \|v(0)\|_{L^2(\left]0,T\right[\times\omega)^N}^2 \leqslant \kappa_2(y_0,f).
\end{equation}
Relations (\ref{2Adt4}) and (\ref{2Adt5}) imply (\ref{estimate3}).

Next, we use $\rho_9^2\left(y_{tt}-\nu_0A y_t\right)$ as a test function in the system (\ref{Linear}) differentiated with respect to time to deduce
\footnotesize
\begin{equation} \label{2Adt6}
    \begin{array}{l} \displaystyle
    \int_\Omega \rho_9^2 \left(|y_{tt}|^2 + \nu_0^2|\Delta y_t|^2 \right)dx + \nu_0\frac{d}{dt}\left( \int_\Omega \rho_9^2 |\nabla y_t|^2 dx \right) = \int_\omega \rho_9^2 v_t\cdot (y_{tt}-\nu_0A y_t) dx 
    \\ \noalign{\smallskip} \displaystyle
    \hspace{4.2cm} \ + \int_\Omega \rho_9^2 f_t\cdot (y_{tt}-\nu_0A y_t)dx + \nu_0\int_\Omega \frac{d}{dt}\left(\rho_9^2 \right)|\nabla y_t|^2 dx 
    \end{array}
\end{equation}
\normalsize
We observe that 
\begin{equation} \label{2Adt7}
    \int_\Omega \frac{d}{dt}(\rho_9^2) |\nabla y_t|^2dx \leqslant C\int_\Omega \rho_8^2|\nabla y_t|^2 dx,
\end{equation}
and for each $\epsilon > 0,$
\begin{equation} \label{2Adt8}
    \int_\omega \rho_9^2 v_t\cdot (y_{tt}-\nu_0A y_t)dx \leqslant C\left[ \frac{1}{\epsilon} \int_\omega \zeta^2 |v_t|^2 dx + \epsilon \int_\Omega \rho_9^2\left( |y_{tt}|^2 + |\Delta y_t|^2 \right) dx \right],
\end{equation}
as well as
\begin{equation} \label{2Adt9}
    \int_\Omega \rho_9^2f_t \cdot (y_{tt}-\nu_0A y_t)dx \leqslant C\left[\frac{1}{\epsilon}\int_\Omega \rho_8^2|f_t|^2 dx + \epsilon\int_\Omega \rho_9^2(|y_{tt}|^2 + |\Delta y_t|^2)dx \right].
\end{equation}
We fix a sufficiently small $\epsilon,$ whence the second terms within the brackets in the right-hand sides of (\ref{2Adt8}) and (\ref{2Adt9}) are absorbed by the left-hand side of (\ref{2Adt6}). Then, using (\ref{2Adt7}) in (\ref{2Adt6}), we infer
\footnotesize
\begin{equation} \label{2Adt10}
    \begin{array}{l} \displaystyle
    \int_\Omega \rho_9^2 \left(|y_{tt}|^2 + |\Delta y_t|^2 \right)dx + \frac{d}{dt}\left( \int_\Omega \rho_9^2 |\nabla y_t|^2 dx \right) \leqslant C\bigg(\int_\omega \zeta^2 |v_t|^2 dx + \int_\Omega \rho_8^2 |f_t|^2dx
    \\ \noalign{\smallskip} \displaystyle
    \hspace{8.0cm} \ + \int_\Omega \rho_8^2 |\nabla y_t|^2 dx \bigg).
    \end{array}
\end{equation}
\normalsize
Employing Gronwall's lemma in (\ref{2Adt10}), we obtain
\footnotesize
\begin{equation} \label{2Adt11}
    \int_Q \rho_9^2 (|y_{tt}|^2 + |\Delta y_t|^2)d(t,x) + \sup_{\left[0,T\right]}\left(\int_\Omega \rho_9^2|\nabla y_t|^2 dx \right) \leqslant C\left(\|\nabla y_t(0)\|_{L^2(Q)^N}^2 + \kappa_2(y_0,f) \right).
\end{equation}
\normalsize
We easily establish, with the aid of item (a) of Lemma \ref{RegularControl}, that
\small
$$
\|\nabla y_t(0)\|_{L^2(Q)^N}^2 \leqslant C\left(\|y(0)\|_{H^3(Q)^N}^2 + \|\nabla v(0)\|_{L^2(Q)^N}^2 + \|\nabla f(0)\|_{L^2(Q)^N}^2 \right) \leqslant C\kappa_3(y_0,f), 
$$
\normalsize
whence
\begin{equation} \label{2Adt11AndAHalf}
    \int_Q \rho_9^2 (|y_{tt}|^2 + |\Delta y_t|^2)d(t,x) + \sup_{\left[0,T\right]}\left(\int_\Omega \rho_9^2|\nabla y_t|^2 dx \right) \leqslant C\kappa_3(y_0,f).
\end{equation}

Finally, we use $\rho_9^2\Delta y_t$ in the undifferentiated partial differential equation of system (\ref{Linear}) as a test function to get
\footnotesize
\begin{equation} \label{2Adt12}
    \begin{array}{l} \displaystyle
    \int_\Omega \rho_9^2 |\nabla y_t|^2 dx + \frac{\nu_0}{2}\frac{d}{dt}\left(\int_\Omega \rho_9^2 |\Delta y|^2 dx \right) 
    \\ \noalign{\smallskip} \displaystyle
    \hspace{2.0cm} \ = \int_\omega \rho_9^2 v\cdot \Delta y_t dx + \int_\Omega \rho_9^2 f\cdot \Delta y_t dx + \frac{\nu_0}{2}\int_\Omega \frac{d}{dt}(\rho_9^2)|\Delta y|^2 dx 
    \\ \noalign{\smallskip} \displaystyle
    \hspace{2.0cm} \ \leqslant C\left(\int_\omega \rho_4^2 |v|^2 dx + \int_\Omega \rho_0^2 |f|^2 dx + \int_\Omega \rho_9^2 |\Delta y_t|^2 dx + \int_\Omega \rho_7^2|\Delta y|^2 dx\right).
    \end{array}
\end{equation}
\normalsize
We use Gronwall's lemma in (\ref{2Adt12}), in such a way that
\begin{equation} \label{2Adt13}
    \sup_{\left[0,T\right]} \left(\int_\Omega \rho_9^2 |\Delta y|^2 dx \right) \leqslant C\kappa_3(y_0,f).    
\end{equation}

From the estimates (\ref{2Adt11AndAHalf}) and (\ref{2Adt13}), together with the compatibility condition $Ay_0 \in \left[H^1_0(\Omega)\right]^N,$ we derive (\ref{estimate4}).
\end{proof}

\begin{lemma} \label{LemmaAdditional3}
We write $\rho_{10}:= \widehat{\zeta} = \mu_{-1,1,5}$ and $\rho_{11} := \mu_{-1,1,15/2}.$ Let us assume that $y_0 \in H^4(\Omega)^N\cap V,$ $Ay_0,\,A^2y_0 \in \left[H^1_0(\Omega)\right]^N,$ $\rho_9 \Delta f \in L^2(Q)^N,$ $\rho_{10} f_{tt} \in L^2(Q)^N,$ $\rho_{10}f_t \in L^2\left(0,T; H^1_0(\Omega)^N\right),$ $f(0) \in \left[H^2(\Omega)\cap H^1_0(\Omega)\right]^N$ and $f_t(0)\in L^2(\Omega).$ Then, the following estimate holds
\small
\begin{equation} \label{estimate5}
    \begin{array}{l}\displaystyle
        \sup_{\left[0,T\right]}\left[\int_\Omega \rho_{10}^2 \left(|y_{tt}|^2 + |\Delta y_t|^2 \right)dx + \|\rho_9 y\|_{H^3(\Omega)^N}^2 \right] 
        \\ \noalign{\smallskip} \displaystyle
        \hspace{1.7cm} \ + \int_Q  \left(\rho_{10}^2 |\nabla y_{tt}|^2 +\rho_{10}^2|D^3 y_t|^2 + \rho_9^2|D^4 y|^2 \right) d(t,x) \leqslant C\kappa_4(y_0,f).
    \end{array}
\end{equation}
\normalsize
If, furthermore, $y_0 \in H^5(\Omega)^N,$ $f(0) \in H^3(\Omega)^N,$ $Af(0) \in V,$ and $f_t(0) \in H^1_0(\Omega)^N,$ then
\begin{equation} \label{estimate6}
    \sup_{\left[0,T\right]}\left(\int_\Omega \rho_{11}^2 |\nabla y_{tt}|^2 dx \right) + \int_Q\rho_{11}^2\left(|y_{ttt}|^2 + |\Delta y_{tt}|^2 \right)d(t,x) \leqslant C \kappa_5(y_0,f).
\end{equation}
where we have written
\begin{align*}
\kappa_4(y_0,f) :=& \int_Q\left(\rho_9^2 |\Delta f|^2 +\rho_{10}^2|\nabla f_t|^2 + \rho_{10}^2|f_{tt}|^2\right) d(t,x) \\
&+ \|y_0\|_{H^4(\Omega)^N}^2 + \|f(0)\|_{H^2(\Omega)^N}^2 + \|f_t(0)\|_{L^2(\Omega)^N}^2 + \kappa_3(y_0,f),
\end{align*}
$$
\kappa_5(y_0,f) := \|y_0\|_{H^5(\Omega)^N}^2 + \|f(0)\|_{H^3(\Omega)^N}^2 +\|f_t(0)\|_{H^1_0(\Omega)^N}^2 + \kappa_4(y_0,f).
$$
\end{lemma}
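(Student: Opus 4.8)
The plan is to continue the weighted-energy scheme of Lemma \ref{LemmaAdditional1} and of the lemma establishing (\ref{estimate3})--(\ref{estimate4}), now differentiating the Stokes system (\ref{Linear}) one and two further times in $t$ and testing against weighted multiples of $y_{tt}$, $A^2y_t$, $A^3y$ and $y_{ttt}-\nu_0Ay_{tt}$. As before one works with the Galerkin approximations $y_n$ (projections of $v$, $f$, $y_0$ onto the first $n$ eigenfunctions of the Stokes operator $A$), keeps every constant independent of $n$, and passes to the limit $n\to\infty$ at the end; the earlier bounds (\ref{estimate0})--(\ref{estimate4}), the control regularity of Lemma \ref{RegularControl}, Remark \ref{Weights} and (\ref{eq:multmin_maj_max}) are used throughout without comment. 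The time derivatives of the data at $t=0$ are obtained recursively from the equation: since $\nabla p$ is $L^2$-orthogonal to divergence-free fields, $y_t(0)$ is determined in $H$ by $y_0,v(0),f(0)$ and then $y_{tt}(0)$ by $y_t(0),v_t(0),f_t(0)$; iterating, $\|y_{tt}(0)\|^2\leqslant C\big(\|y_0\|_{H^4(\Omega)^N}^2+\|f(0)\|_{H^2(\Omega)^N}^2+\|f_t(0)\|^2+\|v(0)\|^2+\|v_t(0)\|^2\big)\leqslant C\kappa_4(y_0,f)$, the control values being finite by Lemma \ref{RegularControl} (note that $\widehat{\zeta}v_t\in C([0,T];H^1)$ and $\widehat{\zeta}v\in C([0,T];H^3)$ by interpolation of its memberships), while the compatibility hypotheses $Ay_0,A^2y_0\in[H^1_0(\Omega)]^N$ guarantee that $Ay_0$ and $Ay_t(0)$ lie in the spaces where these manipulations are legitimate (i.e.\ $y_0\in D(A^2)$).

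I would establish (\ref{estimate5}) in three steps. \emph{(i)} Differentiate (\ref{Linear}) twice in $t$ and test with $\rho_{10}^2y_{tt}$. The control term is treated by the successive decompositions $\rho_{10}v_{tt}=(\widehat{\zeta}v_t)_t-\widehat{\zeta}_tv_t$ and $\zeta v_t=(\zeta v)_t-\zeta_tv$, which place $\rho_{10}v_{tt}$ in $L^2(Q)^N$ by Lemma \ref{RegularControl} and (\ref{estimate0}) (using $|\widehat{\zeta}_t|\leqslant C\zeta$ and $|\zeta_t|\leqslant C\mu_{-1,1,-5}\leqslant C\rho_4$); the term carrying $\tfrac{d}{dt}(\rho_{10}^2)$ is dominated by $C\rho_9^2|y_{tt}|^2$, whose integral is a priori bounded by $C\kappa_3$ via (\ref{estimate4}); and the forcing term is absorbed through $\rho_{10}f_{tt}\in L^2(Q)^N$. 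Absorbing the $\epsilon$-contributions in $y_{tt}$ and invoking Gronwall's lemma with the initial bound above controls $\sup_{[0,T]}\int_\Omega\rho_{10}^2|y_{tt}|^2$ and $\int_Q\rho_{10}^2|\nabla y_{tt}|^2$. \emph{(ii)} Test the once-differentiated system with $\rho_{10}^2A^2y_t$ (equivalently, apply elliptic regularity for the stationary Stokes operator to $-\nu_0\Delta y_t+\nabla p_t=\chi_\omega v_t+f_t-y_{tt}$): the first term contributes $\tfrac{d}{dt}\int_\Omega\rho_{10}^2|Ay_t|^2$, the second $\int_Q\rho_{10}^2|A^{3/2}y_t|^2\sim\int_Q\rho_{10}^2|D^3y_t|^2$, and the data terms are handled by $\widehat{\zeta}v_t\in L^2(0,T;H^2(\omega))$, by $\rho_{10}f_t\in L^2(0,T;H^1_0)$ together with $\rho_8f_t,\rho_{10}f_{tt}\in L^2(Q)$ (so that $\rho_{10}f_t\in C([0,T];L^2)$), and by step \emph{(i)}; this yields $\sup_{[0,T]}\int_\Omega\rho_{10}^2|\Delta y_t|^2$ and $\int_Q\rho_{10}^2|D^3y_t|^2$. \emph{(iii)} Test the undifferentiated system with $\rho_9^2A^3y$: the good terms are $\tfrac{d}{dt}\int_\Omega\rho_9^2|A^{3/2}y|^2\sim\tfrac{d}{dt}\|\rho_9y\|_{H^3(\Omega)^N}^2$ and $\int_Q\rho_9^2|A^2y|^2\sim\int_Q\rho_9^2|D^4y|^2$, the forcing is routed through a duality pairing of $A^2y$ with the Leray projection of $f$ and hence through the hypothesis $\rho_9\Delta f\in L^2(Q)^N$ (using $f(t)\in H^1_0(\Omega)^N$, which follows from $f(0)\in H^1_0(\Omega)^N$ and $\rho_{10}f_t\in L^2(0,T;H^1_0)$, to pass from $\|f\|_{H^2}$ to $\|\Delta f\|$), the control through $\rho_9v\in L^2(0,T;H^2(\omega))$ (recall $\rho_9\leqslant C\zeta$ and $\zeta v\in L^2(0,T;H^2(\omega))$), the weight-derivative term through quantities already bounded in (\ref{estimate2})--(\ref{estimate4}), and the initial term equals $\rho_9(0)^2\|y_0\|_{H^3(\Omega)^N}^2\leqslant C\kappa_4$ (requiring $Ay_0\in[H^1_0(\Omega)]^N$, i.e.\ $y_0\in D(A^{3/2})$). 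Combining the three steps gives (\ref{estimate5}).

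For (\ref{estimate6}) one adds a final layer: differentiate (\ref{Linear}) twice in $t$ and test with $\rho_{11}^2(y_{ttt}-\nu_0Ay_{tt})$, producing $\sup_{[0,T]}\int_\Omega\rho_{11}^2|\nabla y_{tt}|^2$ and $\int_Q\rho_{11}^2(|y_{ttt}|^2+|\Delta y_{tt}|^2)$. The weight-derivative term now obeys $|\tfrac{d}{dt}(\rho_{11}^2)|\leqslant C\rho_{10}^2$, whose contribution against $|\nabla y_{tt}|^2$ is a priori bounded by $C\kappa_4$ through (\ref{estimate5}); the control term is dominated by $\rho_{11}v_{tt}$ with $\rho_{11}\leqslant C\rho_{10}$, already shown to lie in $L^2(Q)^N$; the forcing is absorbed via $\rho_{11}f_{tt}\in L^2(Q)^N$; and the initial term $\|\nabla y_{tt}(0)\|^2$ is read off the same recursion, which now demands $y_0\in H^5(\Omega)^N$ with $A^2y_0\in[H^1_0(\Omega)]^N$, $f(0)\in H^3(\Omega)^N$ with $Af(0)\in V$, and $f_t(0)\in H^1_0(\Omega)^N$ --- precisely the extra hypotheses --- whence $\|\nabla y_{tt}(0)\|^2\leqslant C\kappa_5$. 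Gronwall's lemma and the Galerkin limit then give (\ref{estimate6}).

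I expect the main obstacle to be the weight bookkeeping pervading steps \emph{(i)}--\emph{(iii)} and the analogous computation for (\ref{estimate6}): one must check, via Remark \ref{Weights}(b)--(c) and (\ref{eq:multmin_maj_max}), that \emph{every} term produced by the integrations by parts is dominated either by a lower-index weighted quantity already controlled in (\ref{estimate0})--(\ref{estimate4}) or by one of the data norms gathered in $\kappa_4$ (resp.\ $\kappa_5$) --- in particular, that the control survives the two decompositions $\rho_{10}v_{tt}=(\widehat{\zeta}v_t)_t-\widehat{\zeta}_tv_t$ and $\zeta v_t=(\zeta v)_t-\zeta_tv$ with the correct weight, that the forcing can always be routed so that only $\rho_9\Delta f$, $\rho_{10}\nabla f_t$, $\rho_{10}f_{tt}$ (and the lower-order $\rho_0f$, $\rho_8f_t$) are needed rather than heavier-weighted norms of $f$, and that the test functions $A^2y_t$ and $A^3y$ are compatible with the boundary conditions so that the integrations by parts and the identifications with $\|\cdot\|_{H^3}$, $\|\cdot\|_{H^4}$ are valid. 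A secondary technical point is to carry out the $t=0$ recursion at the level of the Galerkin approximations, keeping the constants in the initial-data estimates independent of $n$.
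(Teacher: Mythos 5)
Your steps \emph{(i)}, \emph{(ii)} and the treatment of (\ref{estimate6}) reproduce the paper's argument (same test functions $\rho_{10}^2 y_{tt}$, $\rho_{10}^2 A^2 y_t$, $\rho_{11}^2(y_{ttt}-\nu_0 A y_{tt})$, same control-decompositions $\rho_{10}v_{tt}=(\widehat{\zeta}v_t)_t-\widehat{\zeta}_t v_t$, $\zeta v_t=(\zeta v)_t-\zeta_t v$, same bounds on the initial data at $t=0$). The one place where you genuinely diverge is the piece of (\ref{estimate5}) that controls $\sup_{[0,T]}\|\rho_9 y\|_{H^3(\Omega)^N}^2$ and $\int_Q\rho_9^2|D^4 y|^2$: the paper first applies $A$ to the undifferentiated equation and tests with $-\rho_9^2 A^2 y$, which kills the pressure and produces \emph{no time derivative at all} --- the identity is
$\nu_0\int_\Omega\rho_9^2|\Delta^2 y|^2=-\int_\omega\rho_9^2\Delta v\cdot A^2y-\int_\Omega\rho_9^2\Delta f\cdot A^2y+\int_\Omega\rho_9^2\Delta y_t\cdot A^2y,$
so that $\Delta y_t$ enters purely as a source already controlled by (\ref{estimate4}), and no Gronwall step or weight-derivative term occurs; the $L^\infty$-in-time bound on $\|\rho_9 y\|_{H^3}$ is then read off from the memberships $\rho_9 A y_t,\rho_9 A^2 y\in L^2(Q)^N$ by an interpolation/embedding argument. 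You instead test the \emph{undifferentiated} equation with $\rho_9^2 A^3 y$, which does produce a $\tfrac{d}{dt}\int_\Omega\rho_9^2|A^{3/2}y|^2$ structure and hence the weight-derivative term $\int_Q\tfrac{d}{dt}(\rho_9^2)\,|A^{3/2}y|^2\leqslant C\int_Q\rho_8^2|D^3 y|^2$.

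Here there is a genuine gap in your write-up: you claim this weight-derivative term is controlled ``through quantities already bounded in (\ref{estimate2})--(\ref{estimate4})'', but none of those estimates bounds $\int_Q\rho_8^2|D^3 y|^2$ (they only give $\int_Q\rho_7^2|\Delta y|^2$, $\sup\rho_9^2\|\Delta y\|^2$, $\int_Q\rho_9^2|\Delta y_t|^2$, etc.), and direct absorption into $\int_Q\rho_9^2|A^{3/2}y|^2$ fails because $\rho_8/\rho_9=l^{-5/2}\to\infty$ as $t\uparrow T$. Your step \emph{(iii)} can be rescued, but only by a time-weighted interpolation/Young argument --- e.g.\ write $\|D^3y\|^2\leqslant\epsilon(t)\|D^4y\|^2+C\epsilon(t)^{-1}\|\Delta y\|^2$ with $\epsilon(t)=\epsilon_0\rho_9^2/\rho_8^2$, absorb the first piece into the dissipation $\nu_0\int_Q\rho_9^2|A^2y|^2$, and bound the second by $C\rho_7^2\|\Delta y\|^2$ (using $\rho_8^4/\rho_9^2\leqslant C\rho_7^2$ under (\ref{eq:multmin_maj_max}) with a suitable $\gamma$), which is controlled by (\ref{estimate2}). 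This is a non-trivial extra step that your proposal does not articulate, and it is precisely the difficulty that the paper's ``apply $A$, test with $A^2y$, no Gronwall'' device is designed to avoid. I would therefore either flesh out the interpolation-and-absorption argument explicitly, or adopt the paper's route for this sub-step.
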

\begin{proof}
Again, we proceed in the same framework as in the proof of Lemma \ref{LemmaAdditional1}. We begin by applying the Stokes operator $A$ on the equation of system (\ref{Linear}), and then use $-\rho_9^2A^2 y$ as a test function:
\footnotesize
\begin{equation} \label{3Adt1}
    \begin{array}{l} \displaystyle
    \nu_0\int_\Omega \rho_9^2|\Delta^2 y|^2 dx = -\int_\omega \rho_9^2 \Delta v \cdot A^2 y\ dx - \int_\Omega \Delta f \cdot A^2 y\ dx + \int_\Omega \rho_9^2 \Delta y_t \cdot A^2 y\ dx
    \\ \noalign{\smallskip} \displaystyle
    \hspace{1.5cm} \ \leqslant C\left(\int_\omega \zeta^2|\Delta v|^2 dx + \int_\Omega \rho_9^2 |\Delta f|^2 dx + \int_\Omega \rho_9^2 |\Delta y_t|^2 dx \right) + \frac{1}{2}\int_\Omega \rho_9^2 |\Delta^2 y|^2 dx,
    \end{array}
\end{equation}
\normalsize
We integrate (\ref{3Adt1}) with respect to time, whence
\begin{equation} \label{3Adt2}
    \int_Q \rho_9^2 |\Delta^2 y|^2 d(t,x) \leqslant C\kappa_4(y_0,f).
\end{equation}
We can now easily argue that, under suitable limiting arguments (having in view the compatibility conditions we required in the statement of the present lemma), Eq. \eqref{3Adt2} yields the corresponding estimate for the solution of \eqref{Linear}. We observe that the relations $\rho_9A y_t \in L^2(Q)^N$ and $\rho_9A^2 y \in L^2(Q)^N$ imply $\rho_9 y \in L^\infty(0,T; H^3(\Omega)^N),$ with
\begin{equation} \label{3Adt3}
    \sup_{\left[0,T\right]}\|\rho_9 y\|_{H^3(\Omega)^N}^2 \leqslant C\int_Q \rho_9^2 \left( |\Delta y_t|^2 + |\Delta^2 y|^2 \right)d(t,x) \leqslant C\kappa_4(y_0,f).
\end{equation}

In the differential equation of system (\ref{Linear}) differentiated once with respect to time, we use the test function $\rho_{10}^2 A^2 y_{t}:$ 
\begin{align} \label{APosterioriAdded1}
    \begin{split}
        \frac{1}{2}\frac{d}{dt}\left(\int_\Omega \rho_{10}^2 |\Delta y_{t}|^2 dx \right) +&\, \nu_0\int_\Omega \rho_{10}^2 |\nabla \Delta y_{t}|^2 dx \\ =&\, \int_\omega \rho_{10}^2 \nabla v_{t} : \nabla \Delta y_{t} dx + \int_\Omega \rho_{10}^2 \nabla  f_{t} : \nabla \Delta y_{t}dx  \\ &+ \frac{1}{2}\int_\Omega \frac{d}{dt}\left( \rho_{10}^2 \right) |\Delta y_{t}|^2 dx.
    \end{split}
\end{align}

For $\epsilon > 0,$

\small
\begin{equation} \label{APosterioriAdded2}
    \begin{array}{l} \displaystyle
        \int_\omega \rho_{10}^2 \nabla v_{t} : \nabla \Delta y_{t}dx \leqslant C_{\epsilon}\int_\omega \widehat{\zeta}^2 |\nabla v_{t}|^2 dx + \epsilon \int_\Omega \rho_{10}^2|\nabla \Delta y_{t}|^2 dx 
        \\ \noalign{\smallskip} \displaystyle
        \hspace{2.375cm} \ \leqslant C_{\epsilon} \int_\omega\left( \left|(\widehat{\zeta}v_t)_t\right|^2 + |(\zeta v)_t|^2 + \rho_4^2|v|^2\right)dx + \epsilon\int_\Omega \rho_{10}^2 |\nabla \Delta y_{t}|^2 dx,
    \end{array}
\end{equation}
\normalsize
\begin{equation} \label{APosterioriAdded3}
    \begin{array}{l} \displaystyle
         \int_\Omega \rho_{10}^2 \nabla f_{t}: \nabla \Delta y_{t} dx \leqslant C_{\epsilon}\int_\Omega \rho_{10}^2 |f_{tt}|^2 dx + \epsilon\int_\Omega \rho_{10}^2 |\nabla \Delta y_{t}|^2 dx ,
    \end{array}
\end{equation}
\begin{equation} \label{APosterioriAdded4}
    \int_\Omega \frac{d}{dt}\left( \rho_{10}^2\right)|\Delta y_{t}|^2 dx \leqslant C \int_\Omega \rho_9^2 |\Delta y_{t}|^2,
\end{equation}
and
\begin{equation} \label{APosterioriAdded5}
    \begin{array}{l} \displaystyle
        \|\Delta y_{t}(0)\|^2 \leqslant C\left(\|y_0\|_{H^4(\Omega)^N}^2 + \|\Delta v(0)\|_{L^2(\left]0,T\right[\times \omega)^N}^2 + \|\Delta f(0)\|^2 \right)
        \\ \noalign{\smallskip} \displaystyle
        \hspace{1.375cm} \ \leqslant C\kappa_4(y_0,f).
    \end{array}
\end{equation}
Therefore, by taking $\epsilon$ sufficiently small, and using (\ref{APosterioriAdded2})-(\ref{APosterioriAdded5}) in (\ref{APosterioriAdded1}), we deduce
\begin{equation} \label{APosterioriAdded6}
    \sup_{\left[0,T\right]}\left(\int_\Omega \rho_{10}^2 |\Delta y_t|^2dx \right) + \int_Q \rho_{10}^2 |\nabla\Delta y_t|^2 d(t,x) \leqslant C\kappa_4(y_0,f).
\end{equation}

Next, we differentiate the equation of system (\ref{Linear}) twice with respect to time and we use the test function $\rho_{10}^2 y_{tt}:$ 
\begin{align} \label{3Adt4}
  \begin{split}
    \frac{1}{2}\frac{d}{dt}\left(\int_\Omega \rho_{10}^2 |y_{tt}|^2 dx \right) + \nu_0\int_\Omega \rho_{10}^2 |\nabla y_{tt}|^2 dx =&\, \int_\omega \rho_{10}^2 v_{tt}\cdot y_{tt} dx + \int_\Omega \rho_{10}^2 f_{tt}\cdot y_{tt}dx \\
    &+ \frac{1}{2}\int_\Omega \frac{d}{dt}\left( \rho_{10}^2 \right) |y_{tt}|^2 dx.
  \end{split}
\end{align}
We have
\small
\begin{equation} \label{3Adt5}
    \begin{array}{l} \displaystyle
        \int_\omega \rho_{10}^2 v_{tt}\cdot y_{tt}dx \leqslant C\left(\int_\omega \widehat{\zeta}^2 |v_{tt}|^2 dx + \int_\Omega \rho_9^2|y_{tt}|^2 dx \right)
        \\ \noalign{\smallskip} \displaystyle
        \hspace{2.375cm} \ \leqslant C\left[ \int_\omega\left( \left|(\widehat{\zeta}v_t)_t\right|^2 + |(\zeta v)_t|^2 + \rho_4^2|v|^2\right)dx + \int_\Omega \rho_9^2 |y_{tt}|^2 dx\right],
    \end{array}
\end{equation}
\normalsize
\begin{equation} \label{3Adt6}
    \begin{array}{l} \displaystyle
         \int_\Omega \rho_{10}^2 f_{tt}\cdot y_{tt} dx \leqslant C\left(\int_\Omega \rho_{10}^2 |f_{tt}|^2 dx + \int_\Omega \rho_9^2 |y_{tt}|^2 dx \right),
    \end{array}
\end{equation}
\begin{equation} \label{3Adt7}
    \int_\Omega \frac{d}{dt}\left( \rho_{10}^2\right)|y_{tt}|^2 dx \leqslant C \int_\Omega \rho_9^2 |y_{tt}|^2,
\end{equation}
and
\small
\begin{equation} \label{3Adt8}
    \begin{array}{l} \displaystyle
        \|y_{tt}(0)\|^2 \leqslant C\left(\|y_0\|_{H^4(\Omega)^N}^2 + \|\Delta v(0)\|_{L^2(\left]0,T\right[\times \omega)^N}^2 + \|\Delta f(0)\|^2 \right.
        \\ \noalign{\smallskip} \displaystyle
        \hspace{2.0cm} \ \left. + \|v_t(0)\|_{L^2(\left]0,T\right[\times\omega)^N}^2 + \|f_t(0)\|^2 \right)
        \\ \noalign{\smallskip} \displaystyle
        \hspace{1.375cm} \ \leqslant C\kappa_4(y_0,f).
    \end{array}
\end{equation}
\normalsize
Using (\ref{3Adt5}), (\ref{3Adt6}) and (\ref{3Adt7}) in (\ref{3Adt4}), then integrating in time and using (\ref{3Adt8}), we infer
\begin{equation} \label{3Adt9}
    \sup_{\left[0,T\right]}\left(\int_\Omega \rho_{10}^2|y_{tt}|^2 dx \right) + \int_Q \rho_{10}^2 |\nabla y_{tt}|^2 d(t,x) \leqslant C\kappa_4(y_0,f). 
\end{equation}
Estimates (\ref{3Adt2}), (\ref{3Adt3}), (\ref{APosterioriAdded6}) and (\ref{3Adt9}) are enough to conclude (\ref{estimate5}).

Now, we use $\rho_{11}^2(y_{ttt} - \nu_0 A y_{tt})$ as a test function in the equation of system (\ref{Linear}) twice differentiated in time, reaching
\begin{equation} \label{3Adt10}
    \begin{array}{l} \displaystyle
        \int_\Omega \rho_{11}^2|y_{ttt}|^2 dx + \nu_0^2 \int_\Omega \rho_{11}^2|\Delta y_{tt}|^2 dx + \nu_0 \frac{d}{dt}\left(\int_\Omega \rho_{11}^2 |\nabla y_{tt}|^2 dx \right) 
        \\ \noalign{\smallskip} \displaystyle
        \hspace{0.5cm} \ = \int_\omega \rho_{11}^2 v_{tt}\cdot (y_{ttt}-\nu_0A y_{tt}) dx + \int_\Omega \rho_{11}^2 f_{tt}\cdot (y_{ttt}-\nu_0 A y_{tt})dx 
        \\ \noalign{\smallskip} \displaystyle
        \hspace{1.0cm} \ +\, \nu_0 \int_\Omega \frac{d}{dt}(\rho_{11}^2)|\nabla y_{tt}|^2 dx.
    \end{array}
\end{equation}
For $\epsilon > 0,$
\begin{equation} \label{3Adt11}
    \begin{array}{l} \displaystyle
        \int_\omega \rho_{11}^2 v_{tt}\cdot (y_{ttt}-\nu_0A y_{tt}) dx \leqslant C\bigg[\frac{1}{\epsilon}\int_\omega \left(\left|(\widehat{\zeta}v_t)_t\right|^2 + |(\zeta v)_t|^2 + \rho_4^2|v|^2\right)dx 
        \\ \noalign{\smallskip} \displaystyle
        \hspace{5.0cm} \ +\ \epsilon\int_\Omega \rho_{11}^2\left( |y_{ttt}|^2 + |\Delta y_{tt}|^2 \right)dx \bigg],
    \end{array}
\end{equation}
\small
\begin{equation} \label{3Adt12}
    \int_\Omega \rho_{11}^2 f_{tt}\cdot (y_{ttt}-\nu_0 A y_{tt})dx \leqslant C\left[\frac{1}{\epsilon}\int_\Omega \rho_{10}^2 |f_{tt}|^2 dx + \epsilon\int_\Omega \rho_{11}^2\left( |y_{ttt}|^2 + |\Delta y_{tt}|^2 \right)dx\right],
\end{equation}
\normalsize
and we also notice that
\begin{equation} \label{3Adt13}
    \int_\Omega \frac{d}{dt}(\rho_{11}^2)|\nabla y_{tt}|^2 dx \leqslant C \int_\Omega \rho_{10}^2 |\nabla y_{tt}|^2 dx.
\end{equation}
We easily check that
\begin{equation} \label{3Adt14}
    \|\nabla y_{tt}(0)\|^2 \leqslant C\kappa_5(y_0,f).
\end{equation}
As in the proof of (\ref{estimate5}), inequalities (\ref{3Adt11})-(\ref{3Adt14}), we can infer from (\ref{3Adt10}), through an adequate choice of a small positive $\epsilon,$ and the aid of the previous estimates, the subsequent inequality
\begin{equation} \label{3Adt15}
    \int_Q \rho_{11}^2 \left( |y_{ttt}|^2 +|\Delta y_{tt}|^2\right) d(t,x) + \sup_{\left[0,T\right]}\left(\int_\Omega \rho_{11}^2 |\nabla y_{tt}|^2 dx \right) \leqslant C\kappa_5(y_0,f).
\end{equation}
Estimate (\ref{3Adt15}) is precisely (\ref{estimate6}); hence, we have finished the proof of the present result.
\end{proof}

\section{Null controllability of the model (\ref{Model})} \label{sec3}

\subsection{Local right inversion theorem}

It is possible to find a proof of the subsequent result in \cite{alekseev1987optimal}. This is the inversion theorem that we will use to obtain our local null controllability result.
\begin{theorem} \label{Liusternik}
Let $Y$ and $Z$ be two Banach spaces, and $H : Y \rightarrow Z$ be a continuous function, with $H(0) = 0.$ We assume that there are three constants $\delta,\eta^\prime,M > 0$ and a continuous linear mapping $\Lambda$ from $Y$ onto $Z$ with the following properties:
\begin{itemize}
    \item[(i)] For all $e \in Y,$ we have
    $$
    \|e\|_Y \leqslant M\|\Lambda(e)\|_Z;
    $$
    \item[(ii)] The constants $\delta$ and $M$ satisfy $\delta < M^{-1};$
    \item[(iii)] Whenever $e_1,e_2 \in B_Y(0;\eta^\prime),$ the inequality 
    $$
    \|H(e_1) - H(e_2) - \Lambda(e_1-e_2)\|_Z \leqslant \delta \|e_1-e_2\|_Y
    $$
    holds.
\end{itemize}
Then, whenever $k \in B_Z(0;\eta),$ the equation $H(e) = k$ has a solution $e \in B_Y(0;\eta^\prime),$ where $\eta:= \left(M^{-1} - \delta\right)\eta^\prime.$
\end{theorem}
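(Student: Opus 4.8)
The plan is to reduce the statement to the Banach fixed-point theorem applied to a Newton-type iteration map. First I would note that hypothesis (i) already forces $\Lambda$ to be injective: if $\Lambda(e)=0$ then $\|e\|_Y\leqslant M\cdot 0=0$, so $e=0$. Together with the surjectivity assumed in the statement, $\Lambda$ is thus a continuous linear bijection of $Y$ onto $Z$; its inverse $\Lambda^{-1}\colon Z\to Y$ is linear, and (i) applied to $e=\Lambda^{-1}(z)$ gives the quantitative bound $\|\Lambda^{-1}(z)\|_Y\leqslant M\|z\|_Z$ for every $z\in Z$. This is the only structural fact about $\Lambda$ we will need.

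Next, fix $k\in B_Z(0;\eta)$ with $\eta=(M^{-1}-\delta)\eta'$, and define
\[
\Phi_k\colon \overline{B_Y(0;\eta')}\longrightarrow Y,\qquad \Phi_k(e):=e+\Lambda^{-1}\bigl(k-H(e)\bigr).
\]
The decisive algebraic observation is that for $e_1,e_2\in\overline{B_Y(0;\eta')}$ one has $\Phi_k(e_1)-\Phi_k(e_2)=\Lambda^{-1}\bigl(\Lambda(e_1-e_2)-(H(e_1)-H(e_2))\bigr)$, so that the bound on $\Lambda^{-1}$ together with hypothesis (iii) yields $\|\Phi_k(e_1)-\Phi_k(e_2)\|_Y\leqslant M\delta\,\|e_1-e_2\|_Y$, and $M\delta<1$ by (ii): $\Phi_k$ is a contraction. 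I would then check that $\Phi_k$ sends the closed ball into itself. Using $H(0)=0$, for $e\in\overline{B_Y(0;\eta')}$,
\[
\|\Phi_k(e)\|_Y\leqslant \|\Phi_k(e)-\Phi_k(0)\|_Y+\|\Lambda^{-1}(k)\|_Y\leqslant M\delta\,\eta'+M\|k\|_Z< M\delta\,\eta'+M\eta=\eta',
\]
where the last equality is exactly the calibration $M\eta=(1-M\delta)\eta'$. Hence $\Phi_k$ is a contraction of the complete metric space $\overline{B_Y(0;\eta')}$ into itself; Banach's theorem produces a unique fixed point $e$, which the strict inequality above places in the open ball $B_Y(0;\eta')$. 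Finally, $\Phi_k(e)=e$ means $\Lambda^{-1}(k-H(e))=0$, and since $\Lambda^{-1}$ is injective this is precisely $H(e)=k$, so $e\in B_Y(0;\eta')$ is the desired solution.

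The one point that really requires care — and where all three hypotheses conspire — is the self-mapping estimate: we must keep every iterate inside $B_Y(0;\eta')$ so that the defect inequality (iii) is applicable, and this closes up only because the Lipschitz constant $M\delta$ of the linearization error is strictly less than $1$ and because the admissible data radius has been chosen to be $\eta=(M^{-1}-\delta)\eta'$, giving $M\delta\,\eta'+M\eta=\eta'$. Everything else (boundedness of $\Lambda^{-1}$, the contraction estimate, the passage to the limit) is routine once this bookkeeping is in place. If one prefers to avoid invoking Banach's theorem as a black box, the same argument can be run by hand: set $e_0:=0$, $e_{n+1}:=\Phi_k(e_n)$, prove by induction that $e_n\in B_Y(0;\eta')$ and $\|e_{n+1}-e_n\|_Y\leqslant (M\delta)^n M\|k\|_Z$, sum the geometric series to obtain $\|e_n\|_Y\leqslant M\|k\|_Z/(1-M\delta)<\eta'$, and pass to the limit in the recursion.
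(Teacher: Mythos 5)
Your argument is correct and is essentially the proof one finds in the cited reference \cite{alekseev1987optimal} (the paper itself gives no proof): the modified-Newton iteration $e_{n+1}=e_n-\Lambda^{-1}\bigl(H(e_n)-k\bigr)$, packaged as a Banach fixed-point argument for $\Phi_k$. You also correctly read hypothesis (i) as forcing injectivity of $\Lambda$ (this is actually stronger than the usual Graves-type hypothesis, which only posits a bounded right inverse); that is what legitimizes writing $\Lambda^{-1}$ and makes the iteration single-valued, and the calibration $M\delta\eta'+M\eta=\eta'$ is used exactly as intended.

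One technical point should be tidied up. Hypothesis (iii) is stated for $e_1,e_2$ in the \emph{open} ball $B_Y(0;\eta')$, but you apply it with $e_1,e_2$ ranging over the \emph{closed} ball $\overline{B_Y(0;\eta')}$, which you need for completeness; as literally written this invocation is outside the stated domain of (iii). The cleanest repair is to exploit the strictness of $\|k\|_Z<\eta$: choose $\rho$ with $M\|k\|_Z/(1-M\delta)<\rho<\eta'$ and check that $\Phi_k$ is a contraction of $\overline{B_Y(0;\rho)}\subset B_Y(0;\eta')$ into itself, so that (iii) applies verbatim. (Alternatively, pass (iii) to the closure of the ball by the continuity of $H$ and $\Lambda$.) With that adjustment the rest of your argument — the self-mapping estimate, the strict inclusion of the fixed point in $B_Y(0;\eta')$, and the identification $\Phi_k(e)=e\Leftrightarrow H(e)=k$ — goes through unchanged.
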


A typical way of verifying condition (iii) is through the remark presented below.
\begin{remark}
Let $Y$ and $Z$ be two Banach spaces, and let us consider a continuous mapping $H:Y\rightarrow Z,$ with $H(0)=0,$ of class $C^1(Y,Z).$ Then it has property $(iii),$ for any positive $\delta$ subject to $(ii),$ as long as we take $\eta^\prime$ as the continuity constant of $DH \in \mathcal{L}(Y,\mathcal{L}(Y,Z))$ at the origin of $Y.$
\end{remark}

\subsection{The setup} \label{setup}
Let us set
$$
X_1 := L^2(Q;\rho_3^2)^N,\ X_2 := L^2(\left]0,T\right[\times \omega; \rho_4^2)^N.
$$
We define 
\begin{equation}
    \begin{array}{l} \displaystyle
        Y := \bigg\{ (y,p,v) \in X_1 \times L^2(Q) \times X_2 : y_t \in L^2(Q)^N,\ \nabla y \in L^2(Q)^{N\times N},
        \\ \noalign{\smallskip} \displaystyle
        \hspace{2.0cm} (\zeta v)_t,\ \zeta\Delta v,\ (\widehat{\zeta} v_t)_t,\ \widehat{\zeta}\Delta v_t,\ \widehat{\zeta}D^4 v \in L^2(\left]0,T\right[\times\omega)^N,
        \\ \noalign{\smallskip} \displaystyle
        \hspace{1.2cm} \text{ for } f:= Ly + \nabla p - \chi_\omega,\ \rho_0 f,\ \rho_8 f_t,\ \rho_9 \Delta f,\ \rho_{10}f_{tt} \in L^2(Q)^N,
        \\ \noalign{\smallskip} \displaystyle
        \hspace{1.5cm} \rho_{10}f_t \in L^2\left(0,T; H^1_0(\Omega)^N\right),\ f(0) \in \left[H^3(\Omega)\cap H^1_0(\Omega)\right]^N, 
        \\ \noalign{\smallskip} \displaystyle
        \hspace{0.9cm} \ Af(0) \in H^1_0(\Omega)^N,\, f_t(0) \in H^1_0(\Omega)^N, \ y|_\Sigma \equiv 0,\ \nabla \cdot y \equiv 0,
        \\ \hspace{0.9cm} \ y(0) \in \left[H^5(\Omega)\cap V\right]^N,\, Ay(0), \, A^2y(0) \in H^1_0(\Omega)^N, \int_\Omega p\ dx = 0  \bigg\}.
    \end{array}
\end{equation}
We consider on $Y$ the norm
\scriptsize
\begin{equation} \label{NormOfY}
    \begin{array}{l} \displaystyle
        \|(y,p,v)\|_Y^2 := \int_Q\left(\rho_3^2|y|^2 + \rho_0^2 |f|^2+ \rho_8^2 |f_t|^2 + \rho_9^2 |\Delta f|^2 + \rho_{10}^2|\nabla f_t|^2 + \rho_{10}^2|f_{tt}|^2 \right)d(t,x)
        \\ \noalign{\smallskip} \displaystyle
        \hspace{1.0cm} \ +\ \int_0^T \int_\omega\left(\rho_4^2\left|v\right|^2 + \left|(\zeta v)_t\right|^2 +  \left|\zeta\Delta v\right|^2 +  \left|(\widehat{\zeta} v_t)_t\right|^2 + \left|\widehat{\zeta}\Delta v_t\right|^2 + \left|D^4\left( \widehat{\zeta} v \right)\right|^2 \right) dx dt
        \\ \noalign{\smallskip} \displaystyle
        \hspace{1.0cm} \ +\ \|f(0)\|_{\left[H^3(\Omega)\cap H^1_0(\Omega)\right]^N}^2 + \|f_t(0)\|_{H^1_0(\Omega)^N}^2 + \|y(0)\|_{H^5(\Omega)^N}^2,
    \end{array}
\end{equation}
\normalsize
where in (\ref{NormOfY}) we have written $f:= Ly + \nabla p - \chi_\omega v.$ Then, endowing the space $Y$ with $\|\cdot\|_Y$ renders it a Banach space.

Now, we put
\footnotesize
\begin{equation}
    \begin{array}{l}
        F := \Big\{ f \in L^2(Q)^N : \rho_0 f,\ \rho_8 f_t,\ \rho_9 \Delta f,\ \rho_{10}f_{tt} \in L^2(Q)^N,\ \rho_{10}f_t \in L^2\left(0,T; H^1_0(\Omega)^N\right),
        \\ \noalign{\smallskip} \displaystyle
        \hspace{5.0cm} \ f(0) \in \left[H^3(\Omega)\cap H^1_0(\Omega)\right]^N,\ f_t(0) \in H^1_0(\Omega)^N \Big\},
    \end{array}
\end{equation}
\normalsize
\begin{align}
    \begin{split}
        \|f\|_F^2 :=& \int_Q\left(\rho_0^2 |f|^2+ \rho_8^2 |f_t|^2 + \rho_9^2 |\Delta f|^2 + \rho_{10}^2|\nabla f_t|^2 + \rho_{10}^2|f_{tt}|^2 \right)d(t,x) \\
        &\ + \|f(0)\|_{\left[H^3(\Omega)\cap H^1_0(\Omega)\right]^N}^2 + \|f_t(0)\|_{H^1_0(\Omega)^N}^2,
    \end{split}
\end{align}
and also consider the space of initial conditions
$$
G := \left\{ y_0 \in H^5(\Omega)^N \cap V : Ay_0,\,A^2y_0 \in H^1_0(\Omega)^N \right\},
$$
with the same topology as $H^5(\Omega)^N\cap V.$ Then, we define
\begin{equation}
    Z := F \times G,
\end{equation}
The space $Z$ with the natural product topology is also a Banach space.

Finally, we define the mapping $H : Y \rightarrow Z$ by
\begin{equation}
    H(y,p,v) := \left(\frac{Dy}{Dt} - \nabla \cdot \mathcal{T}(y,p) - \chi_\omega v , y(0)\right).
\end{equation}

\subsection{Three lemmas and the conclusion}

\begin{lemma} \label{HIsWellDefined}
The mapping $H : Y \rightarrow Z$ is well-defined, and it is continuous.
\end{lemma}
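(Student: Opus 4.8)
My plan rests on the affine-plus-superlinear structure of $H$. Expanding the stress divergence, $\nabla\cdot\mathcal{T}(y,p)=-\nabla p+\nu_0\Delta y+\nu_1\nabla\cdot\bigl(|\nabla y|^r\nabla y\bigr)$, so that with $f:=Ly+\nabla p-\chi_\omega v$ one has
$$
H(y,p,v)=\bigl(f+N(y),\,y(0)\bigr),\qquad N(y):=(y\cdot\nabla)y-\nu_1\nabla\cdot\bigl(|\nabla y|^r\nabla y\bigr).
$$
By the very definitions of $\|\cdot\|_Y$, of $\|\cdot\|_F$ and of the topology of $G$, the map $(y,p,v)\mapsto(f,y(0))$ is a bounded linear operator from $Y$ into $F\times G=Z$, hence continuous and well-defined; so everything reduces to proving that $(y,p,v)\mapsto N(y)$ is a well-defined continuous map from $Y$ into $F$ (the second component then landing trivially in $G$, since $0\in G$).

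The first substantial step is to observe that every element $(y,p,v)$ of $Y$ carries the full list of weighted bounds collected in the Remark following Theorem \ref{MainThm}. Indeed $y$ solves the forced Stokes system \eqref{Linear} with data $\bigl(\chi_\omega v,f,y(0)\bigr)$, and this datum satisfies all the regularity and compatibility requirements of Lemma \ref{LemmaAdditional1}, of the lemma yielding \eqref{estimate3}--\eqref{estimate4}, and of Lemma \ref{LemmaAdditional3}; the energy arguments proving those lemmas go through verbatim for an arbitrary solution of \eqref{Linear}, the role played there by \eqref{estimate0} now being taken by the inequality $\int_Q\rho_3^2|y|^2\,d(t,x)+\int_0^T\!\!\int_\omega\rho_4^2|v|^2\,dx\,dt\leqslant\|(y,p,v)\|_Y^2$ and the role of Lemma \ref{RegularControl} by the weighted control memberships $(\zeta v)_t,\zeta\Delta v,(\widehat\zeta v_t)_t,\widehat\zeta\Delta v_t,\widehat\zeta D^4 v\in L^2(\left]0,T\right[\times\omega)^N$ that are already part of the definition of $Y$. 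Thus $(y,p,v)\mapsto y$ is a bounded linear map from $Y$ into the Banach space $\mathcal{Y}$ obtained by bundling together all the weighted norms of that Remark, with $\|y\|_{\mathcal{Y}}\leqslant C\|(y,p,v)\|_Y$.

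The heart of the proof is then the estimate $\|N(y)\|_F\leqslant C\,\mathcal{P}\bigl(\|y\|_{\mathcal{Y}}\bigr)$ for some polynomial $\mathcal{P}$ with $\mathcal{P}(0)=0$. I would treat the convective part $(y\cdot\nabla)y$ and the shear part $\nu_1\nabla\cdot(|\nabla y|^r\nabla y)$ separately and, for each of the six space--time integrals and the two $t=0$ traces defining $\|\cdot\|_F$, expand by the Leibniz rule, distribute the target weight ($\rho_0,\rho_8,\rho_9,\rho_{10}$, or the appropriate initial-trace norm) among the factors using Remark \ref{Weights}(a)--(c), and close by H\"older's inequality together with the Sobolev embeddings available in dimension $N\leqslant3$ (e.g. $H^2(\Omega)\hookrightarrow L^\infty(\Omega)$, $H^1(\Omega)\hookrightarrow L^6(\Omega)$). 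The weight bookkeeping --- checking that every product of weights that arises is dominated by the target weight via Remark \ref{Weights}(c) --- is long but mechanical, and this is exactly where the exponential blow-up of the $\rho_k$'s as $t\uparrow T$ is exploited (it can only help, since $\rho_k^{-1}\to0$). The initial traces $N(y)(0)$ and $N(y)_t(0)$ are placed in $\left[H^3(\Omega)\cap H^1_0(\Omega)\right]^N$ and $H^1_0(\Omega)^N$ respectively by using $y(0)\in\left[H^5(\Omega)\cap V\right]^N$, the compatibility conditions $Ay(0),A^2y(0)\in H^1_0(\Omega)^N$, and the boundary condition $y|_\Sigma\equiv0$ (which kills the convective trace), together with the corresponding conditions on $f(0),f_t(0)$ already built into $Y$.

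The delicate point --- and the reason for the hypothesis $r\in\{1,2\}$ or $r\geqslant3$ --- is the differentiation of the factor $|\nabla y|^r$ inside $\nu_1\nabla\cdot(|\nabla y|^r\nabla y)$: forming $\Delta N(y)$ and $N(y)_{tt}$ calls for up to two spatial and two temporal derivatives of $|\nabla y|^r$, producing terms of the shape $|\nabla y|^{r-k}$ ($1\leqslant k\leqslant4$) times products of derivatives of $\nabla y$; one must absorb the possibly singular powers $|\nabla y|^{r-k}$ into the accompanying derivative factors so as to recover a bound polynomial in $\|y\|_{\mathcal{Y}}$, which works for $r\geqslant3$ and, by a separate elementary argument exploiting the extra $\nabla y$ factors, for $r\in\{1,2\}$, but breaks down in the intermediate range. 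Continuity then follows by repeating the whole chain of estimates on the differences $N(y^1)-N(y^2)$: the convective term is bilinear, and $s\mapsto|s|^rs$ is of class $C^1$ (as smooth as the admissible $r$ require), so $|\nabla y^1|^r\nabla y^1-|\nabla y^2|^r\nabla y^2$ is controlled by $|\nabla y^1-\nabla y^2|$ times a polynomial in $|\nabla y^1|,|\nabla y^2|$, giving $\|N(y^1)-N(y^2)\|_F\leqslant C\,\|y^1-y^2\|_{\mathcal{Y}}\,\mathcal{Q}\bigl(\|y^1\|_{\mathcal{Y}},\|y^2\|_{\mathcal{Y}}\bigr)$; composing with the bounded linear map $(y,p,v)\mapsto y$ yields the continuity of $H$ on $Y$. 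I expect the high-order differentiation of $|\nabla y|^r$ just described to be the main obstacle, with the sheer volume of weight bookkeeping a close second.
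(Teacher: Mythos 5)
Your proposal takes essentially the same route as the paper: the same decomposition of $H_1$ into the linear Stokes term $f$ plus $N(y)=(y\cdot\nabla)y-\nu_1\nabla\cdot\bigl(|\nabla y|^r\nabla y\bigr)$, the same estimates (pointwise Leibniz expansion of the high-order derivatives, weight bookkeeping via Remark~\ref{Weights}, H\"older with the embeddings $H^2(\Omega)\hookrightarrow L^\infty(\Omega)$ and $H^1(\Omega)\hookrightarrow L^6(\Omega)$), and the same case analysis on $r$. Your explicit observation that membership in $Y$ delivers the full list of weighted energy bounds from Section~\ref{sec2} --- because $y$ solves the forced Stokes system with data controlled by the $Y$-norm --- is a step the paper uses silently, and surfacing it is a clarification of, not a deviation from, the paper's argument.
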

\begin{proof}
We write $H(y,p,v) = (H_1(y,p,v),H_2(y,p,v)),$ where
\begin{equation} \label{Decomp}
    \begin{cases}
        H_1(y,p,v) := \frac{Dy}{Dt} - \nabla \cdot \mathcal{T}(y,p) - \chi_\omega v; \\
        H_2(y,p,v) := y(0).
    \end{cases}
\end{equation}
There is nothing to prove about $H_2,$ since it is cleary linear and continuous. We will consider only the mapping $H_1$ in what follows.
		
We decompose $H_1(y,v) = h_1(y,p,v) + h_2(y,p,v) + h_3(y,p,v),$ where 
$$
\begin{cases}
h_1(y,p,v):= y_t -\nu(0)\Delta y + \nabla p - \chi_\omega v, \\ h_2(y,p,v):= - \nabla \cdot\left[(\nu(\nabla y)-\nu(0))\nabla y \right], \\
h_3(y,p,v)=(y\cdot \nabla) y.
\end{cases}
$$
By the definition of the norm of $F,$ it follows promptly that 
$$
\|h_1(y,p,v)\|_F < \infty.
$$
Next, we will prove that the quantity $\|h_2(y,p,v)\|_F$ is finite. 


\textbf{CLAIM 1:} $\|\Delta h_2(y,p,v)\|_9 < \infty.$

We notice that
\scriptsize
\begin{equation*}
\begin{array}{l}
|\Delta h_2(y,p,v)| \leqslant C\big[ (r+1)r|r-1||\nabla y|^{r-2}|D^2 y|^3 + (r+1)r|\nabla  y|^{r-1}|D^2 y||D^3 y|+(r+1)|\nabla y|^r|D^4 y|\big]
\\ \noalign{\smallskip} \displaystyle
\hspace{1.5cm}\ = C(D_{1,1} + D_{1,2} + D_{1,3}).
\end{array}    
\end{equation*}
\normalsize

In the case $r=1,$ the term $D_{1,1}$ vanishes and thus $|\Delta h_2|$ is bounded by $C(D_{1,2} + D_{1,3}).$ Otherwise, assuming $r \geqslant 2,$ we have

\begin{equation*}
\begin{array}{l} \displaystyle
\int_Q \rho_9^2 D_{1,1}^2 d(t,x) \leqslant C(r)\int_0^T \rho_9^2 \int_\Omega |\nabla y|^{2(r-2)} |D^2 y|^6 dx\ dt
\\ \noalign{\smallskip} \displaystyle
\hspace{1.5cm} \leqslant  C(r)\int_0^T \rho_{9}^{2}\|D^3 y\|^{2(r-2)}\|D^2 y\|_{L^6(\Omega)}^6 dt
\\ \noalign{\smallskip} \displaystyle
\hspace{1.5cm} \leqslant C(r) \int_0^T \rho_{9}^{2}\|D^3 y\|^{2(r-2)}\|y\|_{H^3(\Omega)}^6 dt
\\ \noalign{\smallskip} \displaystyle
\hspace{1.5cm} \leqslant C(r) \int_0^T \rho_{9}^{2}\|D^3 y\|^{2(r+2)} dt
\\ \noalign{\smallskip} \displaystyle
\hspace{1.5cm} \leqslant C(r)\left( \sup_{\left[0,T\right]}\|\rho_{9}D^3 y\| \right)^{2(r+2)} \int_Q \rho_{9}^{-2(r+1)} d(t,x) < \infty.
\end{array}    
\end{equation*}
In the above equations, we used the continuous immersions: $H^2(\Omega) \hookrightarrow L^\infty(\Omega);$ $H^1(\Omega) \hookrightarrow L^6(\Omega).$ These are valid for $N \leqslant 3,$ see \cite{evans10}, and we will use them tacitly henceforth. 

Now, we obtain the estimate for $D_{1,2}:$ 
\begin{equation*}
\begin{array}{l} \displaystyle
\int_Q \rho_9^2 D_{1,2}^2 d(t,x) \leqslant C(r) \int_0^T \rho_{9}^2 \int_\Omega |\nabla y|^{2(r-1)}|D^2 y|^2|D^3 y|^2 dx\ dt
\\ \noalign{\smallskip} \displaystyle
\hspace{1.5cm} \leqslant C(r) \int_0^T \rho_9^2\|D^3 y\|^{2r}\|D^4 y\|^2 dt
\\ \noalign{\smallskip} \displaystyle
\hspace{1.5cm} \leqslant C(r) \left(\sup_{\left[0,T\right]}\|\rho_{9}D^3 y\| \right)^{2r} \int_Q \rho_{9}^2|D^4 y|^2 d(t,x) < \infty.
\end{array}    
\end{equation*}

Likewise, we show $D_{1,3}$ to be finite, since
\begin{equation*}
\begin{array}{l} \displaystyle
\int_Q \rho_9^2 D_{1,3}^2 d(t,x) \leqslant C(r) \int_0^T \rho_9^2 \int_\Omega |\nabla y|^{2r} |D^4 y|^2dx\ dt
\\ \noalign{\smallskip} \displaystyle
\hspace{1.5cm} \leqslant  C(r) \int_0^T \rho_{9}^{-2r}\left(\rho_{9}\|D^3 y\|\right)^{2r} \left(\rho_{9}\|D^4 y\|\right)^2 dt
\\ \noalign{\smallskip} \displaystyle
\hspace{1.5cm} \leqslant C(r) \sup_{\left[0,T\right]}\|\rho_{9}D^3 y\|^{2r} \int_Q \rho_{9}^2 |D^4 y|^2 d(t,x) < \infty.
\end{array}    
\end{equation*}


\textbf{CLAIM 2:} $\|\partial_t^2 h_2(y,p,v)\|_{10} < \infty.$

We begin with the pointwise estimate,
\begin{equation*}
\begin{array}{l}
|\partial_t^2 h_2(y,p,v)| \leqslant C\big[ (r+1)r|r-1||\nabla y|^{r-2}|\nabla y_t|^2|\Delta y| + (r+1)r|\nabla y|^{r-1}|\nabla y_{tt}||\Delta y| 
\\ \noalign{\smallskip} \displaystyle
\hspace{2.3cm} \ +\ (r+1)r|\nabla y|^{r-1}|\nabla y_t||\Delta y_t| +(r+1)|\nabla y|^r|\Delta y_{tt}|\big]
\\ \noalign{\smallskip} \displaystyle
\hspace{2.05cm} \ = C(D_{2,1}+D_{2,2}+D_{2,3}+D_{2,4}).
\end{array}    
\end{equation*}

As in the previous claim, if $r=1,$ then $D_{2,1}\equiv 0.$ For $r \geqslant 2,$ the next estimate is valid:
\begin{equation*}
\begin{array}{l} \displaystyle
\int_Q \rho_{10}^2 D_{2,1}^2 d(t,x) \leqslant C(r) \int_0^T \rho_{10}^2 \int_\Omega |\nabla y|^{2(r-2)}|\nabla y_{t}|^4|\Delta y|^2 dx\ dt
\\ \noalign{\smallskip} \displaystyle
\hspace{1.5cm} \leqslant C(r) \int_0^T \rho_9^{-2r}  \|\rho_{9} D^3 y\|^{2(r-2)}\|\rho_{9} D^4 y\|^2 \|\rho_{10}  \Delta y_t\|^4 dt
\\ \noalign{\smallskip} \displaystyle
\hspace{1.5cm} \leqslant C(r) \left(\sup_{\left[0,T\right]}\|\rho_{9}D^3 y\| \right)^{2(r-2)} \left(\sup_{\left[0,T\right]}\|\rho_{10}\Delta y_t\| \right)^4 \int_Q \rho_{9}^2|D^4 y|^2 d(t,x)
\\ \noalign{\smallskip} \displaystyle
\hspace{1.5cm} < \infty.
\end{array}    
\end{equation*}

Proceeding similarly, we prove the remaining inequalities:
\begin{equation*}
\begin{array}{l} \displaystyle
\int_Q \rho_{10}^2 D_{2,2}^2 d(t,x) \leqslant C(r) \int_0^T \rho_{10}^2 \int_\Omega |\nabla y|^{2(r-1)}|\nabla y_{tt}|^2|\Delta y|^2 dx\ dt
\\ \noalign{\smallskip} \displaystyle
\hspace{1.5cm} \leqslant C(r) \int_0^T \rho_{10}^{2}\rho_9^{-2r} \rho_{11}^{-2}  \|\rho_9 D^3 y\|^{2(r-1)} \|\rho_{9} D^4 y\|^2 \|\rho_{11} \nabla y_{tt}\|^2 dt
\\ \noalign{\smallskip} \displaystyle
\hspace{1.5cm} \leqslant C(r) \left(\sup_{\left[0,T\right]}\|\rho_{9}D^3 y\| \right)^{2(r-1)} \left(\sup_{\left[0,T\right]}\|\rho_{11}\nabla y_{tt}\| \right)^2 \int_Q \rho_{9}^2|D^4 y|^2 d(t,x)\\ \noalign{\smallskip} \displaystyle
\hspace{1.5cm} < \infty;
\end{array}    
\end{equation*}

\begin{equation*}
\begin{array}{l} \displaystyle
\int_Q \rho_{10}^2 D_{2,3}^2 d(t,x) \leqslant C(r) \int_0^T \rho_{10}^2 \int_\Omega |\nabla y|^{2(r-1)} |\nabla y_{t}|^2|\Delta y_t|^2 dx\ dt
\\ \noalign{\smallskip} \displaystyle
\hspace{1.5cm} \leqslant C(r) \int_0^T \rho_9^{-2(r-1)}\rho_{10}^{-2} \|\rho_9 D^3 y\|^{2(r-1)} \|\rho_{10} D^3 y_t\|^2 \|\rho_{10} \Delta y_{t}\|^2 dt
\\ \noalign{\smallskip} \displaystyle
\hspace{1.5cm} \leqslant C(r) \left(\sup_{\left[0,T\right]}\|\rho_{9}D^3 y\| \right)^{2(r-1)} \left(\sup_{\left[0,T\right]}\|\rho_{10}\Delta y_{t}\| \right)^2 \int_Q \rho_{10}^2|D^3 y_t|^2 d(t,x)\\ \noalign{\smallskip} \displaystyle
\hspace{1.5cm} < \infty;
\end{array}    
\end{equation*}

\begin{equation*}
\begin{array}{l} \displaystyle
\int_Q \rho_{10}^2 D_{2,4}^2 d(t,x) \leqslant C(r) \int_0^T \rho_{10}^2 \int_\Omega |\nabla y|^{2r} |\Delta y_{tt}|^2 dx\ dt
\\ \noalign{\smallskip} \displaystyle
\hspace{1.5cm} \leqslant C(r) \int_0^T \rho_{10}^{2} \rho_9^{-2r} \rho_{11}^{-2} \|\rho_{9} D^3 y\|^{2r} \|\rho_{11}  \Delta y_{tt}\|^2 dt
\\ \noalign{\smallskip} \displaystyle
\hspace{1.5cm} \leqslant C(r) \left(\sup_{\left[0,T\right]}\|\rho_{9}D^3 y\| \right)^{2r} \int_Q \rho_{11}^2|\Delta y_{tt}|^2 d(t,x) \\ \noalign{\smallskip} \displaystyle
\hspace{1.5cm} < \infty.
\end{array}    
\end{equation*}
This finishes the proof of the second claim.


\textbf{CLAIM 3:} $\left\| |\partial_t \nabla h_2(y,p,v)| \right\|_{10} < \infty.$

As before, we begin by considering the pointwise estimate:
\small
\begin{equation*}
    \begin{array}{l}
        |\partial_t \nabla h_2(y,p,v)| \leqslant C\left[(r+1)r|r-1||\nabla y|^{r-2}|\nabla y_t||\Delta y| + (r+1)r|\nabla y|^{r-1}|\Delta y_t| \right. \\ \noalign{\smallskip} \displaystyle
        \hspace{2.9cm} \left.+ (r+1)|\nabla y|^r|D^3 y_t| \right]
        \\ \noalign{\smallskip} \displaystyle
        \hspace{2.2cm} = C(D_{3,1} + D_{3,2} + D_{3,3}). 
    \end{array}
\end{equation*}
\normalsize

Again, if $r=1,$ then we need not consider $D_{3,1},$ since it vanishes. For $r\geqslant 2,$

\begin{equation*}
    \begin{array}{l} \displaystyle
        \int_Q \rho_{10}^2 D_{3,1}^2 d(t,x) \leqslant C(r)\int_0^T \rho_{10}^2 \int_\Omega |\nabla y|^{2(r-2)}|\nabla y_t|^2 |\Delta y|^2 dx\ dt
        \\ \noalign{\smallskip} \displaystyle
        \hspace{1.5cm} \leqslant C(r) \int_0^T \rho_{10}^2 \rho_{9}^{-2r} \|\rho_{9}D^3 y\|^{2(r-2)}\|\rho_{9}D^4 y\|^2\|\rho_{9}\nabla y_t\|^2 dt
        \\ \noalign{\smallskip} \displaystyle
        \hspace{1.5cm} \leqslant C(r) \left(\sup_{\left[0,T\right]}\|\rho_9D^3 y\| \right)^{2(r-2)}\left(\sup_{\left[0,T\right]}\|\rho_9D^4 y\| \right)^2\int_Q \rho_9^2 |\nabla y_t|^2 d(t,x)\\ \noalign{\smallskip} \displaystyle
        \hspace{1.5cm}  < \infty,
    \end{array}
\end{equation*}

\begin{equation*}
    \begin{array}{l} \displaystyle
        \int_Q \rho_{10}^2 D_{3,2}^2 d(t,x) \leqslant C(r)\int_0^T \rho_{10}^2\int_\Omega |\nabla y|^{2(r-1)}|\Delta y_t|^2 dx\ dt
        \\ \noalign{\smallskip} \displaystyle
        \hspace{1.5cm} \leqslant C(r) \int_0^T \rho_9^{-2(r-1)} \|\rho_{9}D^3 y\|^{2(r-1)}\|\rho_{10}\Delta y_t\|^2 dt
        \\ \noalign{\smallskip} \displaystyle
        \hspace{1.5cm} \leqslant C(r)\left(\sup_{\left[0,T\right]}\|\rho_9D^3 y\|^2 \right)^{2(r-1)}\left(\sup_{\left[0,T\right]}\|\rho_{10}\Delta y_t\| \right)^2 < \infty,
    \end{array}
\end{equation*}
and
\begin{equation*}
    \begin{array}{l} \displaystyle
        \int_Q \rho_{10}^2 D_{3,3}^2 d(t,x) \leqslant C(r)\int_0^T\rho_{10}^2 \int_\Omega |\nabla y|^{2r}|D^3 y_t|^2dx\ dt
        \\ \noalign{\smallskip} \displaystyle
        \hspace{1.5cm} \leqslant C(r)\int_0^T \rho_9^{-2r}\|\rho_9D^3 y\|^{2r}\|\rho_{10}D^3 y_t\|^2dt
        \\ \noalign{\smallskip} \displaystyle
        \hspace{1.5cm} \leqslant C(r)\left(\sup_{\left[0,T\right]}\|\rho_{9}D^3 y\| \right)^{2r}\int_Q \rho_{10}^2|D^3 y_t|^2 d(t,x) <\infty.
    \end{array}
\end{equation*}
These inequalities confirm the third claim.


The remaining terms composing the $F-$norm of $h_2(y,p,v),$ $\|h_2(y,p,v)\|_F,$ are norms of lower order derivatives of it, compared to the ones considered above, in adequate weighted $L^2$ spaces. Therefore, these terms are even easier to handle. A similar remark is also true for $\|h_3(y,p,v)\|_F.$ In addition, we can show the continuity of $H$ via estimates which are very similar to the ones that we carried out in the claims above; hence, we omit these computations. This ends the proof of the Lemma.
\end{proof}

\begin{lemma} \label{HIsC1}
The mapping $H$ is strictly differentiable at the origin of $Y,$ with derivative $DH(0,0,0) = \Lambda \in \mathcal{L}(Y,Z)$ given by
\begin{equation} \label{DerivativeAtTheOrigin}
    \Lambda \cdot (y,p,v) = \left( y_t - \nu_0 \Delta y + \nabla p - \chi_\omega v, y(0)\right) = (\Lambda_1\cdot (y,p,v),\Lambda_2\cdot (y,p,v)).
\end{equation}
In fact, $H$ is of class $C^1(Y,Z)$ and, for each $(\overline{y},\overline{p},\overline{v}) \in Y,$ its derivative $DH(\overline{y},\overline{p},\overline{v}) \in \mathcal{L}(Y,Z)$ is given by
\begin{equation} \label{GeneralDH}
    DH(\overline{y},\overline{p},\overline{v})\cdot (y,p,v) = \left(\Lambda_1(\overline{y},\overline{p},\overline{v})\cdot (y,p,v) ,\,\Lambda_2 \cdot (y,p,v) \right),
\end{equation}
where we have written
$$
\begin{cases}
\Lambda_1(\overline{y},\overline{p},\overline{v})\cdot (y,p,v) := \Lambda_1\cdot (y,p,v)  \\
\hspace{3.5cm} - r\nu_1\nabla \cdot \left[  \chi_{\overline{y}} |\nabla \overline{y}|^{r-2}\nabla \overline{y} : \nabla y\, \nabla \overline{y} + |\nabla \overline{y}|^r\nabla y \right] \\
\hspace{3.5cm}+ \left(y \cdot \nabla\right) \overline{y} + \left(\overline{y} \cdot \nabla\right) y ,\\
\nabla \overline{y} : \nabla y := \tr\left( \nabla \overline{y}^\intercal \nabla y \right),\\
\chi_{\overline{y}} \text{ is the indicator function of the set } \{\nabla \overline{y} \neq 0\}.
\end{cases}
$$
\end{lemma}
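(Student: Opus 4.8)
The plan is to use the splitting $H=(H_1,H_2)$ together with the decomposition $H_1=h_1+h_2+h_3$ already introduced in the proof of Lemma~\ref{HIsWellDefined}, and to treat the three summands separately; once each is shown to be of class $C^1$ with the asserted differential, strict differentiability of $H$ at the origin is automatic, since it is exactly property $(iii)$ of Theorem~\ref{Liusternik} in the form given by the Remark that follows it. Now $H_2(y,p,v)=y(0)$ is linear and continuous from $Y$ into $G$, hence $C^\infty$ with $DH_2\equiv\Lambda_2$; and $h_1(y,p,v)=y_t-\nu_0\Delta y+\nabla p-\chi_\omega v$ is linear and continuous from $Y$ into $F$ (the first thing verified in Lemma~\ref{HIsWellDefined}), hence $C^\infty$ with $Dh_1\equiv h_1$, which already produces the term $y_t-\nu_0\Delta y+\nabla p-\chi_\omega v$ in \eqref{DerivativeAtTheOrigin} and \eqref{GeneralDH} because $\nu(0)=\nu_0$. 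So the whole matter reduces to proving that $h_2$ and $h_3$ are of class $C^1$ from $Y$ into $F$, with the differentials read off from \eqref{GeneralDH}.

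For the convective term I would note that $h_3$ is the restriction to the diagonal of the symmetric bilinear map
$$
B\bigl((y_1,p_1,v_1),(y_2,p_2,v_2)\bigr):=\tfrac12\bigl[(y_1\cdot\nabla)y_2+(y_2\cdot\nabla)y_1\bigr],
$$
which is bounded from $Y\times Y$ into $F$ by estimates of exactly the type carried out in Claims~1--3 of Lemma~\ref{HIsWellDefined} (distribute by Leibniz the derivatives required by the $F$-norm and absorb each factor with $H^2(\Omega)\hookrightarrow L^\infty(\Omega)$, $H^1(\Omega)\hookrightarrow L^6(\Omega)$ and the weighted sup-in-time bounds on $y$ from Section~\ref{sec2}). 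A bounded bilinear map is $C^\infty$, with $Dh_3(\overline y,\overline p,\overline v)\cdot(y,p,v)=(y\cdot\nabla)\overline y+(\overline y\cdot\nabla)y$, $D^2h_3=2B$ and all higher differentials zero; in particular $Dh_3(0,0,0)=0$, consistent with \eqref{DerivativeAtTheOrigin}.

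The essential term is $h_2(y,p,v)=-\nu_1\,\nabla\cdot G(\nabla y)$ with the matrix field $G(\xi):=|\xi|^r\xi$, for which one computes
$$
DG(\xi)\eta=|\xi|^r\eta+r\,|\xi|^{r-2}(\xi:\eta)\,\xi
$$
(the second summand read as $0$ at $\xi=0$, which is legitimate since $|DG(\xi)|\leqslant C|\xi|^r$); thus $G\in C^1(\mathbb{R}^{N\times N};\mathbb{R}^{N\times N})$, and $G$ is positively homogeneous of degree $r+1$, so $D^kG$ is homogeneous of degree $r+1-k$. Granting this, I would proceed in three steps. \textbf{(a)} For each fixed $(\overline y,\overline p,\overline v)\in Y$ the candidate derivative, which by the chain rule is $-\nu_1\,\nabla\cdot\bigl[\,DG(\nabla\overline y)\,\nabla y\,\bigr]$ and is precisely the Nemytskii part of $\Lambda_1(\overline y,\overline p,\overline v)$ in the statement, defines a bounded linear operator from $Y$ into $F$: one estimates $\Delta$, $\partial_t^2$ and $\nabla\partial_t$ of the bracketed expression in the weighted $L^2$-spaces exactly as in Claims~1--3, the only change being that one gradient slot now carries the unknown $y$ instead of $\overline y$. \textbf{(b)} Writing the remainder through the integral form of Taylor's theorem,
$$
G(\nabla\overline y+\nabla\widetilde y)-G(\nabla\overline y)-DG(\nabla\overline y)\nabla\widetilde y=\int_0^1\bigl[DG(\nabla\overline y+t\nabla\widetilde y)-DG(\nabla\overline y)\bigr]\nabla\widetilde y\,dt,
$$
and differentiating under the integral, one bounds $\|h_2(\overline y+\widetilde y,\,\cdot\,)-h_2(\overline y,\,\cdot\,)-Dh_2(\overline y,\,\cdot\,)\widetilde y\|_F$ by a quantity that is $o(\|(\widetilde y,\widetilde p,\widetilde v)\|_Y)$. \textbf{(c)} The same kind of estimate, applied now to $DG(\nabla\overline y_1)-DG(\nabla\overline y_2)$ and its spatial/temporal derivatives, gives $\|Dh_2(\overline y_1,\,\cdot\,)-Dh_2(\overline y_2,\,\cdot\,)\|_{\mathcal{L}(Y,F)}\to0$ as $\overline y_1\to\overline y_2$ in $Y$, so $h_2\in C^1(Y,F)$. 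Evaluating at the origin, $DG(0)=0$ forces $Dh_2(0,0,0)=0$, which together with $Dh_1\equiv h_1$ and $Dh_3(0,0,0)=0$ yields \eqref{DerivativeAtTheOrigin}, and \eqref{GeneralDH} is the sum of the three differentials.

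The hard part will be step \textbf{(c)} --- the norm-continuity of $\overline y\mapsto Dh_2(\overline y)$ --- and it is here that the restriction $r\in\{1,2\}$ or $r\geqslant3$ enters and where I expect the delicate bookkeeping to lie. Differentiating $G$ brings the factor $|\nabla\overline y|^{r-2}$ near $\{\nabla\overline y=0\}$, and the difference and higher-derivative estimates demanded by the $F$-norm produce lower exponents such as $r-3$ (for instance in $|\nabla\overline y_1|^{r-2}-|\nabla\overline y_2|^{r-2}$ via the mean value inequality). By homogeneity each term carries enough explicit factors of $\nabla\overline y$ in the numerators that, for $r=2$ or $r\geqslant3$, every net exponent encountered is nonnegative and the substitution estimates close with only polynomial growth, again powered by the Sobolev embeddings and the weighted sup-norms; while for $r=1$ the genuinely singular contributions come multiplied by the coefficient $(r+1)r\,|r-1|$ and its analogues, which vanish --- this is the mechanism already visible in the $D_{i,j}$ of Lemma~\ref{HIsWellDefined}. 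One small additional point is that $\chi_{\overline y}$ is not continuous in $\overline y$; this causes no difficulty because it multiplies a factor vanishing wherever it jumps, so $Dh_2(\overline y)$ is unambiguously defined and the estimates of \textbf{(a)}--\textbf{(c)} are unaffected. Finally, the lower-order contributions to $\|\cdot\|_F$ and the continuity assertion for $h_3$ are handled by strictly easier versions of the same computations, so I would only indicate them.
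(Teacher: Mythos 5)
Your approach is correct and yields the full statement, but it is organized differently from the paper's, which only proves the weaker assertion it actually needs and defers the rest. Specifically, the paper's proof establishes only strict differentiability at $(0,0,0)$ --- exactly condition (iii) of Theorem~\ref{Liusternik} --- by writing
$$
H_1(y,p,v)-H_1(\overline y,\overline p,\overline v)-\Lambda_1\cdot(y-\overline y,p-\overline p,v-\overline v)=-\nu_1D_1+D_2,
$$
with $D_1=\nabla\cdot\bigl(|\nabla\overline y|^r\nabla\overline y-|\nabla y|^r\nabla y\bigr)$ and $D_2=(\overline y\cdot\nabla)\overline y-(y\cdot\nabla)y$, and then bounding $\|\Delta D_1\|_9$ directly by a pointwise split into seven terms $D_{1,1},\dots,D_{1,7}$ that mirror the $D_{i,j}$ of Lemma~\ref{HIsWellDefined}, concluding $\nu_1\|D_1\|_F+\|D_2\|_F\leqslant C(r)\,\delta\,\|(y-\overline y,p-\overline p,v-\overline v)\|_Y$ for $\|(y,p,v)\|_Y,\|(\overline y,\overline p,\overline v)\|_Y\leqslant\delta<1$. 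The general $C^1$ claim is asserted with ``there is no additional difficulty.'' You instead set up the Nemytskii operator $G(\xi)=|\xi|^r\xi$, compute $DG$, and run a bounded-candidate / Taylor-remainder / derivative-continuity scheme to establish the full $C^1$ statement, invoking the Remark after Theorem~\ref{Liusternik} to recover condition (iii). Both routes reduce to the same weighted product estimates (Sobolev embeddings plus the weighted sup bounds of Section~\ref{sec2}), and your identification of where the restriction on $r$ enters --- the exponents $r-2$, $r-3$ that appear after differentiating and differencing $DG$, with the $r=1$ and $r=2$ cases rescued by vanishing coefficients --- matches exactly the mechanism visible in the paper's $D_{1,j}$. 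What your organization buys is a cleaner logical structure (the derivative is computed rather than guessed and the $C^1$ claim is actually proved); what it costs is that step (c), the $\mathcal L(Y,F)$-norm continuity of $\overline y\mapsto Dh_2(\overline y)$, is an extra layer of estimates that the paper's more economical remainder-at-the-origin bound avoids altogether. Your proposal leaves steps (a)--(c) at the level of a plan with the hard bookkeeping deferred, which is comparable to the level of detail the paper itself supplies for its single representative estimate; as outlined, the approach is sound. One small remark: your formula $DG(\xi)\eta=|\xi|^r\eta+r|\xi|^{r-2}(\xi:\eta)\xi$ is the correct one, and it reveals that the factor $r$ in the paper's displayed $\Lambda_1(\overline y,\overline p,\overline v)$ should multiply only the first summand inside the divergence, not the term $|\nabla\overline y|^r\nabla y$ --- a typographical slip in the statement that your calculation correctly does not reproduce.
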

\begin{proof}
We will only prove the first claim, i.e., that $H$ is strictly differentiable at the origin $(0,0,0) \in Y,$ with $DH(0,0,0)$ being onto $Z.$ There is no additional difficulty to prove the lemma in its full force.
    
We write $H = (H_1,H_2)$ as in (\ref{Decomp}) of Lemma \ref{HIsWellDefined}. Again, it is only necessary to investigate $H_1,$ since $H_2$ is linear and continuous, and therefore $C^\infty.$ Given $(y,p,v),(\overline{y},\overline{p},\overline{v}) \in Y,$ we note that
$$
H_1(y,p,v) - H_2(\overline{y},\overline{p},\overline{v}) - \Lambda_1 \cdot (y-\overline{y}, p - \overline{p},v-\overline{v}) = -\nu_1 D_1 + D_2,
$$
where
$$
D_1 :=  \nabla \cdot\left(|\nabla \overline{y}|^r \nabla \overline{y} - |\nabla y|^r\nabla y \right),
$$
$$
D_2 := (\overline{y} \cdot \nabla)\overline{y} - (y \cdot \nabla) y.
$$
Let us take two positive real numbers, $\epsilon$ and $\delta,$ and we suppose $\|(y,p,v)\|_Y \leqslant \delta,$ $\|(\overline{y},\overline{p},\overline{v})\|_Y \leqslant \delta.$ We must show that we can take $\delta = \delta(\epsilon)$ such that
$$
\|H_1(y,p,v) - H_2(\overline{y},\overline{p},\overline{v}) - \Lambda_1 \cdot (y-\overline{y}, p - \overline{p},v-\overline{v})\|_F \leqslant \epsilon\|(y-\overline{y}, p - \overline{p}, v-\overline{v})\|_Y.
$$
We assume, without loss of generality, that $\delta < 1.$ It is enough to show that
\begin{equation} \label{EnoughToShow}
    \nu_1\|D_1\|_F + \|D_2\|_F \leqslant \epsilon\|(y-\overline{y}, p-\overline{p}, v- \overline{v})\|_Y,
\end{equation}
for a suitable $\delta = \delta(\epsilon).$ To begin with, we observe that
\begin{equation*}
    \begin{array}{l}
        |\Delta D_1| \leqslant C (r+1)r\big[|r-1|\left||\nabla \overline{y}|^{r-2} - |\nabla y|^{r-2} \right||D^2\overline{y}|^3
        \\ \noalign{\smallskip} \displaystyle
        \hspace{1.5cm} +\ |r-1||\nabla y|^{r-2}\left(|D^2\overline{y}|^2 + |D^2 y|^2 \right)|\nabla \overline{y} -\nabla y|
        \\ \noalign{\smallskip} \displaystyle
        \hspace{1.5cm} +\ |r-1|\left(|\nabla \overline{y}|^{r-2} + |\nabla y|^{r-2} \right)|\nabla\overline{y} -\nabla y||D^2\overline{y}||D^3\overline{y}|
        \\ \noalign{\smallskip} \displaystyle
        \hspace{1.5cm} +\ |\nabla y|^{r-1}|D^2\overline{y}-D^2 y||D^3\overline{y}| +|\nabla y|^{r-1}|D^2 y||D^3(\overline{y}-y)|
        \\ \noalign{\smallskip} \displaystyle
        \hspace{1.5cm} +\ |\nabla\overline{y}|^{r-1}|\nabla(\overline{y}-y)||D^4 \overline{y}| + |\nabla y|^r|D^4(\overline{y}-y)| \big]
        \\ \noalign{\smallskip} \displaystyle
        \hspace{1.0cm} = C(r+1)r\left(D_{1,1} + \cdots + D_{1,7} \right).
    \end{array}
\end{equation*}
If $r=1,$ then $D_{1,1} \equiv D_{1,2} \equiv D_{1,3} \equiv 0,$ whereas for $r=2$ we also have $D_{1,1} \equiv 0.$ If $r \geqslant 3,$ we follow estimates similar to the ones we developed in Lemma \ref{HIsWellDefined}, and make use of the immersions we described there, in such a way that
\small
\begin{equation*}
    \begin{array}{l} \displaystyle
        \int_Q \rho_9^2 D_{1,1}^2 d(t,x)
        \\ \noalign{\smallskip} \displaystyle
        \hspace{1.2cm} \leqslant C(r)\int_0^T \rho_9^2\|D^3 (\overline{y}-y)\|^2\left(\|D^3 \overline{y}\|^{2(r-3)} + \|D^3 y\|^{2(r-3)} \right)\|D^2 \overline{y}\|_{L^6(\Omega)}^6 dt
        \\ \noalign{\smallskip} \displaystyle
        \hspace{1.2cm} \leqslant C(r) \int_0^T \rho_9^2\|D^3 (\overline{y}-y)\|^2\left(\|D^3 \overline{y}\|^{2(r-3)} + \|D^3 y\|^{2(r-3)} \right)\|D^3\overline{y}\|^6 dt
        \\ \noalign{\smallskip} \displaystyle
        \hspace{1.2cm} = C(r) \int_0^T \rho_9^{-2r}\|\rho_9D^3 (\overline{y}-y)\|^2\left(\|\rho_9D^3 \overline{y}\|^{2(r-3)} + \|\rho_9D^3 y\|^{2(r-3)} \right)\|\rho_9D^3\overline{y}\|^6 dt
        \\ \noalign{\smallskip} \displaystyle
        \hspace{1.2cm} \leqslant C(r)\delta^{2r}\|(y-\overline{y},p - \overline{p}, v-\overline{v})\|_{Y}^2.
    \end{array}
\end{equation*}
\normalsize
Next, for $r\geqslant 2,$ 
\begin{equation*}
    \begin{array}{l} \displaystyle
        \int_Q \rho_9^2 D_{1,2}^2 d(t,x)
        \\ \noalign{\smallskip} \displaystyle
        \hspace{1.2cm} \leqslant C(r)\int_0^T \rho_9^2\|D^3 y\|^{2(r-2)}\|D^3(\overline{y}-y)\|^2\left(\|D^2\overline{y}\|_{L^4(\Omega)}^4 + \|D^2 y\|_{L^4(\Omega)}^4 \right)dt
        \\ \noalign{\smallskip} \displaystyle
        \hspace{1.2cm} \leqslant C(r)\int_0^T\rho_9^2\|D^3 y\|^{2(r-2)}\|D^3(\overline{y}-y)\|^2\left(\|D^3 \overline{y}\|^4 + \|D^3 y\|^4 \right)dt
        \\ \noalign{\smallskip} \displaystyle
        \hspace{1.2cm} \leqslant C(r)\delta^{2r}\|(y-\overline{y}, p-\overline{p}, v-\overline{v})\|_Y^2,
    \end{array}
\end{equation*}
\small
\begin{equation*}
    \begin{array}{l} \displaystyle
        \int_Q \rho_9^2D_{1,3}^2d(t,x)
        \\ \noalign{\smallskip} \displaystyle
        \hspace{1.0cm} \leqslant C(r)\int_0^T\rho_9^2\left(\|D^3 \overline{y}\|^{2(r-2)} + \|D^3 y\|^{2(r-2)} \right)\|D^3(\overline{y}-y)\|^2\|D^4 \overline{y}\|^2\|D^3 \overline{y}\|^2 dt
        \\ \noalign{\smallskip} \displaystyle
        \hspace{1.0cm} \leqslant C(r)\delta^{2r}\|(y-\overline{y},p-\overline{p},v-\overline{v})\|_Y^2.
    \end{array}
\end{equation*}
\normalsize
Now, for every $r \geqslant 1,$
\begin{equation*}
    \begin{array}{l} \displaystyle
        \int_Q \rho_9^2D_{1,4}^2d(t,x) \leqslant C(r)\int_0^T \rho_9^2\|D^3 y\|^{2(r-1)}\|D^4(\overline{y}-y)\|^2\|D^3\overline{y}\|^2 dt
        \\ \noalign{\smallskip} \displaystyle
        \hspace{1.2cm} \leqslant C(r)\delta^{2r}\|(y-\overline{y},p-\overline{p},v-\overline{v})\|_Y^2,
    \end{array}
\end{equation*}
\begin{equation*}
    \begin{array}{l} \displaystyle
        \int_Q \rho_9^2D_{1,5}^2d(t,x) \leqslant C(r)\int_0^T \rho_9^2 \|D^3 y\|^{2(r-1)}\|D^4 y\|^2\|D^3 (\overline{y}-y)\|^2 dt
        \\ \noalign{\smallskip} \displaystyle
        \hspace{1.2cm} \leqslant C(r) \delta^{2r}\|(y-\overline{y},p-\overline{p}, v-\overline{v})\|_Y^2,
    \end{array}
\end{equation*}
\begin{equation*}
    \begin{array}{l} \displaystyle
        \int_Q \rho_9^2D_{1,6}^2d(t,x) \leqslant C(r)\int_0^T \rho_9^2\|D^3 \overline{y}\|^{2(r-1)}\|D^3(\overline{y}-y)\|^2\|D^4\overline{y}\|^2 dt
        \\ \noalign{\smallskip} \displaystyle
        \hspace{1.2cm} \leqslant C(r)\delta^{2r}\|(y-\overline{y},p-\overline{p}, v-\overline{v})\|_Y^2,
    \end{array}
\end{equation*}
\begin{equation*}
    \begin{array}{l} \displaystyle
        \int_Q \rho_9^2D_{1,7}^2d(t,x) \leqslant C(r)\int_0^T \rho_9^2\|D^3 y\|^{2r}\|D^4(\overline{y}-y)\|^2 dt
        \\ \noalign{\smallskip} \displaystyle
        \hspace{1.2cm} \leqslant C(r)\delta^{2r}\|(y-\overline{y},p-\overline{p}, v-\overline{v})\|_Y^2.
    \end{array}
\end{equation*}
Summing up, the computations we carried out above yield
$$
\|\Delta D_1\|_9 \leqslant C(r)\delta^{r}\|(y-\overline{y},p-\overline{p},v-\overline{v})\|_Y.
$$
We can treat the remaining terms composing the $F-$norm of $D_1$ likewise, as we argued in Lemma \ref{HIsWellDefined}. Dealing with $D_2$ is even simpler, since it involves lower order derivatives of $y.$ In this way, we deduce that
$$
\nu_1\|D_1\|_F + \|D_2\|_F \leqslant C(r)\delta\|(y-\overline{y},p-\overline{p},v-\overline{v})\|_Y.
$$
Thus, it suffices to take any positive $\delta < \min(1,\epsilon/C(r))$ in order to finish the proof.
\end{proof}    
    
\begin{lemma} \label{BijectiveDHAtOrigin}
The linear operator $DH(0,0,0) : Y \rightarrow Z$ is continuous and onto. Furthermore, there exists a constant $M>0$ such that
\begin{equation} \label{Injective}
    \|(y,p,v)\|_Y \leqslant M\|DH(0,0,0)\cdot (y,p,v)\|_Z
\end{equation}
\end{lemma}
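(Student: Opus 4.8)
The continuity of $\Lambda := DH(0,0,0)$ is immediate from the way the norms are set up. Given $(y,p,v) \in Y$, write $f := Ly + \nabla p - \chi_\omega v$ and $y_0 := y(0)$; then $\Lambda(y,p,v) = (f,y_0)$ belongs to $F \times G = Z$, and, comparing the definition \eqref{NormOfY} of $\|\cdot\|_Y$ with the norms of $F$ and of $G$, every term making up $\|(f,y_0)\|_Z^2 = \|f\|_F^2 + \|y_0\|_{H^5(\Omega)^N \cap V}^2$ already occurs among the nonnegative terms of $\|(y,p,v)\|_Y^2$. Hence $\|\Lambda(y,p,v)\|_Z \leqslant \|(y,p,v)\|_Y$, so $\Lambda \in \mathcal{L}(Y,Z)$ with norm at most $1$.

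For the surjectivity together with \eqref{Injective}, the key observation is that the regularity and compatibility conditions built into $Z = F \times G$ are tailored to be exactly the cumulative hypotheses of Theorem \ref{ControlOfLinearSystem} and of the weighted energy lemmas of Subsection \ref{Additional} --- Lemma \ref{LemmaAdditional1}, Lemma \ref{RegularControl}, the intermediate lemma producing \eqref{estimate3}--\eqref{estimate4}, and Lemma \ref{LemmaAdditional3}. The plan is, given $(f,y_0) \in Z$: first, invoke Theorem \ref{ControlOfLinearSystem} (applicable since $\rho_0 f \in L^2(Q)^N$ and $y_0 \in H$) to produce a control $v \in X_2$ and the associated state $y$, recovering the mean-zero pressure $p$ from $-\Delta p = \nabla \cdot (\chi_\omega v + f)$ so that $\nabla p = \chi_\omega v + f - Ly$; this gives a solution of \eqref{Linear} with datum $(y_0,f)$, the estimate \eqref{estimate0}, and $y(T) = 0$. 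Second, feed this triple successively into Lemmas \ref{LemmaAdditional1}, \ref{RegularControl}, the intermediate lemma, and Lemma \ref{LemmaAdditional3}, in that order, so as to obtain \eqref{estimate1}--\eqref{estimate6} and the weighted control regularity of Lemma \ref{RegularControl}; since the weights $\rho_6,\dots,\rho_{11},\zeta,\widehat{\zeta}$ are continuous, positive, and blow up as $t \uparrow T$, hence are bounded below on $[0,T]$, these estimates both certify the remaining memberships defining $Y$ (namely $y_t,\nabla y \in L^2(Q)$, the higher $y$-regularity, and the weighted regularity of $v$) and show $\Lambda(y,p,v) = (f,y_0)$. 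Third, assemble the norm bound: the only terms of \eqref{NormOfY} not already part of $\|(f,y_0)\|_Z^2$ are $\int_Q \rho_3^2 |y|^2$ and the six $v$-terms, and these are controlled by $\kappa_0(y_0,f)$ --- through \eqref{estimate0} and item (a) of Lemma \ref{RegularControl} --- and by $\kappa_1(y_0,f)$ --- through item (b) of Lemma \ref{RegularControl} --- while $\kappa_0(y_0,f) + \kappa_1(y_0,f) \leqslant C\|(f,y_0)\|_Z^2$; therefore $\|(y,p,v)\|_Y^2 \leqslant C\|(f,y_0)\|_Z^2$. This yields that $\Lambda$ is onto, and since the constructed preimage is the one singled out by the Pontryagin characterization \eqref{PontryaginMinPrinciple}, the inequality \eqref{Injective} holds with $M := C^{1/2}$.

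The hard part is not analysis but bookkeeping: one must verify once and for all that the chain of compatibility conditions packaged into $F$ and $G$ matches, step by step, the hypotheses required by Theorem \ref{ControlOfLinearSystem} and by each lemma of Subsection \ref{Additional} --- in particular that $f(0) \in [H^3(\Omega) \cap H^1_0(\Omega)]^N$ together with the equation \eqref{Linear} evaluated at $t = 0$ delivers $Af(0) \in H^1_0(\Omega)^N$ as demanded in the definition of $Y$ --- and that the $Y$-norm \eqref{NormOfY} is indeed dominated by $\kappa_5(y_0,f)$, which is in turn dominated by $\|(f,y_0)\|_Z^2$. The only genuinely analytic point, and a routine one, is the recovery of the pressure with the weighted regularity implicit in the membership $f \in F$: this is elliptic regularity applied to $-\Delta p = \nabla \cdot(\chi_\omega v + f)$, using the already-established weighted bounds on $\chi_\omega v$ and the given bounds on $f$.
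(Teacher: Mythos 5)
Your argument is correct and follows the paper's proof step by step: given $(f,y_0)\in Z$, invoke Theorem \ref{ControlOfLinearSystem} for the preimage $(y,p,v)$, then use the weighted estimates of Subsection \ref{Additional} together with Lemma \ref{RegularControl} to verify the membership $(y,p,v)\in Y$ and to obtain the bound \eqref{Injective} for this particular right inverse. The one inaccuracy is your claim that $Af(0)\in H^1_0(\Omega)^N$ is ``delivered by the equation \eqref{Linear} at $t=0$'': the forcing $f$ is prescribed, not constrained by the PDE, so this is a genuine compatibility condition on $f$ that must already be encoded in $F$ (the paper's stated definition of $F$ is in fact slightly looser on this point than what the definition of $Y$ and the hypotheses of Lemma \ref{LemmaAdditional3} demand).
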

\begin{proof}
The continuity of $DH(0,0,0)$ follows promptly from the definition of the norms of $Y$ and $Z.$ As for the surjectiveness of this mapping, let us consider $(f,y_0) \in Z.$ We take $(y,p,v)$ as the state-pressure-control tuple given by Theorem \ref{ControlOfLinearSystem}. By the estimates we proved in subsection \ref{Additional}, namely (\ref{estimate0}), (\ref{estimate1}), (\ref{estimate2}), (\ref{estimate3}), (\ref{estimate4}), (\ref{estimate5}), and (\ref{estimate6}), together with Lemma \ref{RegularControl}, the membership $(y,p,v) \in Y$ is valid. Moreover,
$$
DH(0,0,0)\cdot (y,p,v) = (y_t - \nu_0\Delta y +\nabla p - \chi_\omega v, y(0)) = (f,y_0),
$$
where the last equality holds by the choice of $(y,p,v);$ hence, $DH(0,0,0)$ is onto $Z.$ By the aforementioned estimates, (\ref{Injective}) follows easily. This establishes the lemma.       
\end{proof}

\subsection{Proof of Theorem \ref{MainThm}}

According to Lemmas \ref{HIsWellDefined}, \ref{HIsC1} and \ref{BijectiveDHAtOrigin}, it is licit to apply Theorem \ref{Liusternik}. This result allows us to deduce the existence of $\eta > 0$ such that, for each $(f,y_0) \in Z$ subject to 
\begin{equation} \label{SmallData}
    \|(f,y_0)\|_Z < \eta,
\end{equation}
the equation
\begin{equation} \label{Soluble}
    H(y,p,v) = (f,y_0)
\end{equation}
has a solution $(y,p,v) \in Y$ which satisfies 
\begin{equation} \label{BddnssOfSoln}
    \|(y,p,v)\|_Y < B \eta,
\end{equation}
for a suitable constant $B > 0$ which is independent of $\eta.$ Explicitly, we can take $B := (M^{-1} - \delta)^{-1},$ where $M>0$ is given by Lemma \ref{BijectiveDHAtOrigin} (cf. (\ref{Injective})), and where we select the positive constant $\delta < M^{-1}$ such that $H$ satisfies condition \textit{(iii)} of Theorem \ref{Liusternik}. Such a constant $\delta$ does in fact exist by Lemma \ref{HIsC1}. 

In particular, taking $f\equiv 0,$ inequality (\ref{SmallData}) reads
\begin{equation}
    \|y_0\|_{H^5(\Omega)^N} < \eta.
\end{equation}
Since $(y,p,v) \in Y,$ we have \eqref{eq:ControlCondn}, and alonside \eqref{Soluble}, we see that $(y,p,v)$ does solve \eqref{Model}.

\section{Numerical analysis} \label{sec4}

\subsection{Proof of the convergence of the algorithm}

The proof of this result is straightforward once we have established Lemmas \ref{HIsC1} and \ref{BijectiveDHAtOrigin}. We present it here for completeness.

Firstly, we observe that Lemma \ref{BijectiveDHAtOrigin} ensures that $(y^{n+1},p^{n+1},v^{n+1})$ is well-defined in terms of $(y^n,p^n,v^n),$ since in this lemma we showed that $DH(0,0,0)$ is bijective. Furthermore, we have $\|DH(0,0,0)^{-1}\|_{\mathcal{L}(Z,Y)} \leqslant M,$ according to the notations of this lemma. 

Next, we take $y_0 \in G,$ with $\|y_0\|_{H^5(\Omega)^N} < \eta,$ and we let $(y,p,v) \in Y$ be the solution of $H(y,p,v) = (0,y_0).$ We also consider $0<\epsilon < (2M)^{-1}.$ By Lemma \ref{HIsC1}, there exists $\delta >0$ such that the relations
$$
(y,p,v)\in Y \text{ and } (\overline{y},\overline{p},\overline{v}) \in Y,\ \|(y-\overline{y}, p-\overline{p}, v-\overline{v})\|_Y \leqslant \delta
$$
imply
$$
\|DH(y,p,v) - DH(\overline{y},\overline{p},\overline{v})\|_{\mathcal{L}(Y,Z)} \leqslant \epsilon.
$$
Shrinking $\eta,$ if necessary, we can assume $\eta \leqslant \delta.$ Employing Lemma \ref{HIsC1} once more, we find $\kappa = \kappa(y,p,v) \in \left]0,1\right[$ such that $(\overline{y},\overline{p},\overline{v}) \in Y$ and $\|(y-\overline{y},p-\overline{p},v-\overline{v})\|_Y \leqslant \kappa$ together imply 
$$
\|H(\overline{y},\overline{p},\overline{v}) - H(y,p,v) - DH(y,p,v)\cdot (\overline{y}-y,\overline{p}-p,\overline{v}-v)\|_Z \leqslant \epsilon\|(\overline{y}-y,\overline{p}-p,\overline{v}-v)\|_Y.
$$
We write $e^n := (y^n, p^n, v^n) - (y,p,v),$ and let us assume $\|e^0\|_Y \leqslant \kappa.$ By the algorithm,
\begin{align*}
    e^{n+1} =& - DH(0,0,0)^{-1}\left[H(y^n,p^n,v^n)-H(y,p,v) - DH(y,p,v)               \cdot e^n \right] \\
             & - DH(0,0,0)^{-1}\left[DH(y,p,v) - DH(0,0,0) \right]\cdot e^n,
\end{align*}
whence
\begin{align} \label{Numeric1}
  \begin{split}    
    \|e^{n+1}\|_Y \leqslant M&\left\{\|H(y^n,p^n,v^n)-H(y,p,v) - DH(y,p,v)e^n\| \right. \\
    &\hspace{0.25cm}\left.+ \|\left[DH(y,p,v) - DH(0,0,0) \right]\cdot e^n\|\right\}.
  \end{split}
\end{align}
Assuming inductively that $\|e^n\|_Y \leqslant \kappa,$ which holds true for $n=0,$ it follows that
\begin{equation} \label{Numeric2}
    \|e^{n+1}\|_Y \leqslant 2M\epsilon\|e^n\|_Y.
\end{equation}
Thus, we also have $\|e^{n+1}\|_Y \leqslant \kappa.$ By induction, it follows that $\|e^n\|_Y\leqslant \kappa,$ for every $n;$ hence, it is always possible to pass from (\ref{Numeric1}) to (\ref{Numeric2}). Let us take $\theta := 2M\epsilon.$ Applying inequality (\ref{Numeric2}) iteratively in $n,$ we conclude that
$$
\|e^n\|_Y \leqslant \theta^n\|e_0\|_Y.
$$
This proves Theorem \ref{NumThm}.

\subsection{Implementation of the algorithm}

To implement the fixed-point numerical algorithm, we proceed in two steps. Firstly, it is necessary to implement a solver for the control problem of the forced Stokes system. We begin with the variational problem (\ref{VariationalProblem}) and adequately reformulate it to achieve a mixed formulation, as in \cite{fernandez2017numerical,fernandez2015theoretical}. Below, we recall the main ideas for $N=2.$ After treating the linear problem, we iterate it by updating the source term according to our algorithm.

Under the notations of the proof of Theorem \ref{ControlOfLinearSystem} (see (\ref{PontryaginMinPrinciple})), we define $u := \rho_3^{-1}\left(L^*\varphi + \nabla \pi\right),$ $m := \rho_4^{-1}\varphi,$ and $k := \rho_4^{-1}\pi.$ Let us introduce the spaces
\scriptsize
$$
Z := \left\{ (m^\prime,k^\prime) : m^\prime \in L^2(0,T; H^1_0(\Omega)^2,\ m^\prime_t \in L^2(Q)^2,\ k^\prime \in L^2(0,T; H^1(\Omega)),\ \int_\Omega k^\prime\ dx = 0 \text{ a.e. } \right\},
$$
\normalsize
and
$$
W:= L^2(Q)^2 \times Z,\ M := L^2(0,T;H^1_0(\Omega)^2)\times L^2(Q),
$$
as well as the bilinear forms $b_1 : W \times W \rightarrow \mathbb{R},$ $B,B_1 : W \times M \rightarrow \mathbb{R}$ by
$$
b_1((u,m,k),(u^\prime, m^\prime, k^\prime)) := \int_Q \left\{u \cdot u^\prime + \chi m \cdot m^\prime \right\}d(t,x),
$$
\small
$$
B((u,m,k),(\lambda,\mu)) := \int_Q \left\{ \lambda \cdot \left[u+\rho_3^{-1}\left(\rho_4 m\right)_t + \nabla\left(\rho_4 k \right) \right] - \nabla\left( \rho_3^{-1}\lambda \right): \nabla\left(\rho_4 m \right) \right\} d(t,x)
$$
\normalsize
and
$$
B_1((u,m,k),(\lambda,\mu)) = B((u,m,k),(\lambda,\mu)) - \int_Q \rho_3^{-1} \mu \nabla \cdot\left(\rho_4 m \right) d(t,x).
$$
The last element we introduce is the linear form $\widetilde{\Lambda} : W \rightarrow \mathbb{R},$ which is given by
$$
\langle \widetilde{\Lambda}, (u,m,k) \rangle := \int_Q \rho_4 f m\ d(t,x) + \int_\Omega (\rho_4 m)(0)y_0\ dx.
$$
We reformulate problem (\ref{VariationalProblem}) as: find $(u,m,k) \in W$ and multipliers $(\lambda,\mu) \in M$ such that
\footnotesize
$$
\begin{cases}
b_1((u,m,k),(u^\prime,m^\prime,k^\prime)) + B_1((u^\prime,m^\prime,k^\prime),(\lambda,\mu)) = \langle \widetilde{\Lambda}, (u^\prime,m^\prime,k^\prime) \rangle, \text{ for all } (u^\prime,m^\prime,k^\prime) \in W, \\
B_1((u,m,k),(\lambda^\prime,\mu^\prime)) = 0, \text{ for all } (\lambda^\prime,\mu^\prime) \in M.
\end{cases}
$$
\normalsize
After we solve it, we recover the control and corresponding velocity field of the linear control problem (\ref{VariationalProblem}) via 
$$
v = - \chi \rho_4^{-1} m \text{ and } y = \rho_{3}^{-1} u.
$$
If we assume that $\Omega$ is polygonal, it is simple to find finite dimensional approximations $W_h$ and $M_h$ of the spaces $W$ and $M.$

\subsection{A numerical experiment}

In the sequel, we will employ the FreeFem++ library of C++; see \url{http://www.freefem.org/ff++} for more informations. In Table \ref{tab:Params4Sim}, we describe the datum we used to apply the quasi-Newton method for (\ref{Model}).

\begin{table}[!htp]
\centering
\begin{tabular}{@{}cccccccc@{}}
\toprule
$N$ & $\Omega$             & $\omega$                 & $T$   & $y_0$                  & $\nu_0$ & $\nu_1$ & $r$ \\ \midrule
$2$ & $\left]0,3\right[^2$ & $\left]0.5,2.5\right[^2$ & $1$ & $\nabla \times \psi_0$ & $100$   & $0.01$  & $2$ \\ \bottomrule
\end{tabular}
\caption{Data we used in the simulations. Above, we denote by $\psi_0$ the initial streamline function, which we fix as $\psi_0(x_1,x_2) = (x_1x_2)^2(1-x_1)^2(1-x_2)^2.$}
\label{tab:Params4Sim}
\end{table}
We illustrate in Figure \ref{fig:Meshes} the $2D$ mesh of $\Omega,$ and the $3D$ mesh of the cylinder $Q.$ In Figure \ref{fig:initCondn}, we show both components of the initial state $y(0) = y_0.$ 

\begin{figure}[!ht]
    \centering
    \subfigure[Horizontal ($2D$) mesh]{\includegraphics[width=40mm, scale=0.4]{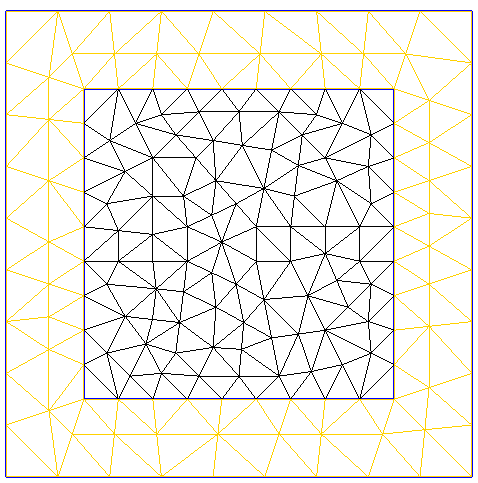}}
    \subfigure[Three-dimensional mesh]{\includegraphics[width=60mm, scale=0.5]{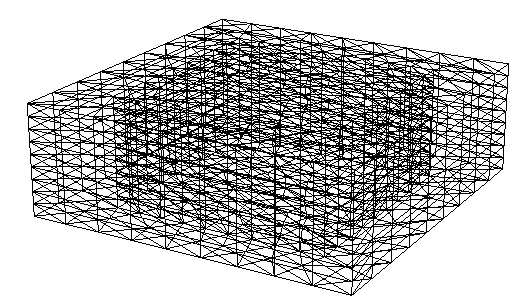}}
    \caption{Meshes. The volume is nine, there are $8748$ tetrahedra, $1810$ vertices, and $1944$ boundary triangles.}
    \label{fig:Meshes}
\end{figure}
\FloatBarrier

\begin{figure}[!ht]
    \centering
    \subfigure[$y_1(0)$]{\includegraphics[width=60mm, scale=0.5]{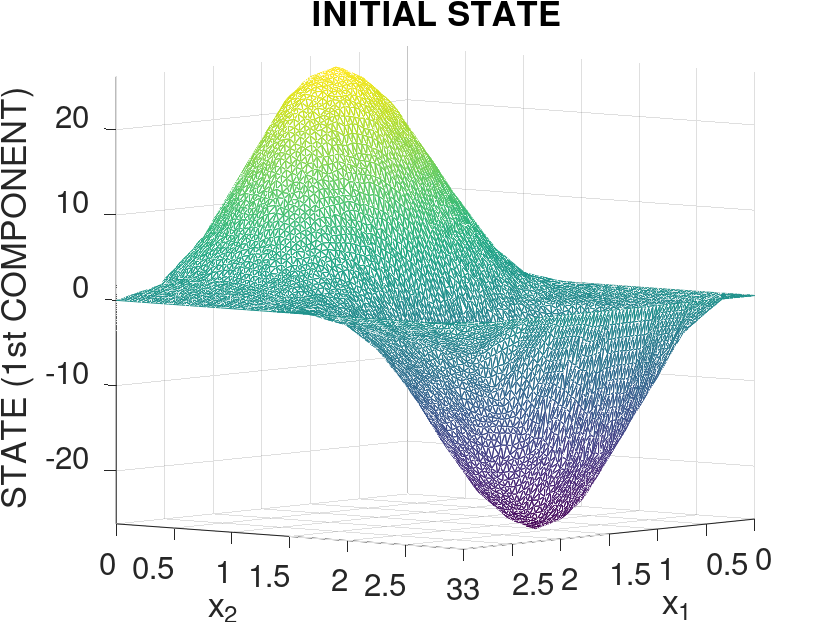}}
    \subfigure[$y_2(0)$]{\includegraphics[width=60mm, scale=0.5]{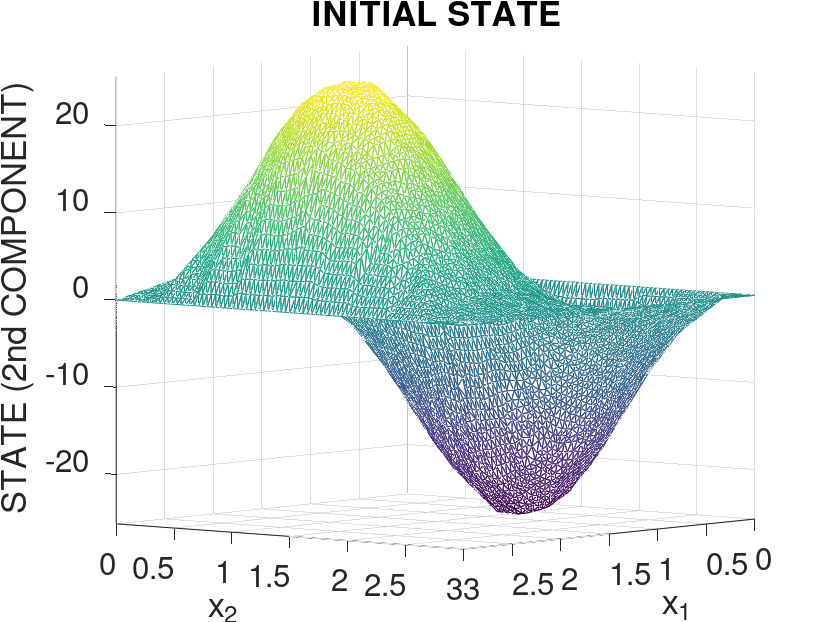}}
    \caption{Initial condition $y_0.$}
    \label{fig:initCondn}
\end{figure}
\FloatBarrier

Our stopping criterion is
$$
\frac{\|y^{n+1}-y^n\|_{L^2(Q)}}{\|y^n\|_{L^2(Q)}} \leqslant \epsilon,
$$
with $\epsilon = 10^{-8}.$ We took as the initial guess $(y^0,p^0,v^0) = (0,0,0).$ We attained convergence after six iterates, with a rate of $4.68$. 

We begin to illustrate the overall behavior of the control and of the state we computed through the plots of some cross-sections in space of them. On the one hand, for the control, we plot the $x_1 = 0.9$ and $x_1 = 2.1$ cuts in Figures \ref{fig:FirstCrossSecControl} and \ref{fig:SecondCrossSecControl}, respectively. On the other hand, we provide the surfaces comprising the values of the state components, relative to these cuts, in Figures \ref{fig:FirstCrossSecState} and \ref{fig:SecondCrossSecState}. 

\begin{figure}[!ht]
    \centering
    \subfigure[$v_1(\cdot,0.9,\cdot)$]{\includegraphics[width=60mm, scale=0.5]{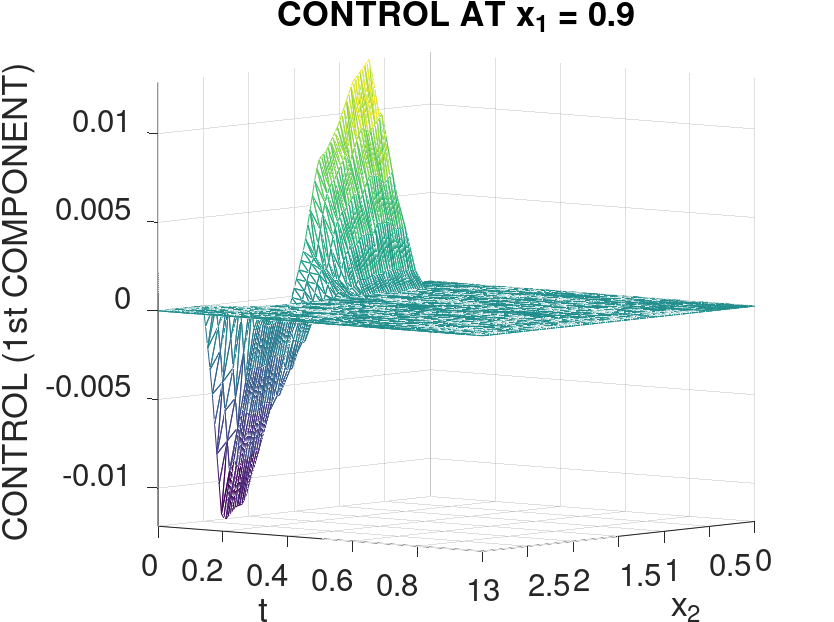}}
    \subfigure[$v_2(\cdot,0.9,\cdot)$]{\includegraphics[width=60mm, scale=0.5]{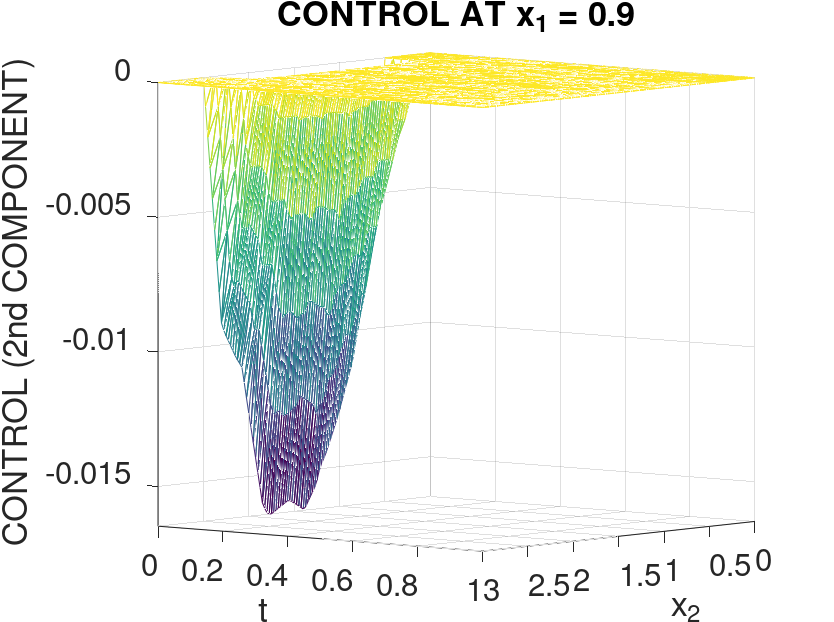}}
    \caption{Cross sections $x_1 = 0.9$ of the control components.}
    \label{fig:FirstCrossSecControl}
\end{figure}
\FloatBarrier

\begin{figure}[!ht]
    \centering
    \subfigure[$v_1(\cdot,2.1,\cdot)$]{\includegraphics[width=60mm, scale=0.5]{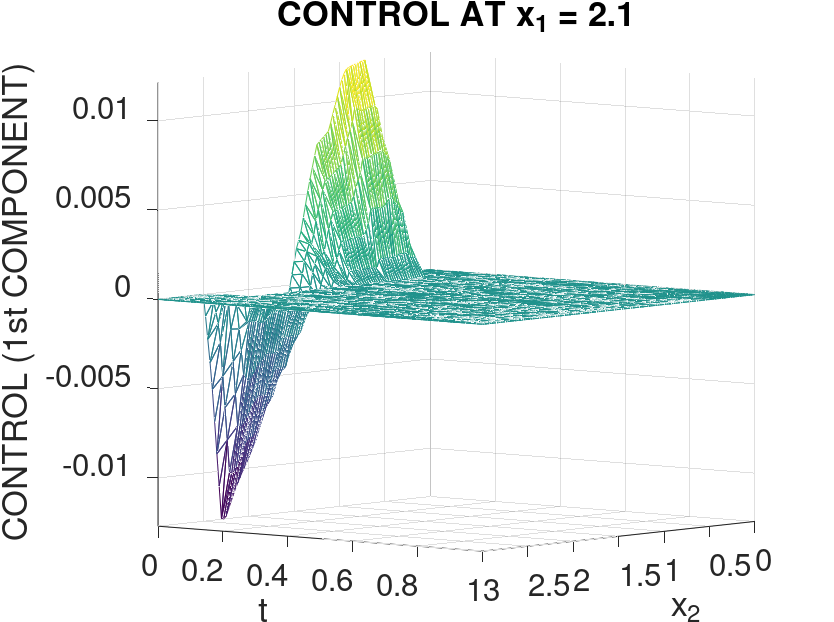}}
    \subfigure[$v_2(\cdot,2.1,\cdot)$]{\includegraphics[width=60mm, scale=0.5]{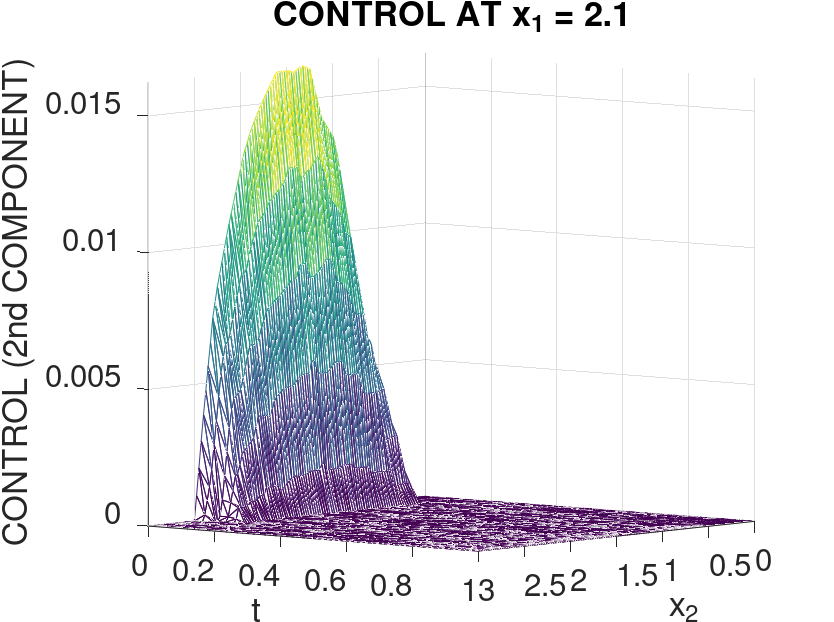}}
    \caption{Cross sections $x_1 = 2.1$ of the control components.}
    \label{fig:SecondCrossSecControl}
\end{figure}
\FloatBarrier

\begin{figure}[!ht]
    \centering
    \subfigure[$y_1(\cdot,0.9,\cdot)$]{\includegraphics[width=60mm, scale=0.5]{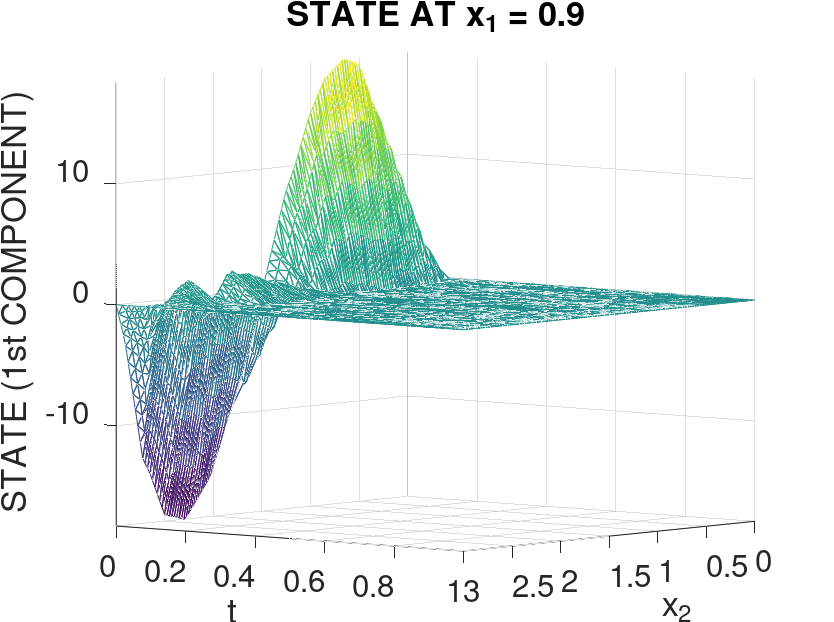}}
    \subfigure[$y_2(\cdot,0.9,\cdot)$]{\includegraphics[width=60mm, scale=0.5]{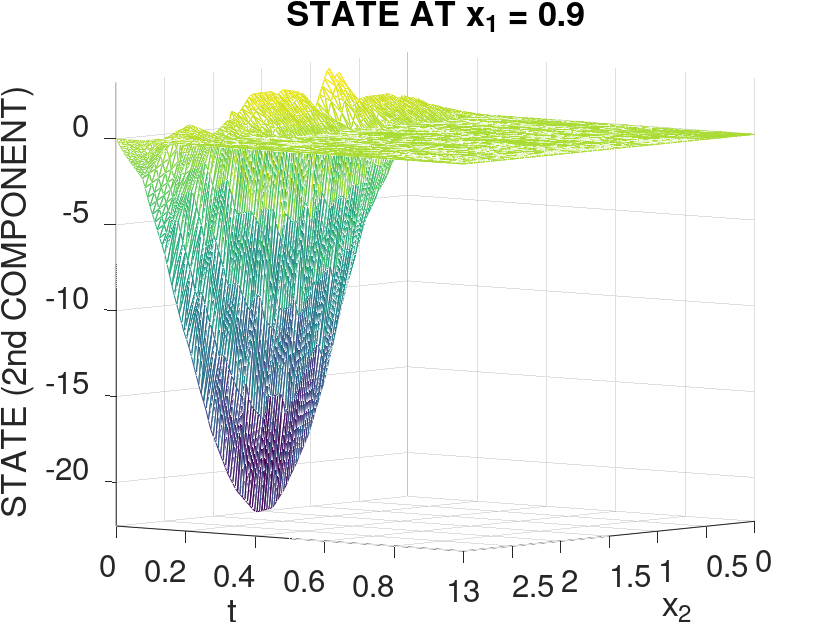}}
    \caption{Cross sections $x_1 = 0.9$ of the state components.}
    \label{fig:FirstCrossSecState}
\end{figure}
\FloatBarrier

\begin{figure}[!ht]
    \centering
    \subfigure[$y_1(\cdot,0.8,\cdot)$]{\includegraphics[width=60mm, scale=0.5]{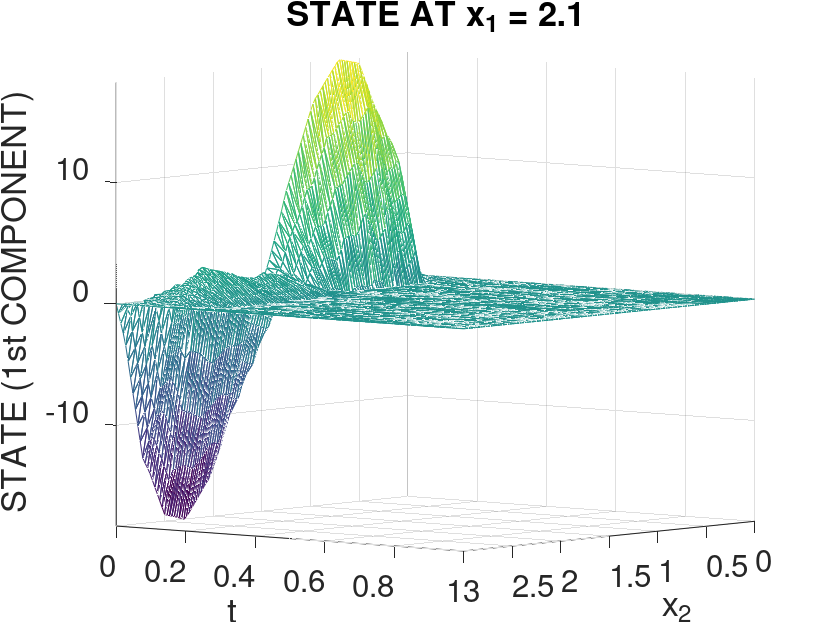}}
    \subfigure[$y_2(\cdot,0.8,\cdot)$]{\includegraphics[width=60mm, scale=0.5]{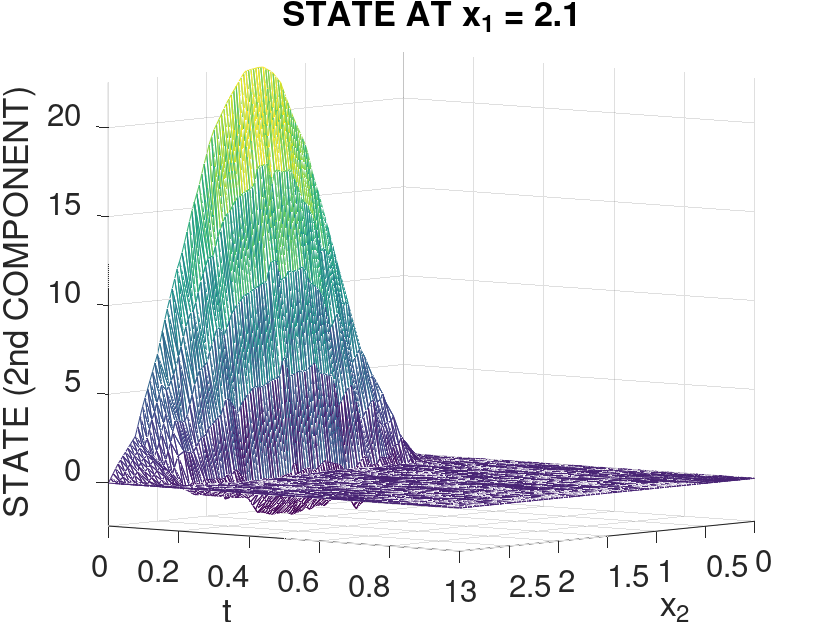}}
    \caption{Cross sections $x_1 = 2.1$ of the state components.}
    \label{fig:SecondCrossSecState}
\end{figure}
\FloatBarrier

The time evolution of the norms of the control and of the corresponding state is what we illustrate in Figure \ref{fig:timeEvoNorms}. It corroborates our theoretical findings as these norms decay exponentially. To further illustrate the control, we provide a surface of its values at initial time in Figure \ref{fig:initControl}. Then, we give some insight into the dynamics of the problem by showcasing some heat maps of the control and of its corresponding state. Namely, in Figure \ref{fig:ControlHMs}, we illustrate the control at time $t=0.15$ --- it is already considerably small, as we would expect from Figure \ref{fig:timeEvoNorms}. For several times, viz., for each $t\in \left\{0.15,\,0.25,\,0.35,\, 0.45\right\},$ we give a heat map of the first (respectively, second) component of the velocity field in Figure \ref{fig:State1HMs} (respectively, Figure \ref{fig:State2HMs}).

\begin{figure}[!ht]
    \centering
    \subfigure{\includegraphics[width=60mm, scale=0.5]{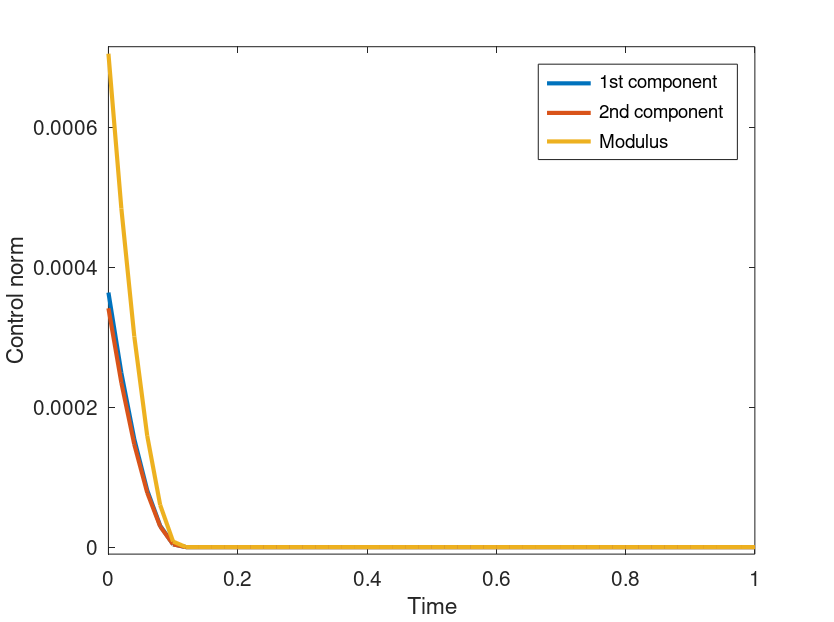}}
    \subfigure{\includegraphics[width=60mm, scale=0.5]{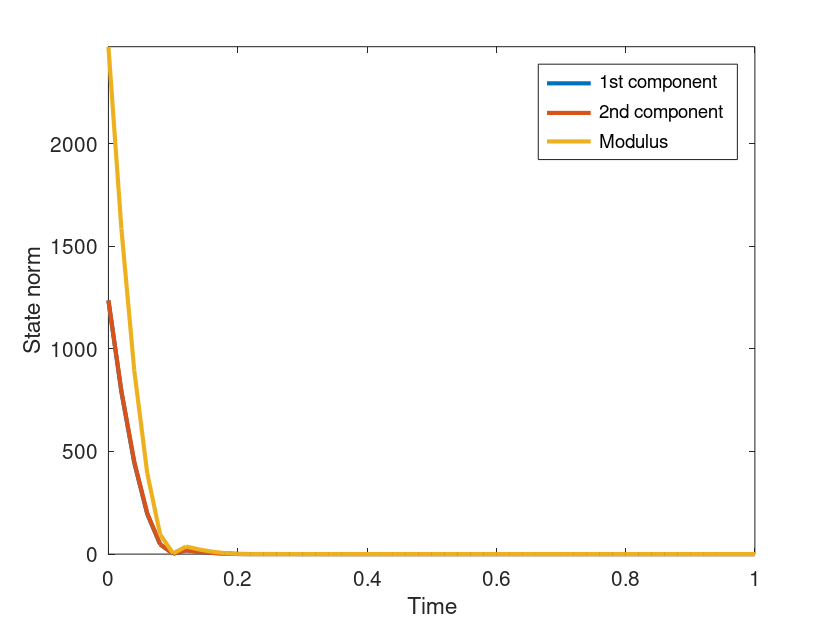}}
    \caption{Time evolution of the control (left panel) and state (right panel) norms. We referred to the sum of the norms of the components as ``Modulus''.}
    \label{fig:timeEvoNorms}
\end{figure}
\FloatBarrier

\begin{figure}[!ht]
    \centering
    \subfigure[$v_1(0)$]{\includegraphics[width=60mm, scale=0.5]{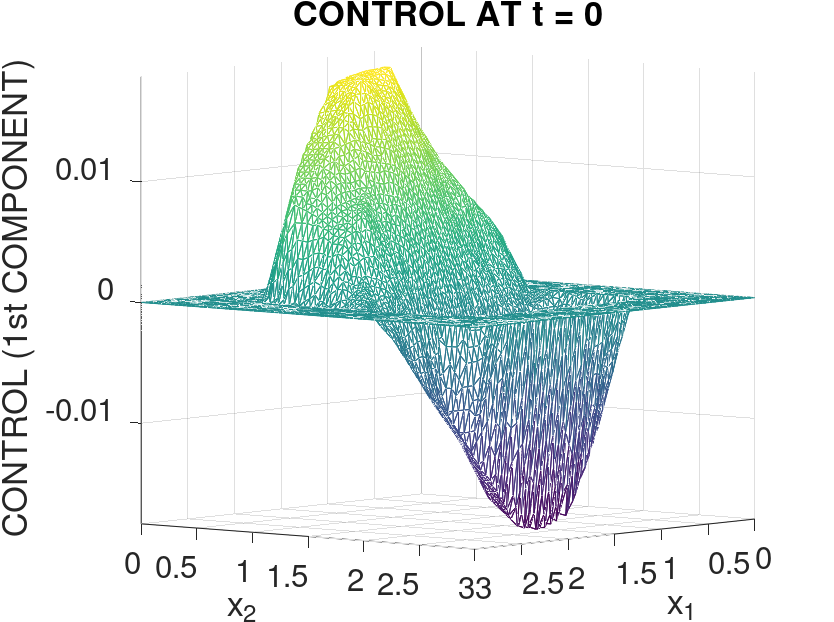}}
    \subfigure[$v_2(0)$]{\includegraphics[width=60mm, scale=0.5]{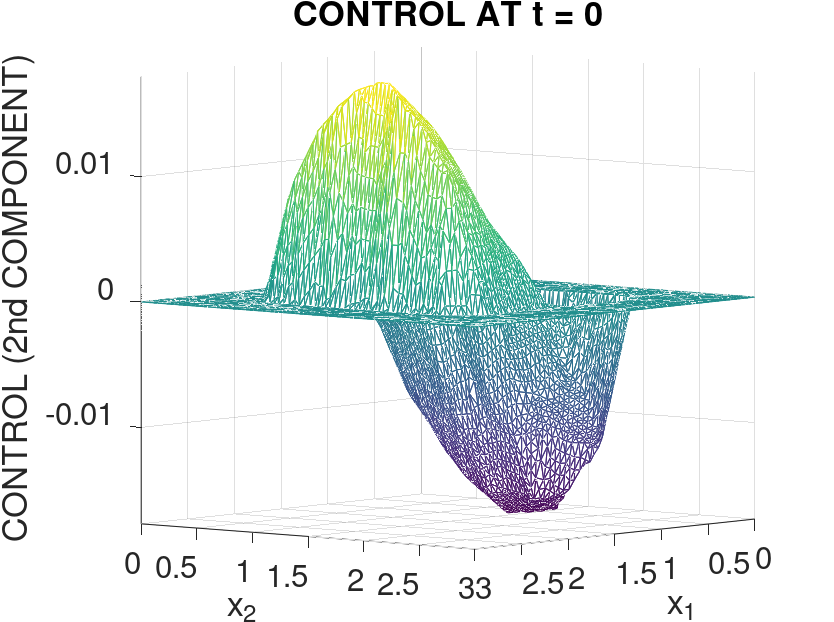}}
    \caption{The control components at time zero.}
    \label{fig:initControl}
\end{figure}
\FloatBarrier

\begin{figure}[!ht]
    \centering
    \subfigure[$v_1(0.15)$]{\includegraphics[scale=0.4]{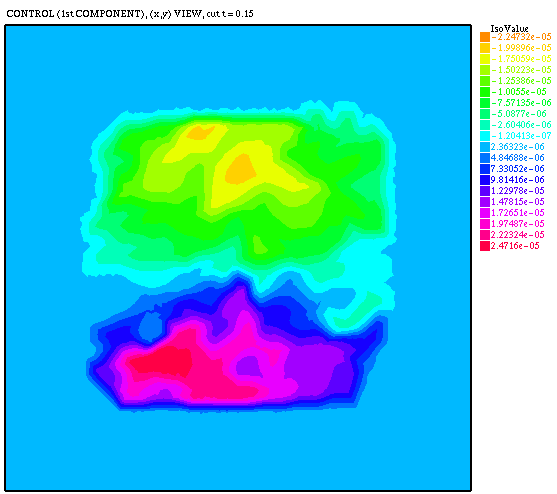}}
    \subfigure[$v_2(0.15)$]{\includegraphics[scale=0.4]{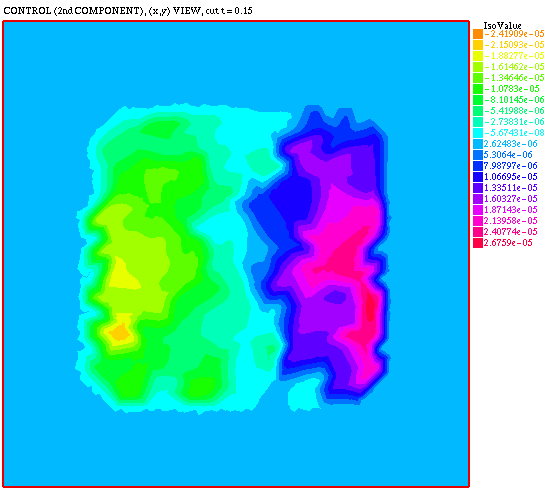}}
    \caption{Heat maps of the control components at $t= 0.15.$}
    \label{fig:ControlHMs}
\end{figure}
\FloatBarrier

\begin{figure}[!ht]
    \centering
    \subfigure[$y_1(0.15)$]{\includegraphics[scale=0.4]{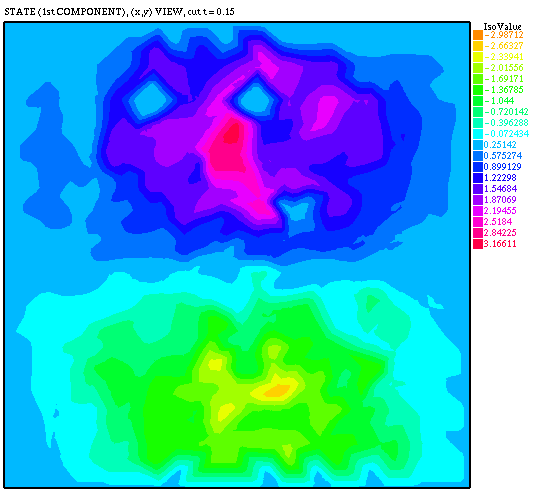}}
    \subfigure[$y_1(0.25)$]{\includegraphics[scale=0.4]{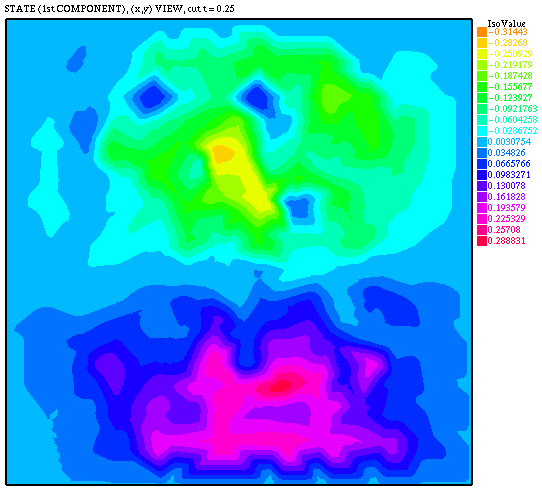}}
    \subfigure[$y_1(0.35)$]{\includegraphics[scale=0.4]{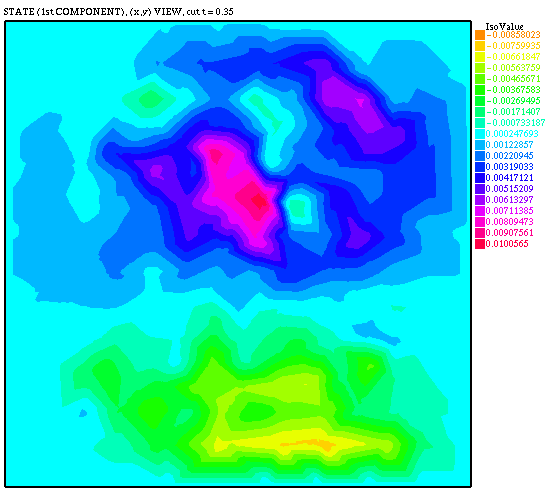}}
    \subfigure[$y_1(0.45)$]{\includegraphics[scale=0.4]{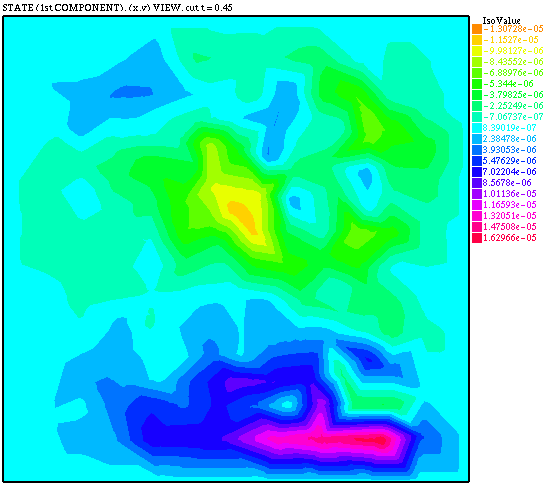}}
    \caption{Heat maps of the first state component at diverse times.}
    \label{fig:State1HMs}
\end{figure}
\FloatBarrier

\begin{figure}[!ht]
    \centering
    \subfigure[$y_2(0.15)$]{\includegraphics[scale=0.4]{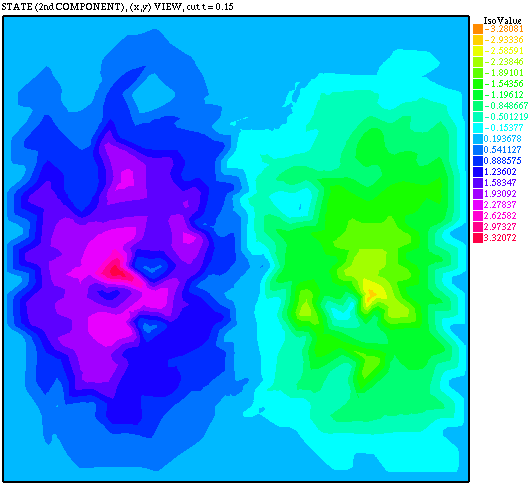}}
    \subfigure[$y_2(0.25)$]{\includegraphics[scale=0.4]{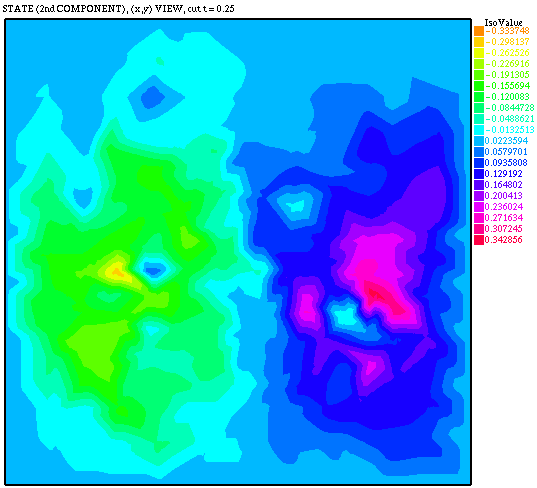}}
    \subfigure[$y_2(0.35)$]{\includegraphics[scale=0.4]{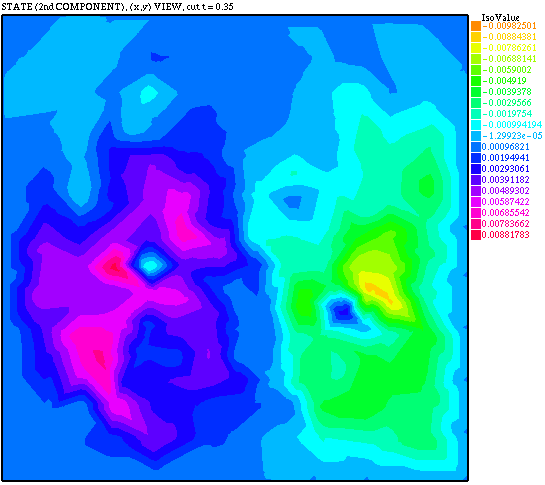}}
    \subfigure[$y_2(0.45)$]{\includegraphics[scale=0.4]{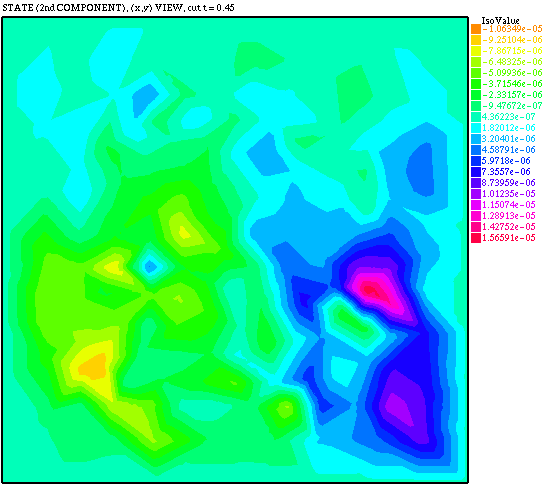}}
    \caption{Heat maps of the second state component at diverse times.}
    \label{fig:State2HMs}
\end{figure}
\FloatBarrier

\section{Comments and perspectives} \label{Comments}

\subsection{On the constitutive law for the shear stress}

Upon scrutinizing the proof of Lemmas \ref{HIsWellDefined} and \ref{HIsC1}, we conclude that they still hold for any function $\nu : \mathbb{R}^{N\times N} \rightarrow \mathbb{R}$ in (\ref{StressTensor}) having the following properties:
\begin{itemize}
    \item $\nu \geqslant \nu_0,$ for some constant $\nu_0>0;$
    \item $\nu$ is of class $C^3\left( \mathbb{R}^{N\times N} \backslash \left\{ 0 \right\} \right);$
    \item There exists $r>0$ such that
    $$
    |D^k \nu(A)|\leqslant C\left(1 + |A|^{(r-k)^{+}} \right),
    $$
    for $k=0,1,2,3,$ and for every $A \in \mathbb{R}^{N\times N}\backslash\left\{ 0\right\}.$
\end{itemize}
With Lemmas \ref{HIsWellDefined} and \ref{HIsC1} at hand, we can follow the remaining steps towards the main result, i.e., Theorem \ref{MainThm}, in the same manner as we proceeded in Section \ref{sec3}. This more general class of constitutive laws includes the one determining the reference model of this paper, namely, $\nu(A) := \nu_0 + \nu_1|A|^r,$ when $r\in \left\{ 1, 2 \right\}$ or $r\geqslant 3.$ An example of another class of functions $\nu$ for which the properties we stated above hold are
$$
\nu(A) := \nu_0 \left(1 + \nu_1 |A|^2 \right)^{r/2},\, r \in \left\{1,2\right\}\cup \left[3,\infty\right[.
$$

\subsection{On the use of the gradient instead of the deformation tensor}

We can replace the gradient of the velocity field in (\ref{StressTensor}) with the deformation tensor, $Dy = \left( \nabla y + \nabla y^T\right)/2,$ without losing any of the results we established. From a practical viewpoint, this form of the model is more realistic. Analyzing the estimates we carried out throughout the present work, it is easy to see that the techniques we employed work just as well under this substitution. In particular, we notice the new framework shares the linearization around the zero trajectory with the one we studied in Section \ref{sec2}. Using the estimates developed there, alongside Korn-type inequalities, we can prove all of the corresponding results in Sections \ref{sec3} and \ref{sec4} for this alternate version of the model \eqref{Model}-\eqref{ConstitutiveLaw}. 

\subsection{On extensions of Theorem \ref{MainThm} and some related open questions}

\textbf{Boundary controllability.} We remark that a corresponding boundary local null controllability result follows from Theorem \ref{MainThm}. In effect, let us assume that the initial data $y_0$ belongs to $H^5_0(\Omega)\cap V,$ being sufficiently small in the (strong) topology of this space, and that we act with a control on a smooth portion $\gamma$ of the boundary $\partial \Omega$ (with $\gamma \neq \partial \Omega$ and $\gamma \neq \emptyset$). We can employ standard geometrical arguments to extend $\Omega$ to an open region $\widehat{\Omega},$ with a smooth boundary $\partial\widehat{\Omega},$ and in a way that $\partial \Omega \backslash \gamma \subset \partial \widehat{\Omega}.$ Acting distributively over $\omega:= \widehat{\Omega} \backslash \overline{\Omega},$ with $y_0$ extended to zero outside of $\Omega,$ we obtain a control $\widehat{v} \in L^2(\left]0,T\right[ \times \omega)$ driving the corresponding state $\widehat{y}$ to zero at time $T.$ A boundary control for the original problem is $\widehat{y}|_{\left[0,T\right]\times \gamma}.$

\textbf{Local controllability to trajectories.} Regarding the local exact controllability to trajectories, there are two key aspects to investigate. Firstly, we must prove a global Carleman inequality, analogous to Proposition \ref{Carleman}, but for the adjoint system of the linearization around the given trajectory, cf. Lemma \ref{HIsC1}. Secondly, we have to extend the estimates of Section \ref{sec2} for this linearized problem. These endeavors are not straightforward, whence we leave this question open for future investigations.

\textbf{On the restrictions on the exponent $r.$} We notice that the estimates of Section \ref{sec3} are not immediately extensible for the values of $r > 0$ outside of $\{1,2\}\cup \left[3,\infty\right[.$ However, we conjecture that our main result (viz., Theorem \ref{MainThm}) is still in force for these values of $r.$ A possible way to establish this is to parametrically regularize the function $\nu$ around zero, and attentively keep track of the regularization parameters throughout the estimates. We leave this question open here. 

\textbf{Requirements on the initial datum.} Through another regularization argument, we possibly could require a less restrictive topology for the initial datum in the main theorem. Namely, if we assume $y_0 \in H$ only, we ought to carry out estimates for the uncontrolled problem (corresponding to \eqref{Model} with $v\equiv 0$) to show that there exists $t_0 \in \left]0,T\right[$ for which $\|y(t_0,\cdot)\|_{H^5(\Omega)^N\cap V} \leqslant \eta,$ as long as $\|y_0\|_{H}$ is sufficiently small. We choose not to delve in the technicalities of these estimates here (see \cite[Lemma 5]{coron2016small} for the application of such an argument in the case of the Navier-Stokes equations with the Navier boundary condition). However, we emphasize that this is a non-trivial task. Thus, assuming this is valid, Theorem \ref{MainThm} asserts that there exists a control $v \in L^2(\left]t_0,T\right[\times \omega)$ driving $y(t_0,\cdot)$ to zero at time $T.$ From the exponential decay of solutions, see \cite[Theorem 4.5]{malek1995existence}, this argument immediately provides a large-time global null controllability result.

\textbf{Remarks on other boundary conditions.} We observe that, if instead of no-slip boundary conditions, we assume Navier boundary conditions, the method of \cite{coron2016small}, used for the Navier-Stokes equations, may apply to the current model. If we manage to deal with the additional terms figuring in the expansions we must make after an appropriate time rescaling, especially the boundary layers, we should obtain a small-time global exact controllability to trajectories result (under Navier boundary conditions). Alternatively, if we consider the model \eqref{Model}-\eqref{ConstitutiveLaw} with $\Omega = \mathbb{T}$ (the $N-$dimensional torus) and periodic boundary conditions, then we can easily conduct the regularizing argument for the initial datum we outlined above, whence we can prove large-time global null controllability for this model --- we omit the details here.

\textbf{Stabilization results.} It might be that, for $\nu_1 > 0,$ an appropriate use of the stabilizing effect of the power-law model makes it easier to establish stabilization results for this class of non-Newtonian fluids. In this way, we propose that our current contributions could bridge such results with global null controllability ones. We remark that, even for the Navier-Stokes equations (corresponding to $\nu_1 = 0$) under no-slip boundary conditions, whether global null controllability holds is an open problem. We suggest that such results for \eqref{Model}-\eqref{ConstitutiveLaw} (with $\nu_1 > 0$) could provide insight on this important open question.


\bibliographystyle{apalike}
\bibliography{References}

\begin{thebibliography}{}

\bibitem[Alekseev et~al., 1987]{alekseev1987optimal}
Alekseev, V., Tikhomirov, V., and Fomin, S. (1987).
\newblock {\em Optimal Control}.
\newblock Springer Science \& Business Media.

\bibitem[Beir{\~a}o~da Veiga, 2005]{beirao2005regularity}
Beir{\~a}o~da Veiga, H. (2005).
\newblock On the regularity of flows with ladyzhenskaya shear-dependent
  viscosity and slip or non-slip boundary conditions.
\newblock {\em Communications on Pure and Applied Mathematics: A Journal Issued
  by the Courant Institute of Mathematical Sciences}, 58(4):552--577.

\bibitem[Boyer and Fabrie, 2012]{boyer2012mathematical}
Boyer, F. and Fabrie, P. (2012).
\newblock {\em Mathematical Tools for the Study of the Incompressible
  Navier-Stokes Equations andRelated Models}, volume 183.
\newblock Springer Science \& Business Media.

\bibitem[Carreau, 1968]{carreau1968rheological}
Carreau, P. (1968).
\newblock {\em Rheological equations of state from molecular network theories}.
\newblock PhD thesis, PhD thesis, University of Wisconsin, Madison.

\bibitem[Carreno and Guerrero, 2013]{carreno2013local}
Carreno, N. and Guerrero, S. (2013).
\newblock Local null controllability of the n-dimensional navier--stokes system
  with n- 1 scalar controls in an arbitrary control domain.
\newblock {\em Journal of Mathematical Fluid Mechanics}, 15(1):139--153.

\bibitem[Chaves-Silva and Guerrero, 2015]{chaves2015uniform}
Chaves-Silva, F.~W. and Guerrero, S. (2015).
\newblock A uniform controllability result for the keller--segel system.
\newblock {\em Asymptotic Analysis}, 92(3-4):313--338.

\bibitem[Cho and Kensey, 1989]{cho1989effects}
Cho, Y. and Kensey, K. (1989).
\newblock {\em Effects of the non-Newtonian viscosity of blood on hemodynamics
  of diiseased arterial flows}, volume~1.
\newblock Prat.

\bibitem[Cho and Kensey, 1991]{cho1991effects}
Cho, Y.~I. and Kensey, K.~R. (1991).
\newblock Effects of the non-newtonian viscosity of blood on flows in a
  diseased arterial vessel. part 1: Steady flows.
\newblock {\em Biorheology}, 28(3-4):241--262.

\bibitem[Christiansen and Kelsey, 1973]{christiansen1973isothermal}
Christiansen, E. and Kelsey, S. (1973).
\newblock Isothermal and nonisothermal, laminar, inelastic, non-newtonian
  tube-entrance flow following a contraction.
\newblock {\em Chemical Engineering Science}, 28(4):1099--1113.

\bibitem[Cokelet et~al., 1963]{cokelet1963rheology}
Cokelet, G.~R., Merrill, E., Gilliland, E., Shin, H., Britten, A., and
  Wells~Jr, R. (1963).
\newblock The rheology of human blood—measurement near and at zero shear
  rate.
\newblock {\em Transactions of the Society of Rheology}, 7(1):303--317.

\bibitem[Coron and Guerrero, 2009]{coron2009null}
Coron, J.-M. and Guerrero, S. (2009).
\newblock Null controllability of the n-dimensional stokes system with n- 1
  scalar controls.
\newblock {\em Journal of Differential Equations}, 246(7):2908--2921.

\bibitem[Coron and Lissy, 2014]{coron2014local}
Coron, J.-M. and Lissy, P. (2014).
\newblock Local null controllability of the three-dimensional navier--stokes
  system with a distributed control having two vanishing components.
\newblock {\em Inventiones mathematicae}, 198(3):833--880.

\bibitem[Coron et~al., 2016]{coron2016small}
Coron, J.-M., Marbach, F., and Sueur, F. (2016).
\newblock Small-time global exact controllability of the navier-stokes equation
  with navier slip-with-friction boundary conditions.
\newblock {\em arXiv preprint arXiv:1612.08087}.

\bibitem[Cross, 1965]{cross1965rheology}
Cross, M.~M. (1965).
\newblock Rheology of non-newtonian fluids: a new flow equation for
  pseudoplastic systems.
\newblock {\em Journal of colloid science}, 20(5):417--437.

\bibitem[Davies et~al., 1990]{davies1990effects}
Davies, P., Mazher, A., Giddens, D., Zarins, C., and Glagov, S. (1990).
\newblock Effects of nonnewtonian fluid behavior on wall shear in a separated
  flow region.
\newblock In {\em Proc. 1st World Conf. of Biomech}, volume~1, page 301.

\bibitem[Du and Gunzburger, 1991]{du1991analysis}
Du, Q. and Gunzburger, M.~D. (1991).
\newblock Analysis of a ladyzhenskaya model for incompressible viscous flow.
\newblock {\em Journal of Mathematical Analysis and Applications},
  155(1):21--45.

\bibitem[el~Gibaly et~al., 2016]{el2016effects}
el~Gibaly, A., El-Bassiouny, O.~A., Diaa, O., Shehata, A.~I., Hassan, T., and
  Saqr, K.~M. (2016).
\newblock Effects of non-newtonian viscosity on the hemodynamics of cerebral
  aneurysms.
\newblock In {\em Applied Mechanics and Materials}, volume 819, pages 366--370.
  Trans Tech Publ.

\bibitem[Evans, 2010]{evans10}
Evans, L.~C. (2010).
\newblock {\em Partial differential equations}.
\newblock American Mathematical Society, Providence, R.I.

\bibitem[Fern{\'a}ndez-Cara et~al., 2004]{fernandez2004local}
Fern{\'a}ndez-Cara, E., Guerrero, S., Imanuvilov, O.~Y., and Puel, J.-P.
  (2004).
\newblock Local exact controllability of the navier--stokes system.
\newblock {\em Journal de math{\'e}matiques pures et appliqu{\'e}es},
  83(12):1501--1542.

\bibitem[Fern{\'a}ndez-Cara et~al., 2006]{fernandez2006some}
Fern{\'a}ndez-Cara, E., Guerrero, S., Imanuvilov, O.~Y., and Puel, J.-P.
  (2006).
\newblock Some controllability results forthe n-dimensional navier--stokes and
  boussinesq systems with n-1 scalar controls.
\newblock {\em SIAM journal on control and optimization}, 45(1):146--173.

\bibitem[Fern{\'a}ndez-Cara et~al., 2015]{fernandez2015theoretical}
Fern{\'a}ndez-Cara, E., Limaco, J., and de~Menezes, S. (2015).
\newblock Theoretical and numerical local null controllability of a
  ladyzhenskaya--smagorinsky model of turbulence.
\newblock {\em Journal of Mathematical Fluid Mechanics}, 17(4):669--698.

\bibitem[Fern{\'a}ndez-Cara et~al., 2017a]{fernandez2017numerical}
Fern{\'a}ndez-Cara, E., M{\"u}nch, A., and Souza, D.~A. (2017a).
\newblock On the numerical controllability of the two-dimensional heat, stokes
  and navier--stokes equations.
\newblock {\em Journal of Scientific Computing}, 70(2):819--858.

\bibitem[Fern{\'a}ndez-Cara et~al., 2017b]{fernandez2017theoretical}
Fern{\'a}ndez-Cara, E., Nina-Huam{\'a}n, D., Nu{\~n}ez-Ch{\'a}vez, M.~R., and
  Vieira, F.~B. (2017b).
\newblock On the theoretical and numerical control of a one-dimensional
  nonlinear parabolic partial differential equation.
\newblock {\em Journal of Optimization Theory and Applications},
  175(3):652--682.

\bibitem[Fursikov and Imanuvilov, 1996]{fursikov1996controllability}
Fursikov, A.~V. and Imanuvilov, O.~Y. (1996).
\newblock {\em Controllability of evolution equations}.
\newblock Number~34. Seoul National University.

\bibitem[Kjartanson et~al., 1988]{kjartanson1988creep}
Kjartanson, B., Shields, D., Domaschuk, L., and Man, C.-S. (1988).
\newblock The creep of ice measured with the pressuremeter.
\newblock {\em Canadian Geotechnical Journal}, 25(2):250--261.

\bibitem[Ladyzhenskaya, 1970a]{ladyzhenskaya1970modification}
Ladyzhenskaya, O. (1970a).
\newblock Modification of the navier--stokes equations for large velocity
  gradients.
\newblock In {\em Seminars in Mathematics VA Stheklov Mathematical Institute},
  volume~7.

\bibitem[Ladyzhenskaya, 1970b]{ladyzhenskaya1970new}
Ladyzhenskaya, O. (1970b).
\newblock New equations for the description of the viscoue incompressible
  fluids and solvability in the large of the boundary value problems for them.
\newblock {\em Boundary Value Problem of Mathematical Physics V, Amer. Math.
  Soc.: Providence}.

\bibitem[Ladyzhenskaya, 1969]{ladyzhenskaya1969mathematical}
Ladyzhenskaya, O.~A. (1969).
\newblock {\em The mathematical theory of viscous incompressible flow},
  volume~2.
\newblock Gordon and Breach New York.

\bibitem[Layton, 2016]{layton2016energy}
Layton, W. (2016).
\newblock Energy dissipation in the smagorinsky model of turbulence.
\newblock {\em Applied Mathematics Letters}, 59:56--59.

\bibitem[Lesieur, 1987]{lesieur1987turbulence}
Lesieur, M. (1987).
\newblock {\em Turbulence in fluids: stochastic and numerical modelling}.
\newblock Nijhoff Boston, MA.

\bibitem[Lions, 1969]{lions1969quelques}
Lions, J.~L. (1969).
\newblock Quelques m{\'e}thodes de r{\'e}solution des problemes aux limites non
  lin{\'e}aires.

\bibitem[Liu and Zhang, 2012]{liu2012local}
Liu, X. and Zhang, X. (2012).
\newblock Local controllability of multidimensional quasi-linear parabolic
  equations.
\newblock {\em SIAM Journal on Control and Optimization}, 50(4):2046--2064.

\bibitem[M{\'a}lek et~al., 1993]{malek1993non}
M{\'a}lek, J., Ne{\v{c}}as, J., and R{\u{u}}{\v{z}}i{\v{c}}ka, M. (1993).
\newblock On the non-newtonian incompressible fluids.
\newblock {\em Mathematical models and methods in applied sciences},
  3(01):35--63.

\bibitem[M{\'a}lek et~al., 2001]{malek2001weak}
M{\'a}lek, J., Necas, J., Ruzicka, M., et~al. (2001).
\newblock On weak solutions to a class of non-newtonian incompressible fluids
  in bounded three-dimensional domains: The case $ p \geq 2$.
\newblock {\em Advances in Differential Equations}, 6(3):257--302.

\bibitem[M{\'a}lek et~al., 1995]{malek1995existence}
M{\'a}lek, J., Rajagopal, K.~R., and R{\u{u}}{\v{z}}i{\v{c}}ka, M. (1995).
\newblock Existence and regularity of solutions and the stability of the rest
  state for fluids with shear dependent viscosity.
\newblock {\em Mathematical Models and Methods in Applied Sciences},
  5(06):789--812.

\bibitem[Malevsky and Yuen, 1992]{malevsky1992strongly}
Malevsky, A.~V. and Yuen, D.~A. (1992).
\newblock Strongly chaotic non-newtonian mantle convection.
\newblock {\em Geophysical \& Astrophysical Fluid Dynamics}, 65(1-4):149--171.

\bibitem[Metzner, 1956]{metzner1956non}
Metzner, A. (1956).
\newblock Non-newtonian technology: fluid mechanics, mixing, and heat transfer.
\newblock In {\em Advances in chemical engineering}, volume~1, pages 77--153.
  Elsevier.

\bibitem[Micu and Takahashi, 2018]{micu2018local}
Micu, S. and Takahashi, T. (2018).
\newblock Local controllability to stationary trajectories of a burgers
  equation with nonlocal viscosity.
\newblock {\em Journal of Differential Equations}, 264(5):3664--3703.

\bibitem[Nakamura and Sawada, 1988]{nakamura1988numerical}
Nakamura, M. and Sawada, T. (1988).
\newblock Numerical study on the flow of a non-newtonian fluid through an
  axisymmetric stenosis.
\newblock {\em Journal of Biomechanical Engineering}, 110:137--143.

\bibitem[Pokorn{\`y}, 1996]{pokorny1996cauchy}
Pokorn{\`y}, M. (1996).
\newblock Cauchy problem for the non-newtonian viscous incompressible fluid.
\newblock {\em Applications of Mathematics}, 41(3):169--201.

\bibitem[Powell and Eyring, 1944]{powell1944mechanisms}
Powell, R.~E. and Eyring, H. (1944).
\newblock Mechanisms for the relaxation theory of viscosity.
\newblock {\em Nature}, 154(3909):427--428.

\bibitem[Prandtl, 1952]{prandtl1952guide}
Prandtl, L. (1952).
\newblock Guide {\`a} travers la m{\'e}canique des fluides.

\bibitem[Rebollo and Lewandowski, 2014]{rebollo2014mathematical}
Rebollo, T.~C. and Lewandowski, R. (2014).
\newblock {\em Mathematical and numerical foundations of turbulence models and
  applications}.
\newblock Springer.

\bibitem[Ree et~al., 1958]{ree1958relaxation}
Ree, F., Ree, T., and Eyring, H. (1958).
\newblock Relaxation theory of transport problems in condensed systems.
\newblock {\em Industrial \& Engineering Chemistry}, 50(7):1036--1040.

\bibitem[Ruzicka, 2000]{ruzicka2000electrorheological}
Ruzicka, M. (2000).
\newblock {\em Electrorheological fluids: modeling and mathematical theory}.
\newblock Springer Science \& Business Media.

\bibitem[Smagorinsky, 1963]{smagorinsky1963general}
Smagorinsky, J. (1963).
\newblock General circulation experiments with the primitive equations: I. the
  basic experiment.
\newblock {\em Monthly weather review}, 91(3):99--164.

\bibitem[Steffan et~al., 1990]{steffan1990comparison}
Steffan, H., Brandst{\"a}tter, W., Bachler, G., and Pucher, R. (1990).
\newblock Comparison of newtonian and non-newtonian blood flow in stenotic
  vessels using numerical simulation.
\newblock In {\em Biofluid Mechanics}, pages 479--485. Springer.

\bibitem[Sutterby, 1966]{sutterby1966laminar}
Sutterby, J. (1966).
\newblock Laminar converging flow of dilute polymer solutions in conical
  sections: Part i. viscosity data, new viscosity model, tube flow solution.
\newblock {\em AIChE Journal}, 12(1):63--68.

\bibitem[Sutterby, 1965]{sutterby1965laminar}
Sutterby, J.~L. (1965).
\newblock Laminar converging flow of dilute polymer solutions in conical
  sections. ii.
\newblock {\em Transactions of the Society of Rheology}, 9(2):227--241.

\bibitem[Tartar, 2006]{tartar2006introduction}
Tartar, L. (2006).
\newblock {\em An introduction to Navier-Stokes equation and oceanography},
  volume~1.
\newblock Springer.

\bibitem[Temam and Chorin, 1978]{temam1978navier}
Temam, R. and Chorin, A. (1978).
\newblock {\em Navier Stokes equations: Theory and numerical analysis}.
\newblock American Society of Mechanical Engineers Digital Collection.

\bibitem[Turian, 1969]{turian1969critical}
Turian, R.~M. (1969).
\newblock The critical stress in frictionally heated non-newtonian plane
  couette flow.
\newblock {\em Chemical Engineering Science}, 24(10):1581--1587.

\bibitem[Van Der~Veen and Whillans, 1990]{van1990new}
Van Der~Veen, C. and Whillans, I. (1990).
\newblock New and improved determinations of the velocity of ice stream-b and
  ice stream-c.
\newblock {\em West Antartica J. Glaciology}, 36:324--339.

\end{thebibliography}


\end{document}